\documentclass[11pt]{amsart}

\usepackage{graphicx,amsthm}
\usepackage{enumerate}
\usepackage{empheq}
\usepackage{amssymb}
\usepackage{array}
\usepackage{tikz}
\usepackage{tikz-cd}
\usepackage{amsmath,amscd,amsfonts,bbm, mathabx}
\allowdisplaybreaks[4]
\usepackage{ragged2e}
\usepackage{a4wide}
\usepackage[all]{xy}
\usepackage{hyperref}


\usepackage{diagbox}
\usepackage{url}

\usepackage[mathscr]{euscript} 
\usepackage{booktabs} 
\usetikzlibrary{
	3d, matrix,decorations.pathreplacing,calc,decorations.pathmorphing}
\usetikzlibrary{decorations.markings}
\usetikzlibrary{cd,arrows}
\usetikzlibrary{arrows.meta}
\usetikzlibrary {positioning}
\usetikzlibrary{backgrounds,scopes}

\usepackage{colonequals}

\usepackage[normalem]{ulem} 

\usepackage{caption}
\usepackage{subcaption}
\captionsetup[subfigure]{labelformat=simple}



\newtheorem{theorem}{Theorem}[section]
\newtheorem{lemma}[theorem]{Lemma}
\newtheorem{proposition}[theorem]{Proposition}
\newtheorem{corollary}[theorem]{Corollary}

\newtheorem{question}[theorem]{Question}

\theoremstyle{definition}
\newtheorem{definition}[theorem]{Definition}
\newtheorem{example}[theorem]{Example}

\newtheorem{remark}[theorem]{Remark}

\numberwithin{equation}{section}

\newtheorem{thmy}{Theorem}


\newcommand{\C}{\mathbb{C}}
\newcommand{\Cstar}{\mathbb{C}^{\ast}}
\newcommand{\flag}{F\ell}
\renewcommand{\S}{\mathfrak{S}}

\DeclareMathOperator{\GL}{GL}

\DeclareMathOperator{\Hess}{Hess}

\newcommand{\Gwh}[1]{\Gamma(\Ow{{#1}} \cap \Hess(s,h))}
\newcommand{\Owh}[1]{\Omega_{{#1},h}}
\newcommand{\Ow}[1]{\Omega_{{#1}}}
\newcommand{\Owo}[1]{\Omega_{{#1}}^{\circ}}
\newcommand{\OwoY}[1]{\Omega_{{#1},Y}^{\circ}}
\newcommand{\OwY}[1]{\Omega_{{#1},Y}}

\DeclareMathOperator{\Ad}{Ad}

\newcommand{\hpat}[1]{{#1}_h}
\allowdisplaybreaks

\begin{document}
\title[Intersections of Schubert varieties and smooth $T$-stable subvarieties of $G/B$]{
Intersections of Schubert varieties and smooth $T$-stable subvarieties of flag varieties
}

\author{Jaehyun Hong}
\address{Center for Complex Geometry, Institute for Basic Science (IBS), Daejeon 34126, Republic of Korea}
\email{jhhong00@ibs.re.kr}

\author{Eunjeong Lee}
\address{Department of Mathematics,
	Chungbuk National University,
	Cheongju 28644, Republic of Korea}
\email{eunjeong.lee@chungbuk.ac.kr}

\author{Seonjeong Park}
\address{Department of Mathematics Education, Jeonju University, Jeonju 55069, Republic of Korea}
\email{seonjeongpark@jj.ac.kr}

\date{\today}

\keywords{Hessenberg varieties, Schubert varieties, Bia{\l}ynicki-Birula decompositions, GKM varieties}

\subjclass[2020]{Primary 14M15; Secondary 05E14, 14L30}

\begin{abstract}
A smooth projective variety with an action of a torus admits a cell decomposition, called the Bia{\l}ynicki-Birula decomposition. Singularities of the closures of these cells are not well-known. One of the examples of such closures is a Schubert variety in a flag variety~$G/B$, and there are several criteria for the smoothness of Schubert varieties.  In this paper, we focus on the closures of Bia{\l}ynicki-Birula cells in regular semisimple Hessenberg varieties $\Hess(s,h)$, called Hessenberg Schubert varieties.
We first consider the intersection of the Schubert varieties with $\Hess(s,h)$ and investigate the irreducibility and the smoothness of this intersection, from which we get a sufficient condition for a Hessenberg Schubert variety to be smooth.
\end{abstract}

\thanks{Hong was supported by the Institute for Basic Science IBS-R032-D1.
Lee was supported by the National Research Foundation of Korea(NRF) grant funded by the Korea government(MSIT) (No.\ RS-2023-00239947) and the POSCO Science Fellowship of the POSCO TJ Park Foundation. Park was supported by the National Research Foundation of Korea [NRF-2020R1A2C1A01011045].}

\maketitle

\setcounter{tocdepth}{1}
\tableofcontents

\section{Introduction}\label{sec:Intro}
The \emph{Schubert varieties} are subvarieties of the full flag variety $\flag(\C^n) = \GL_n(\C)/B$ induced by a Bruhat decomposition of $\flag(\C^n)$, where $B$ is a Borel subgroup of $G = \GL_n(\C)$.
The Schubert varieties form an additive basis of the cohomology ring of the flag variety and understanding the geometry of Schubert varieties plays an important role in studying that of the flag varieties. Moreover, there are interesting avenues connecting the geometry of Schubert varieties and representation theory or combinatorics as exhibited in the theory of Schubert calculus.

Not every Schubert variety is smooth, but the singularities of Schubert varieties have been widely studied. To be more precise, the Schubert varieties in $\flag(\C^n)$ are parametrized by the symmetric group $\S_n$ on $[n] \colonequals \{1,\dots,n\}$, and we denote by $\Omega_w$ the (opposite) Schubert variety indexed by a permutation $w \in \S_n$.\footnote{In the literature, both Schubert varieties and opposite Schubert varieties are considered. In this paper, we focus on the opposite Schubert varieties. For brevity, we will refer to opposite Schubert varieties simply as Schubert varieties, when no confusion arises. (See Subsection~\ref{subsection_flag} for the definition of Schubert varieties.)} Recall that there are at least two criteria determining the smoothness of $\Omega_w$: one uses the \emph{GKM subgraph} of $\Omega_w$ in that of $\flag(\C^n)$ (cf.~\cite{Carrell94}), and the other uses the pattern avoidance of $w$ (cf.~\cite{Billey98}).

In this paper, we are going to extend our attention to the family of \emph{regular semisimple Hessenberg varieties}, each of which is a smooth projective subvariety of $\flag(\C^n)$; the flag variety $\flag(\C^n)$ is contained in the family.
To define a Hessenberg variety, we need two data: one is a \emph{Hessenberg function} $h \colon [n] \to [n]$; the other is a linear operator $x \colon \C^n \to \C^n$. A function $h \colon [n] \to [n]$ is called a Hessenberg function if $h(i) \geq i$ for all $i \in [n]$ and $h(i) \leq h(i+1)$ for $i \in [n-1]$. The \emph{Hessenberg variety} $\Hess(x,h)$ is defined by the set of all flags $V_{\bullet} \in \flag(\C^n)$ satisfying $x V_i \subset V_{h(i)}$ for all $i$. When the linear operator corresponds to a regular semisimple element, say $s$, then we call $\Hess(s, h)$ a \emph{regular semisimple Hessenberg variety}.

A regular semisimple Hessenberg variety $\Hess(s, h)$ is stable under the action of a torus~$T$, the centralizer of $s$ in $G = \GL_n(\C)$, which is a maximal torus because of the choice of $s$.
Notice that the $T$-fixed points in $\Hess(s, h)$ are parametrized by $\S_n$.
Considering the Bia{\l}ynicki-Birula decomposition of $\Hess(s,h)$, we define the \emph{Hessenberg Schubert variety} $\Owh{w}$ by the closure of the intersection of $\Hess(s,h)$ with the Schubert cell $\Owo{w}$.

The main goal of this paper is to study the smoothness of a Hessenberg Schubert variety~$\Owh{w}$, and moreover, that of the intersection $\Ow{w} \cap \Hess(s,h)$ in terms of the graph~$\Gamma(\Ow{w} \cap \Hess(s,h))$, which is a certain induced subgraph of the GKM graph of $\Hess(s,h)$. Here, for a $T$-stable subvariety $Y$ of $\flag(\C^n)$, we denote by $\Gamma(Y)$ the graph whose vertices are $Y^T$ and edges are $T$-stable curves in $Y$.

We first consider the smoothness of the intersection $\Ow{w} \cap \Hess(s,h)$.
To do so, we start by considering the smoothness of the intersection of $\Omega_w$ and a smooth $T$-stable subvariety~$Y$ of~$\flag(\C^n)$.
\begin{theorem}\label{thm_main}
Let $G$ be a simply-laced simple algebraic group over $\mathbb{C}$, $T$ a maximal torus, and $B$ a Borel subgroup containing $T$. Let $X = G/B$ be a flag variety and let $Y$ be a smooth $T$-stable subvariety of $X$.
Then the following statements are equivalent.
\begin{enumerate}
    \item $\Gamma(\Omega_w \cap Y)$ is regular.
    \item $\Omega_w \cap Y$ is smooth.
\end{enumerate}
In particular, if one of the above conditions holds, then $\overline{\Omega_w^{\circ} \cap Y}$ is smooth. If, furthermore, $\Omega_w \cap Y$ is connected, then we have
\[
\Omega_w \cap Y = \overline{\Omega_w^{\circ} \cap Y}.
\]
\end{theorem}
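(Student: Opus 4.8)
The plan is to convert the global smoothness condition into a pointwise, representation‑theoretic statement at each $T$‑fixed point by means of the Bia{\l}ynicki--Birula (BB) decomposition, and then to assemble the local statements into the equivalence and the two supplementary assertions. First I would fix a generic cocharacter $\lambda\colon \mathbb{C}^{\ast}\to T$ for which the opposite Schubert cells $\Omega_v^{\circ}$ are exactly the BB cells of $X$. Since $Y$ is smooth and $T$-stable it inherits a BB decomposition with cells $Y_v:=\Omega_v^{\circ}\cap Y$, each an affine space, so that $Z:=\Omega_w\cap Y=\bigsqcup_{v\ge w}Y_v$ is a closed, $T$-stable union of BB cells with $Z^T=\{v\in Y^T:v\ge w\}$. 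I would then check that $\Gamma(Z)$ is the induced subgraph of the GKM graph $\Gamma(Y)$ on $Z^T$ (a $T$-curve of $Y$ joining two vertices of $Z^T$ has its open orbit in some cell $Y_u$ with $u\in Z^T$, hence lies in $Z$), so that $\deg_{\Gamma(Z)}(p)$ equals the number of $T$-invariant curves of $Z$ through $p$. Here $Y_w=\Omega_w^{\circ}\cap Y$ is open in $Z$ and is an affine space, whence $Z$ is smooth at $w$ with $\deg_{\Gamma(Z)}(w)=\dim_wZ$, and $\overline{Y_w}=\overline{\Omega_w^{\circ}\cap Y}$ is an irreducible component of $Z$.

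Next, at each $p\in Z^T$ I would linearize: as $Y$ is smooth and $p$ is $T$-fixed, a neighborhood of $p$ in $Y$ is $T$-equivariantly isomorphic to $W_p:=T_pY=\bigoplus_i\mathbb{C}_{\chi_i}$, and the GKM structure of $G/B$ guarantees that the weights $\chi_i$ are pairwise distinct roots. Under this isomorphism $Z$ becomes a $T$-stable closed subvariety $Z'_p\subseteq W_p$ through the origin whose $T$-curves through $p$ correspond exactly to the coordinate weight-axes $\mathbb{C}_{\chi_i}$ contained in $Z'_p$, so that
\[
\deg_{\Gamma(Z)}(p)=\#\{\,i:\mathbb{C}_{\chi_i}\subseteq Z'_p\,\}.
\]
The task is then reduced to a local criterion: $\deg_{\Gamma(Z)}(p)\ge \dim_pZ$, with equality if and only if $Z$ is smooth at $p$.

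This local criterion is the heart of the proof, and the step I expect to be the main obstacle; it is exactly where the simply-laced hypothesis enters. Since the tangent space $T_pZ'_p$ is a $T$-subrepresentation of $W_p$ with distinct weights, it is a coordinate subspace $\bigoplus_{i\in A}\mathbb{C}_{\chi_i}$, and every curve-axis lies in it, giving $\deg_{\Gamma(Z)}(p)\le|A|=\dim T_pZ'_p$. When $Z$ is smooth at $p$ I would linearize $Z'_p$ itself and write each coordinate $x_j|_{Z'_p}$ with $j\notin A$ as a weight-$\chi_j$ polynomial in the $u_i$ $(i\in A)$; because for roots of a simply-laced system one never has $\chi_j=2\chi_i$, no such polynomial can contain a pure power of a single $u_i$, which forces every $A$-axis into $Z'_p$ and yields $\deg_{\Gamma(Z)}(p)=|A|=\dim_pZ$. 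For the inequality in general I would use the BB structure (the attracting axes already account for $\dim Y_p$ of them, and the boundary cells meeting $p$ supply the rest) and show that the absence of the relation $\chi_j=2\chi_i$ rules out the degenerate tangent-cone behaviour that would otherwise drop the count below $\dim_pZ$, with strictness whenever $\dim T_pZ>\dim_pZ$; establishing this inequality via the tangent cone of $Z'_p$ is the technical crux.

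Finally I would globalize. Every irreducible component of $Z$ is the closure $\overline{Y_v}$ of the cell of a BB-source $v$, near which $Z$ equals the affine space $Y_v$ and is therefore smooth, so $\deg_{\Gamma(Z)}(v)=\dim Y_v$. If $\Gamma(Z)$ is regular of degree $r$, then every source satisfies $\dim Y_v=r$, so $Z$ is pure of dimension $r$; the local criterion then gives $\deg_{\Gamma(Z)}(p)=r=\dim_pZ$ at every vertex, whence $Z$ is smooth, proving (1)$\Rightarrow$(2). Conversely, if $Z$ is smooth the local criterion gives $\deg_{\Gamma(Z)}(p)=\dim_pZ$ everywhere, and the source analysis shows this common value is constant across $Z$, so $\Gamma(Z)$ is regular, proving (2)$\Rightarrow$(1). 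For the supplementary statements, smoothness makes the irreducible components of $Z$ disjoint and smooth, so the component $\overline{\Omega_w^{\circ}\cap Y}=\overline{Y_w}$ is smooth; and if moreover $Z$ is connected it is irreducible, so the dense open cell $\Omega_w^{\circ}\cap Y$ forces $\Omega_w\cap Y=\overline{\Omega_w^{\circ}\cap Y}$.
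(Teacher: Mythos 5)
Your overall architecture (BB decomposition, reduction to fixed points, globalization) matches the paper's, and the easy direction (smooth $\Rightarrow$ regular, via $\dim Z\le |E(Z,x)|\le \dim T_xZ$) is fine. But the step you yourself identify as the crux --- the ``local criterion'' that $\deg_{\Gamma(Z)}(p)=\dim_pZ$ forces smoothness at $p$ --- is a genuine gap, for two reasons. First, the criterion as you state it (a pointwise ``if and only if'') is too strong: the correct local statement (the paper's Corollary~\ref{cor:local_smooth}) requires the degree condition at \emph{every} fixed point $v$ with $w\le v\le p$, not only at $p$; a single vertex can have the right degree while the variety is singular there because of bad behaviour at intermediate vertices. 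Second, and more decisively, your proposed mechanism for proving it does not actually use the simply-laced hypothesis: the relation $\chi_j=2\chi_i$ between roots is excluded in \emph{every} reduced root system, so if ``no pure power of a single $u_i$'' were enough, the theorem would hold for all types --- yet it fails outside the simply-laced case (there are rationally smooth but singular Schubert varieties in type $C_2$, where the $T$-curve count equals the dimension at every fixed point). So the tangent-cone argument cannot close the gap as written; you would in effect be re-deriving a strengthening of Carrell--Kuttler's theorem from scratch.

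The paper's route around this is worth internalizing: it first proves irreducibility from regularity (via an equivariant-class comparison, Lemma~\ref{lemma2} and Propositions~\ref{prop_irr_general}--\ref{prop_irr}), then invokes Carrell--Kuttler's criterion (Proposition~\ref{prop_good curve}): under $|E(Z,x)|=\dim Z$, smoothness at $x$ is equivalent to the existence of at least two \emph{good} curves through $x$, i.e.\ curves whose generic points are smooth points of $Z$. The existence of good curves is exactly what your sketch does not supply, and the paper obtains it by induction on $\ell(v)-\ell(w)$: using the local product structure $\mathscr{O}_v\cong\mathscr{O}_v^-\times\mathscr{O}_v^+$ and the smoothness already established at the fixed points $u\lessdot v$, the $T$-curves joining such $u$ to $v$ are shown to be good, and regularity guarantees there are at least two of them. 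Your purity/source analysis in the last paragraph also leans on $Y_v$ being open in $Z$ near $v$ for every component, which needs the irreducible-component identification that the paper establishes separately; but the essential missing ingredient is the good-curve induction replacing your unsubstantiated local criterion.
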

We first notice that Theorem~\ref{thm_main} can be applied to a flag variety in any Lie type as long as it is simply-laced as stated. Moreover, we
notice that the implication $\eqref{statement_smooth} \implies \eqref{statement_regular}$ follows from the fact that $\Ow{w} \cap Y$ has the induced $T$-action and it satisfies the GKM condition. Indeed, the graph $\Gamma(\Ow{w} \cap Y)$ encodes the data of $T$-fixed points and $T$-stable curves.
Therefore, the number of $T$-stable curves connecting to a fixed point $u$ is the same as the number of edges in $\Gamma(\Ow{w} \cap Y)$ connecting to the vertex corresponding to $u$ that is again the same as the dimension of the tangent space at that point.
To obtain the converse statement, we first prove that the regularity of the graph $\Gamma(\Ow{w} \cap Y)$ implies the irreducibility of the intersection~$\Ow{w} \cap Y$, and then consider the smoothness at each fixed point. As a main tool, we use the result \cite{CK} of Carrell and Kuttler for the smoothness of $T$-stable subvarieties of $G/B$. See Section~\ref{sec:Irr} for more details.

Applying Theorem~\ref{thm_main} to the intersection of $\Ow{w}$ with a Hessenberg variety $\Hess(s,h)$, we obtain the following sufficient condition for the smoothness of the Hessenberg Schubert variety~$\Owh{w}$.
\begin{theorem}\label{thm_main_Hess}
Let $h$ be a Hessenberg function.
Then the following statements are equivalent.
\begin{enumerate}
    \item $\Gamma(\Ow{w} \cap \Hess(s, h))$ is regular. \label{statement_regular}
    \item $\Ow{w} \cap \Hess(s, h)$ is smooth. \label{statement_smooth}
\end{enumerate}
In particular, if one of the above conditions holds, then $\Owh{w}$ is smooth. If, furthermore, $\Ow{w}\cap\Hess(s,h)$ is connected, then we have $\Ow{w} \cap \Hess(s, h) = \Owh{w}$.
\end{theorem}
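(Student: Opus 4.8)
The plan is to obtain Theorem~\ref{thm_main_Hess} as the specialization of Theorem~\ref{thm_main} to the subvariety $Y = \Hess(s,h)$, so the bulk of the work is simply to confirm that the hypotheses of Theorem~\ref{thm_main} are met in this setting. First I would address the ambient group. Theorem~\ref{thm_main} is stated for a simply-laced simple $G$, whereas here $\flag(\C^n) = \GL_n(\C)/B$ has $\GL_n(\C)$ only reductive. However, the flag variety is unchanged upon passing to the semisimple part, so $\flag(\C^n)$ is isomorphic, as a $T$-variety, to $\SL_n(\C)/B'$ with $B' = B \cap \SL_n(\C)$, and $\SL_n(\C)$ is simple of type $A_{n-1}$, hence simply-laced. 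Under this identification the maximal torus, its fixed points (indexed by $\S_n$), and the $T$-stable curves all correspond, so the graph $\Gamma(-)$ and its regularity are intrinsic to either presentation.

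Next I would check the hypotheses on $Y$. As recalled in the introduction, a regular semisimple Hessenberg variety $\Hess(s,h)$ is a smooth projective subvariety of $\flag(\C^n)$, and it is stable under the maximal torus $T = \Stab_G(s)$. Thus $Y = \Hess(s,h)$ is precisely a smooth $T$-stable subvariety of $X = \flag(\C^n)$, as required.

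With these verifications in hand, Theorem~\ref{thm_main} applies directly with $Y = \Hess(s,h)$. Since $\Gamma(\Ow{w} \cap \Hess(s,h))$ is exactly the graph $\Gamma(\Ow{w} \cap Y)$ appearing there, the equivalence of statements (1) and (2) follows at once, as does the smoothness of $\overline{\Owo{w} \cap \Hess(s,h)}$ whenever either holds. To conclude, I would invoke the definition of the Hessenberg Schubert variety, $\Owh{w} = \overline{\Owo{w} \cap \Hess(s,h)}$: this immediately gives the smoothness of $\Owh{w}$, and, under the additional assumption that $\Ow{w} \cap \Hess(s,h)$ is connected, the equality $\Ow{w} \cap \Hess(s,h) = \Owh{w}$ coming from the corresponding conclusion of Theorem~\ref{thm_main}.

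Because the entire content of the argument is carried by Theorem~\ref{thm_main}, I do not anticipate any genuine obstacle. The only step requiring attention is the passage from $\GL_n(\C)$ to the simple, simply-laced group $\SL_n(\C)$, where one must be sure that the identification of flag varieties is $T$-equivariant, so that the GKM data --- fixed points, $T$-stable curves, and hence the graph $\Gamma(\Ow{w}\cap\Hess(s,h))$ together with its regularity --- are genuinely preserved. Once that compatibility is recorded, the theorem is an immediate corollary.
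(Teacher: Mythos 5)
Your proposal is correct and matches the paper's own treatment: Theorem~\ref{thm_main_Hess} is obtained there precisely by specializing Theorem~\ref{thm_main} to $Y=\Hess(s,h)$, using that $\Hess(s,h)$ is a smooth $T$-stable subvariety of $\flag(\C^n)=\SL_n(\C)/B'$ with $\SL_n(\C)$ simply-laced, and that $\Owh{w}=\overline{\Owo{w}\cap\Hess(s,h)}$ by definition. Your explicit check of the passage from $\GL_n(\C)$ to $\SL_n(\C)$ is a point the paper leaves implicit, but it is the same argument.
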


Because of the construction, a Hessenberg Schubert variety $\Owh{w}$ admits the action of $T$ and $\Owh{w}^T \subset (\Omega_w \cap \Hess(s,h))^T$.
A permutation~$w\in \S_n$ is called \emph{$h$-admissible} if the following inequality holds:
     $w^{-1}(w(j)+1)\leq h(j)$  for all $j$ with $w(j)\leq n-1$, which is known to be equivalent to $\Omega_{w,h}^T=(\Omega_w \cap \Hess(s,h))^T$ (see Proposition \ref{prop:fixed_points_admissible}).
As is considered in~\cite[Theorem~A]{CHL2}, the set of $h$-admissible permutations corresponds to a set of Hessenberg Schubert cohomology classes which \emph{generates} the cohomology group $H^*(\Hess(s,h);\C)$ as a $\S_n$-module.

Theorem~\ref{thm_main_Hess} provides a \emph{combinatorial} criterion on the graph $\Gamma(\Ow{w} \cap \Hess(s,h))$ for the smoothness of the Hessenberg Schubert variety $\Owh{w}$.
However, in practice, the regularity of the graph $\Gamma(\Ow{w} \cap \Hess(s,h))$ is too strong to apply to arbitrary permutations.
Instead of applying Theorem~\ref{thm_main_Hess} directly,  we use a representative obtained from the following property: For a given permutation $w$, there uniquely exists an $h$-admissible permutation $\widetilde{w}$ such that $\Gamma(\Owh{w}) \cong \Gamma(\Owh{\widetilde{w}})$.
Indeed, the corresponding Hessenberg Schubert varieties provide \emph{prototypes} of other Hessenberg Schubert varieties. See Proposition~\ref{prop:prop_h-admissible} and Corollary \ref{cor. graph}.
Using the regularity of the graph $\Gamma(\Omega_{\widetilde{w}} \cap \Hess(s,h))$, one can decide the smoothness of the Hessenberg Schubert variety $\Owh{w}$:
\begin{theorem}\label{thm_regular_implies_smoothness}
Let $w \in \S_n$ and let $h$ be a Hessenberg function. Let $\widetilde{w}$ be the $h$-admissible permutation corresponding to $w$.
\begin{enumerate}
\item If $\Gamma(\Omega_{\widetilde{w}} \cap \Hess(s,h))$ is regular, then $\Owh{w}$ is smooth.
\item The intersection $\Ow{\widetilde{w}} \cap \Hess(s,h)$ is smooth at  $v \in (\Ow{\widetilde{w}} \cap \Hess(s,h))^T$ if $v=\widetilde{w}t$ for some reflection $t$, and consequently, $\Omega_{w,h}$ is smooth at $z \in  \Omega_{w,h}^T$ if $z=wt$ for some reflection $t$.
\end{enumerate}
\end{theorem}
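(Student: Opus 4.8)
The plan is to prove each part of Theorem~\ref{thm_regular_implies_smoothness} by reducing it to the smoothness criteria established in the earlier theorems, exploiting the graph isomorphism that links an arbitrary $w$ to its $h$-admissible representative $\widetilde{w}$.

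For part~(1), I would proceed as follows. First, I invoke the defining property of $\widetilde{w}$ recorded in Proposition~\ref{prop:prop_h-admissible} and Corollary~\ref{cor. graph}, namely that $\Gamma(\Owh{w}) \cong \Gamma(\Owh{\widetilde{w}})$ as graphs. Next, since $\widetilde{w}$ is $h$-admissible, Proposition~\ref{prop:fixed_points_admissible} gives the equality of fixed point sets $\Omega_{\widetilde{w},h}^T = (\Omega_{\widetilde{w}} \cap \Hess(s,h))^T$, which identifies $\Gamma(\Omega_{\widetilde{w},h})$ with $\Gamma(\Omega_{\widetilde{w}} \cap \Hess(s,h))$. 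Thus the hypothesis that $\Gamma(\Omega_{\widetilde{w}} \cap \Hess(s,h))$ is regular transfers, via the isomorphism, to the regularity of $\Gamma(\Owh{w})$. The key point now is that regularity of a GKM graph at a vertex detects the tangent space dimension there, so a regular graph forces the number of $T$-stable curves through each fixed point to be constant. I would then apply Theorem~\ref{thm_main_Hess}: the regularity of $\Gamma(\Omega_{\widetilde{w}}\cap\Hess(s,h))$ yields smoothness of $\Omega_{\widetilde{w}}\cap\Hess(s,h)$, hence of $\Omega_{\widetilde{w},h}$, and by the graph isomorphism this smoothness passes to $\Owh{w}$.

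For part~(2), the strategy is to verify the local smoothness criterion of Carrell and Kuttler~\cite{CK} at the specified fixed points. The idea is that when $v = \widetilde{w}t$ for a reflection $t$, the vertex $v$ is adjacent to $\widetilde{w}$ in the graph $\Gamma(\Omega_{\widetilde{w}}\cap\Hess(s,h))$, and one controls the local structure at such neighboring vertices directly. I would compute the number of edges in $\Gamma(\Omega_{\widetilde{w}}\cap\Hess(s,h))$ incident to $v$ and compare it with the expected dimension, using the GKM description of $T$-stable curves together with the $h$-admissibility of $\widetilde{w}$ to enumerate which reflections $t'$ give $T$-stable curves $\overline{vt'}$ lying inside $\Omega_{\widetilde{w}}\cap\Hess(s,h)$. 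Having matched the edge count at $v$ with the tangent space dimension, smoothness at $v$ follows from the Carrell--Kuttler criterion. The final assertion about $\Omega_{w,h}$ being smooth at $z = wt$ is then obtained by transporting the local smoothness at $v$ through the graph isomorphism $\Gamma(\Owh{w}) \cong \Gamma(\Owh{\widetilde{w}})$, which matches $z$ with $v$ and preserves incidence data.

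I expect the main obstacle to lie in part~(2), specifically in the precise enumeration of $T$-stable curves through the vertex $v = \widetilde{w}t$ and in verifying that the graph isomorphism of Corollary~\ref{cor. graph} respects local (vertex-neighborhood) data, not merely the global graph structure. The reflection-neighbor condition $v = \widetilde{w}t$ is what makes the local combinatorics tractable, since it restricts attention to vertices at Bruhat-distance one from $\widetilde{w}$, where the relevant inversions and the $h$-admissibility inequalities can be checked explicitly; extending beyond such vertices would require the full strength of the global regularity hypothesis of part~(1). Care is also needed to ensure that applying the Carrell--Kuttler smoothness criterion locally, rather than globally, is legitimate, which relies on that criterion being pointwise in nature.
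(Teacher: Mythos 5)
Your overall architecture is right (reduce to the $h$-admissible representative, apply Theorem~\ref{thm_main_Hess}, then check a local Carrell--Kuttler criterion at $v=\widetilde{w}t$), but both parts have genuine gaps. In part (1), the step ``by the graph isomorphism this smoothness passes to $\Owh{w}$'' does not work: Corollary~\ref{cor. graph} is an isomorphism of \emph{unlabeled} graphs, and such an isomorphism carries no information about singularities. What actually transfers smoothness is the variety-level identity of Proposition~\ref{prop:prop_h-admissible}, $\Owh{w}=u\bigl(\overline{\Owo{\widetilde{w}}\cap\Hess(u^{-1}su,h)}\bigr)$ with $u=w\widetilde{w}^{-1}$ --- and note that the right-hand side involves the conjugated element $u^{-1}su$, not $s$. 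So one must first observe that the GKM graph of $\Hess(\cdot,h)$, hence the regularity hypothesis, is independent of the choice of regular semisimple element, and only then apply Theorem~\ref{thm_main_Hess} to $\Ow{\widetilde{w}}\cap\Hess(u^{-1}su,h)$; this is precisely the paper's (short) argument. A second inaccuracy: Proposition~\ref{prop:fixed_points_admissible} only gives equality of \emph{vertex} sets of $\Gamma(\Owh{\widetilde{w}})$ and $\Gamma(\Ow{\widetilde{w}}\cap\Hess(s,h))$; the edge sets can genuinely differ even for $h$-admissible $\widetilde{w}$ (see Example~\ref{example_reducible}), so the two graphs are not ``identified'' by that proposition. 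Under the regularity hypothesis the two varieties do coincide, but that requires Proposition~\ref{prop_irr}, which you did not invoke.

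In part (2), what you have written is a plan rather than a proof: the assertion that the edge count at $v=\widetilde{w}t$ matches $\dim\Owo{\widetilde{w},h}$ is the entire mathematical content of the statement, and the paper establishes it by showing that the injective map $\phi_{\widetilde{w}v}\colon E_{\widetilde{w},h}(\widetilde{w})\to E_{\widetilde{w},h}(v)$ of Lemma~\ref{lem:CHP} is \emph{surjective}, via a four-case analysis ($i=a<j<b$; $a<i<j=b$; $i<a$, $j\in\{a,b\}$; $i\in\{a,b\}$, $j>b$) resting on the Bruhat-order criterion of Proposition~\ref{prop:Bruhat order}. You flag this enumeration as ``the main obstacle'' but do not carry it out, so the core of the argument is missing. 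Two further points: (i) Corollary~\ref{cor:local_smooth} requires the degree condition at \emph{every} $v'$ with $\widetilde{w}\leq v'\leq v$, not only at $v$ itself; the paper gets the intermediate vertices for free from the monotonicity $u<_h u'\Rightarrow\deg_{\widetilde{w},h}(u)\leq\deg_{\widetilde{w},h}(u')$ of Lemma~\ref{lem:CHP} combined with the lower bound of Lemma~\ref{lem:number_of_edges}, and your proposal does not address this. (ii) Your claim that $v=\widetilde{w}t$ is adjacent to $\widetilde{w}$ in $\Gamma(\Ow{\widetilde{w}}\cap\Hess(s,h))$ holds only when $t=(a,b)$ with $b\leq h(a)$, which is not part of the hypothesis. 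Finally, the ``consequently'' clause again needs the variety isomorphism from Proposition~\ref{prop:prop_h-admissible} (which identifies $z=wt$ with $u^{-1}z=\widetilde{w}t$ as points of isomorphic varieties), not the unlabeled graph isomorphism.
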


We note that the notions of Hessenberg varieties and Schubert varieties can be defined in arbitrary types. In Section~\ref{sec:arbitrary}, we provide a notion of admissibility for arbitrary types. Specifically, we provide a set of elements in the Weyl group $W$ of $G$ such that the corresponding Hessenberg Schubert cohomology classes generate the cohomology group of a Hessenberg variety as a $W$-module. Moreover, we generalize Theorem~\ref{thm_regular_implies_smoothness}(1) to arbitrary Lie types in Theorem~\ref{thm_regular_implies_smoothness arbitrary}.

Although the Hessenberg Schubert variety $\Owh{w}$ depends on the choice of a regular semisimple element $s$ as an algebraic variety (cf. \cite{BEHLLMS}), the regularity of the graph $\Gwh{w}$ does not depend on the choice of $s$ and provides the smoothness of $\Owh{w}$.

One may wonder whether the converse of Theorem~\ref{thm_regular_implies_smoothness} (1) holds, but the converse does not hold in general, as is exhibited in Example~\ref{example_reducible}. That is because the graph $\Gwh{w}$ basically encodes the data of $\Ow{w} \cap \Hess(s,h)$, not that of $\Owh{w}$.

We notice that the regularity of the graph $\Gwh{w}$ for an $h$-admissible $w$ has been deeply studied in~\cite{CHP}.
For example, to show the regularity of the graph $\Gwh{w}$, it suffices to check the regularity only at $w_0$, the longest element in $\S_n$ (see Corollary~\ref{cor regularity}). Furthermore,
the graph $\Gwh{w}$ is regular if and only if the permutation $w$ avoids certain patterns. Using these descriptions, we obtain Corollary~\ref{cor_regular_and_patterns_admissible}, which gives a sufficient condition on $w$ for the smoothness of $\Owh{w}$.

The organization of the paper is as follows. In Section~\ref{sec:Prelim}, we recall the basic notion of GKM varieties and some examples, including flag varieties and Hessenberg varieties. In Section~\ref{sec:Irr}, we prove Theorem~\ref{thm_main} considering the irreducibility of the intersection $\Ow{w} \cap Y$ and the smoothness under the regularity assumption. Moreover, we prove Theorem~\ref{thm_regular_implies_smoothness} in Section~\ref{sec:Hess} by considering the case of regular semisimple Hessenberg varieties of type $A$. In Section~\ref{sec:arbitrary}, we generalize some results in the previous section to the case of arbitrary type.  We provide further questions in Section~\ref{sec:questions}.

\section{GKM varieties and their examples}\label{sec:Prelim}

In this section, we provide a brief overview of GKM theory following~\cite{GKM,Tymoczko}. We introduce flag varieties and Hessenberg varieties as examples of GKM varieties and describe the cohomology of a Hessenberg variety within this framework. We also review the definitions and key properties of Hessenberg Schubert varieties.

\subsection{GKM theory and GKM cohomology}
Let $X$ be a (possibly singular) complex projective algebraic variety with an action of the algebraic torus $T\cong (\C^\ast)^n$. We say $X$ is a \emph{GKM (Goresky--Kottwitz--MacPherson) variety} if it satisfies the following three conditions:
\begin{itemize}
    \item $X$ has finitely many $T$-fixed points,
    \item there are finitely many (complex) one-dimensional orbits of $T$ on $X$, and
    \item $X$ is equivariantly formal with respect to the action of $T$.
\end{itemize}
The third property is rather technical but holds when $X$ has no odd-degree ordinary cohomology. More precisely, if $H^{odd}(X)$ vanishes, then $X$ is equivariantly formal with respect to every torus action.

Let $\mathbb{Z}[t_1,\dots,t_n]$ be the symmetric algebra over $\mathbb{Z}$ of the character group $\mathrm{Hom}(T,\C^\ast)$.
Based on the zero- and one-dimensional orbits of a GKM variety $X$, the \emph{GKM graph} $\Gamma=(V, E, \alpha)$ is a directed graph whose edges are labeled by $\alpha$, and it is constructed as follows:
\begin{enumerate}
    \item $V=X^T$, the set of $T$-fixed points of $X$;
    \item a directed edge $v\to w$ belongs to $E$ if and only if there exists a one-dimensional orbit $\mathcal{O}_{v,w}$ of $T$ on $X$ whose closure $\overline{\mathcal{O}_{v,w}}$ has $v$ and $w$ as the $T$-fixed points; and
    \item we label each directed edge $v\to w$ with the weight $\alpha(v\to w)\in\mathbb{Z}[t_1,\dots,t_n]$ of the $T$-action corresponding to the closure $\overline{\mathcal{O}_{v,w}}$ at the fixed point $v$.
\end{enumerate}
Note that condition (2) implies that $v\to w$ belongs to $E$ if and only if $w\to v$ belongs to $E$, and the label~$\alpha$ satisfies $\alpha(v\to w)=-\alpha(w\to v)$.

Using the GKM graph, Goresky, Kottwitz, and MacPherson~\cite{GKM} describe the equivariant cohomology ring of a GKM variety.

\begin{theorem}\cite[Theorem~1.6.2]{GKM}\label{thm_GKM_cohomology}
    Let $X$ be a GKM variety with the GKM graph $\Gamma=(V, E, \alpha)$. Then a map $p\colon X^T\to \mathbb{C}[t_1,\dots,t_n]$ represents a cohomology class in $H^\ast_T(X)$ if and only if it satisfies a compatibility condition:
    $$p(v)\equiv p(w)\pmod{\alpha(v\to w)}\text{ for all }v\to w \text{ in }E,$$ and
    the equivariant cohomology ring $H_T^\ast(X;\C)$ is isomorphic to
    \begin{equation*}
        \begin{split}
            &H^\ast(\Gamma;\mathbb{C})\\
            &\coloneqq \left\{(p(v))\in \bigoplus_{v\in V}\mathbb{C}[t_1,\dots,t_n]\,\middle|\,\alpha(v\to w)\mid (p(v)-p(w))\text{ for all }v\to w \text{ in }E\right\}
        \end{split}
    \end{equation*} as a $\C[t_1,\dots,t_n]$-algebra.
\end{theorem}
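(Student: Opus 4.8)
The plan is to establish the isomorphism by analyzing the restriction map to the fixed locus. Writing $R = \mathbb{C}[t_1,\dots,t_n] = H^*_T(\mathrm{pt})$, the inclusion $\iota \colon X^T \hookrightarrow X$ induces $\iota^* \colon H^*_T(X) \to H^*_T(X^T) = \bigoplus_{v \in V} R$, and the image of this map is what we want to identify with $H^*(\Gamma)$. First I would invoke equivariant formality: by definition it makes $H^*_T(X)$ a free $R$-module, and the localization theorem of Atiyah--Bott/Quillen says that $\iota^*$ becomes an isomorphism after inverting the nonzero characters appearing as weights. Freeness then forces $\iota^*$ to be injective already over $R$, so $H^*_T(X) \cong \mathrm{image}(\iota^*)$ and the problem reduces to computing this image inside $\bigoplus_v R$.

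The easy containment $\mathrm{image}(\iota^*) \subseteq H^*(\Gamma)$ comes from restricting to each invariant curve. For a one-dimensional orbit with closure $\overline{\mathcal{O}_{v,w}} \cong \mathbb{P}^1$ having fixed points $v,w$, naturality gives a commuting square, and a direct computation of $H^*_T(\mathbb{P}^1)$ shows that the restrictions to $v$ and $w$ of any equivariant class agree modulo the weight $\alpha(v \to w)$. Imposing this over every edge yields exactly the divisibility conditions defining $H^*(\Gamma)$, so every class in the image satisfies them.

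The reverse containment is the substantive step, and I would obtain it through the Chang--Skjelbred lemma. Let $X_1 \subseteq X$ be the one-skeleton, the union of $X^T$ with the closures of all one-dimensional orbits; equivalently $X_1 = \bigcup_H X^H$, where $H$ ranges over the codimension-one subtori fixing at least a curve. Under equivariant formality, the Chang--Skjelbred lemma asserts that $\mathrm{image}(\iota^*) = \bigcap_H \mathrm{image}\!\left(H^*_T(X^H) \to H^*_T(X^T)\right)$. Each fixed locus $X^H$ is a disjoint union of isolated points and copies of $\mathbb{P}^1$ (the invariant curves with that isotropy), so its contribution is precisely the set of tuples that are congruent modulo the corresponding weight along each such $\mathbb{P}^1$. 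Intersecting over all $H$ recovers every edge congruence, giving $\mathrm{image}(\iota^*) = H^*(\Gamma)$ and hence the desired ring isomorphism, the ring structure being inherited componentwise from $\bigoplus_v R$.

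The main obstacle is the Chang--Skjelbred step, which is where equivariant formality is genuinely used: it guarantees the vanishing of the relevant relative equivariant cohomology $H^*_T(X_2, X_1)$ (with $X_2$ the two-skeleton) needed to force the image to be cut out by the one-skeleton alone. Establishing this exactness — rather than merely the inclusion of images — is the heart of the argument; once it is in place, the reduction to $\mathbb{P}^1$-computations is routine.
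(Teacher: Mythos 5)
The paper does not prove this statement; it is quoted verbatim from \cite[Theorem~1.6.2]{GKM}, so there is no in-paper argument to compare against. Your outline is the standard proof of that cited result: equivariant formality plus Atiyah--Bott/Quillen localization gives injectivity of $\iota^*$, restriction to the invariant $\mathbb{P}^1$'s gives the containment of the image in $H^*(\Gamma;\C)$, and the Chang--Skjelbred lemma (whose validity rests exactly on the freeness of $H^*_T(X)$ over $H^*_T(\mathrm{pt})$) gives the reverse containment by reducing to the one-skeleton, where each $X^H$ is a union of fixed points and invariant curves. This is correct and is essentially the argument in the cited reference, so no gap to report.
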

We call a map $p$ in the above theorem a GKM cohomology class, and for a $T$-stable subvariety $Y$ of $X$, we denote by $[Y]_T$ the GKM cohomology class corresponding to $Y$ in $H_T^\ast(X)$.

It follows from Theorem~\ref{thm_GKM_cohomology} that the ordinary cohomology ring of a GKM variety $X$ is
$$H^\ast(X;\C)\cong H^\ast(\Gamma;\C)/(t_1,\dots,t_n),$$ where we use $t_i$ to indicate the element in $H^\ast(\Gamma;\C)$ whose value at each $v\in V$ is $t_i$.

\subsection{Flag varieties}\label{subsection_flag}
Let $G$ be the general linear group $\GL_n(\mathbb{C})$ and let $B$ (resp. $B^-$) be the set of upper (resp. lower) triangular matrices in $G$. Then the flag variety $\flag(\mathbb{C}^n)$ is the homogeneous space $G/B$, which is a smooth projective variety of dimension $d=\binom{n}{2}$. We can identify $\flag(\mathbb{C}^n)$ with
\[\{V_\bullet=( \{0\}\subsetneq V_1\subsetneq V_2\subsetneq\cdots\subsetneq V_n=\mathbb{C}^n)\mid \dim V_i=i\text{ for all }i=1,\dots,n\},\]
where each $V_\bullet$ is called a \emph{flag}. For example, each permutation $w\in\S_n$ defines a flag
\[(\{0\}\subsetneq \langle \mathbf{e}_1\rangle\subsetneq \langle \mathbf{e}_{w(1)},\mathbf{e}_{w(2)}\rangle\subsetneq\cdots\subsetneq \langle \mathbf{e}_{w(1)},\dots,\mathbf{e}_{w(n)}\rangle=\C^n),\]
so-called a \emph{coordinate flag}, where $\mathbf{e}_1,\dots,\mathbf{e}_n$ are standard basis vectors. If there is no confusion, we will denote the coordinate flag defined by the permutation \( w \) simply as \( wB \).

Let $T$ be the set of diagonal matrices in $G$. Then $T\cong (\C^\ast)^{n}$ and the left multiplication of $T$ on $G$ induces the action of $T$ on $\flag(\C^n)$. The set of $T$-fixed points of $\flag(\C^n)$ consists of coordinate flags, so we can identify $(\flag(\C^n))^T$ with $\S_n$. Furthermore, two $T$-fixed points $v$ and $w$ are connected by a one-dimensional orbit of $T$ if and only if $w=v(i,j)$ and the weight of the orbit closure $\overline{\mathcal{O}_{v,w}}$ is $t_{v(i)}-t_{v(j)}$, where $(i,j)$ is the transposition interchanging $i$ and $j$ for $1\leq i\neq j\leq n$. Therefore, the action of $T$ on $\flag(\C^n)$ has finitely many $T$-fixed points and finitely many one-dimensional $T$-orbits.

The flag variety $\flag(\C^n)$ admits a cell decomposition called the \emph{Bruhat decomposition}:
\begin{equation}\label{eq:Bruhat_Decomp}
    \flag(\C^n)=\bigsqcup_{w\in\S_n} BwB/B = \bigsqcup_{w\in\S_n} B^-wB/B.
\end{equation}
For each $w\in\S_n$, the cells $X_w^\circ\coloneqq BwB/B$ and $\Owo{w}\coloneqq B^-wB/B$ are isomorphic to $\C^{\ell(w)}$ and $\C^{d-\ell(w)}$, respectively, where $$\ell(w)=|\{(i,j)\mid 1\leq i<j\leq j,\,w(i)>w(j)\}|.$$ Thus, the cohomology of $\flag(\C^n)$ vanishes in odd degrees. Therefore, $\flag(\C^n)$ is the GKM variety associated with the GKM graph $\Gamma=(V,E,\alpha)$, where
\begin{enumerate}
    \item $V=\S_n$;
    \item $E=\{w\to w(i,j)\mid 1\leq i<j\leq n\}$; and
    \item $\alpha(w\to w(i,j))=t_{w(i)}-t_{w(j)}$.
\end{enumerate}

We demonstrate the GKM graph of $\flag(\C^3)$ in Figure~\ref{fig_GKM_n3}.
\begin{figure}
\begin{tikzpicture}[scale = 0.5]
    \foreach \x/\y in {312/30, 321/90, 231/150, 132/-30, 123/-90, 213/-150}{
        \node[shape = circle, fill=none, draw=black, inner sep = 0pt , minimum size=1.2mm] (\x) at (\y:2) {};
    }

    \node[left] at (213) {$213$};
    \node[left] at (231) {$231$};
    \node[below] at (123) {$123$};
    \node[right] at (132) {$132$};
    \node[right] at (312)  {$312$};
    \node[above] at (321) {$321$};

    \draw[line width= 0.4ex, green!50!black,  semitransparent] (123)--(213)
    (312)--(321)
    (231) to (132);

    \draw[line width= 0.4ex, blue,  semitransparent] (123)--(132)
    (231)--(321)
    (213) to (312);

    \draw[line width= 0.4ex, red,  semitransparent] (213)--(231)
    (132)--(312)
    (123) to (321);

        \begin{scope}[xshift=5cm, yshift=-2cm]
    \draw[line width= 0.4ex, green!50!black,  semitransparent] (0,2)--(1,2)
        node[at end, right, black, opacity = 1] {$\pm (t_2 - t_1)$};

    \draw[line width= 0.4ex, blue,  semitransparent] (0,1)--(1,1)
        node[at end, right, black, opacity=1] {$\pm (t_3-t_2)$};

    \draw[line width= 0.4ex, red,  semitransparent] (0,0)--(1,0)
        node[at end, right, black, opacity=1] {$\pm (t_3 - t_1)$};
    \end{scope}
\end{tikzpicture}
\caption{The GKM graph of $\flag(\C^3)$}\label{fig_GKM_n3}
\end{figure}
In~\eqref{eq:Bruhat_Decomp}, the cells $X_w^\circ$ and $\Owo{w}$ are called the \emph{Schubert cell} and the \emph{opposite Schubert cell} indexed by $w$, respectively.  Their closures $X_w\coloneqq \overline{X_w^\circ}$ and $\Ow{w}\coloneqq \overline{\Owo{w}}$ are the \emph{Schubert variety} and the \emph{opposite Schubert variety} indexed by $w$, respectively.

Note that the decompositions in~\eqref{eq:Bruhat_Decomp} are also known as the Bia{\l}ynicki-Birula plus and minus decompositions, respectively, and the two cells $X_w^\circ$ and $\Omega_w^\circ$ are the plus and minus Bia{\l}ynicki-Birula cells, respectively.

The inclusion relations among Schubert varieties define a partial order on $\S_n$, called the \emph{Bruhat order}:
\[
v\leq w\qquad \iff \qquad X_v\subseteq X_w.
\]
The minimal and maximal elements of $\S_n$ in the Bruhat order are $e\coloneqq 12\cdots n$ and $w_0\coloneqq n(n-1)\cdots 1$, respectively. Since $B^-=w_0Bw_0$, we get $\Ow{w}=w_0 X_{w_0w}$ and
\[
v\leq w  \qquad \iff \qquad \Ow{v}\supseteq \Ow{w}.
\]
In particular, $X_w$ and $\Ow{w}$ admit cell decompositions:
\begin{equation}\label{eq:decomp_Sch}
    X_w=\bigsqcup_{v\leq w} X_v^\circ\quad\text{ and }\quad \Owo{w}=\bigsqcup_{v\geq w}\Owo{v},
\end{equation} which define a stratification of each variety.

The \emph{Bruhat graph} on $\S_n$ is a directed graph whose vertex set is $\S_n$ and whose edges connect permutations related by a reflection.
Then the GKM graph of $\flag(\C^n)$ is the Bruhat graph on $\S_n$ and the labels on the edge $w\to w(i,j)$ is $\pm(t_{w(i)}-t_{w(j)})$.

Note that for each $w\in\S_n$, both $X_w$ and $\Ow{w}$ are $T$-stable and their sets of $T$-fixed points are given by
\[X_w^T=[e,w]\qquad\text{ and }\qquad\Ow{w}^T=[w,w_0],\]
where $[v,w]$ denotes the set of all permutations $z\in\S_n$ satisfying $v\leq z\leq w$ in the Bruhat order.
Therefore, we have the following equivalence:
\[v\leq w\qquad\iff\qquad w\in\Ow{v}^T.\]
From now on, we will only consider the opposite Schubert variety \( \Ow{w} \), so we will omit ``opposite" and refer to \(\Ow{w} \) as the Schubert variety if there is no confusion.

\subsection{Hessenberg varieties}\label{subsec_Hess}
A nondecreasing function $h\colon [n] \to [n]$ is called a \emph{Hessenberg function} if $h(i)\geq i$ for each $i\in [n]$. We often use the list of values $(h(1), h(2), \dots, h(n))$ of $h$ to represent a Hessenberg function $h\colon [n] \to [n]$.

For a linear operator $x$ on $\mathbb{C}^n$, the \emph{Hessenberg variety} determined by $x$ and $h$ is defined as
$$\Hess(x, h)\coloneqq \{V_\bullet \in \flag(\C^n) \,\, |\,\, x(V_i)\subseteq V_{h(i)} \mbox{ for } i=1, \dots, n \} \,.$$
For example, when $h=(n, n, \dots, n)$,  $\Hess(x, h)$ is the flag variety $\flag(\C^n)$.

Note that two Hessenberg varieties $\Hess(x,h)$ and $\Hess(g^{-1}xg,h)$ are isomorphic for $g\in\GL_n(\mathbb{C})$. Hence, if $x$ and $x'$ have the same Jordan form, then $\Hess(x,h)$ and $\Hess(x',h)$ are isomorphic.

Let \( s \) be a regular semisimple linear operator on \( \mathbb{C}^n \); that is, the Jordan form of \( s \) is a diagonal matrix with \( n \) distinct eigenvalues. The Hessenberg variety \( \Hess(s, h) \) determined by \( s \) and a Hessenberg function \( h \) is then called \emph{regular semisimple}, and it forms a smooth projective variety of dimension $$d_h\coloneqq \sum_i (h(i)-i).$$

We fix a diagonal matrix $s$ with $n$ distinct eigenvalues.
From~\eqref{eq:Bruhat_Decomp}, the regular semisimple Hessenberg variety \( \Hess(s, h) \) is decomposed as follows:
\begin{equation}\label{eq:Hess_paving}
\begin{split}
    \Hess(s, h)
&=  \bigsqcup_{w \in \S_n} \left(\Owo{w} \cap \Hess(s, h)\right).
\end{split}
\end{equation}
In particular, for each $w\in\S_{n}$, the intersection $\Owo{w, h} \coloneqq  \Owo{w} \cap \Hess(s, h)$ is a Białynicki-Birula minus cell and it is isomorphic to the affine space $\C^{d_h-\ell_h(w)}$, where $$\ell_h(w)= \left|  \{ (i,j) \mid i<j,\,\,w(i)> w(j), \,\, j\leq h(i)   \}  \right|\,.$$ Therefore, regular semisimple Hessenberg varieties are equivariantly formal with respect to every torus action.

We call $\Owo{w, h}$ the \emph{\textup{(}opposite\textup{)} Hessenberg Schubert cell} indexed by $w$, and set $$\Ow{w,h}\coloneqq {\overline{\Owo{w,h}}}$$ which we call the \emph{\textup{(}opposite\textup{)} Hessenberg Schubert variety} indexed by $w$.

\begin{proposition}\cite[Proposition~5.4]{Tymoczko}\label{prop:Tymoczko} A regular semisimple Hessenberg variety $ \Hess(s, h)$  is a $T$-stable subvariety of $\flag(\C^n)$, and it is a GKM variety with the GKM graph $\Gamma=(V, E, \alpha)$, where
\begin{enumerate}
    \item $V= \S_n$,
    \item $E=\{ w\to w(i,j) \mid 1\leq  i<j\leq h(i) \}$, and
    \item $\alpha(w\to w(i,j))=t_{w(i)}-t_{w(j)}$.
\end{enumerate}
\end{proposition}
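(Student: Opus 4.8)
The plan is to establish Proposition~\ref{prop:Tymoczko} by showing that $\Hess(s,h)$ is $T$-stable, identifying its $T$-fixed points and one-dimensional orbits, and computing the relevant weights. I would begin by verifying $T$-stability: since $T$ is the centralizer of $s$, for any $t \in T$ and any flag $V_\bullet \in \Hess(s,h)$ we have $s(t V_i) = t(s V_i) \subseteq t V_{h(i)}$, using $ts = st$; hence $t V_\bullet \in \Hess(s,h)$, so the $T$-action on $\flag(\C^n)$ restricts to $\Hess(s,h)$. Equivariant formality was already recorded in the paragraph preceding the statement (the paving~\eqref{eq:Hess_paving} shows vanishing of odd cohomology), so it remains to count and label the zero- and one-dimensional orbits.

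For the $T$-fixed points, I would intersect $(\flag(\C^n))^T = \S_n$ with $\Hess(s,h)$. A coordinate flag $wB$ has $V_i = \langle \mathbf{e}_{w(1)}, \dots, \mathbf{e}_{w(i)} \rangle$, each $V_i$ being a coordinate subspace and hence $s$-invariant; thus $s V_i \subseteq V_i \subseteq V_{h(i)}$ automatically, so every coordinate flag lies in $\Hess(s,h)$. This gives $V = \S_n$, establishing item~(1). The key point is that the regular semisimple condition forces the eigenspaces of $s$ to be the coordinate lines, so the $T$-fixed flags are exactly the $s$-invariant coordinate flags, all of which satisfy the Hessenberg condition.

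The substantive step is item~(2): determining which one-dimensional $T$-orbits of $\flag(\C^n)$ actually lie inside $\Hess(s,h)$. Recall from Subsection~\ref{subsection_flag} that the $T$-stable curves of $\flag(\C^n)$ connect $w$ and $w(i,j)$ for $1 \le i < j \le n$, with the curve closure $\overline{\mathcal{O}_{w,w(i,j)}}$ consisting of flags agreeing with $wB$ except that the plane spanned in ranks $i$ through $j-1$ sweeps through the pencil spanned by $\mathbf{e}_{w(i)}$ and $\mathbf{e}_{w(j)}$. I would write down a generic flag on this curve explicitly and impose the Hessenberg condition $s V_k \subseteq V_{h(k)}$. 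The only rank where the condition is non-automatic is $k=i$: the subspace $V_i$ now contains a vector of the form $\mathbf{e}_{w(i)} + c\,\mathbf{e}_{w(j)}$, and $s$ sends this to a vector mixing the two distinct eigenvectors $\mathbf{e}_{w(i)}, \mathbf{e}_{w(j)}$, so the containment $s V_i \subseteq V_{h(i)}$ holds precisely when $\mathbf{e}_{w(j)} \in V_{h(i)}$, i.e.\ when $j \le h(i)$. This yields $E = \{ w \to w(i,j) \mid 1 \le i < j \le h(i) \}$, as claimed.

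Finally, the weight labeling in item~(3) is inherited directly from the ambient flag variety: since the $T$-stable curves of $\Hess(s,h)$ are a subset of those of $\flag(\C^n)$ and carry the restricted $T$-action, the weight of $\overline{\mathcal{O}_{w,w(i,j)}}$ at $w$ is unchanged, namely $\alpha(w \to w(i,j)) = t_{w(i)} - t_{w(j)}$. I expect the main obstacle to be the careful bookkeeping in the explicit flag computation of step~(2): one must confirm that the Hessenberg conditions at ranks $k \neq i$ are genuinely automatic along the entire curve (not just at the endpoints) and that the rank-$i$ condition is equivalent to the single inequality $j \le h(i)$, which hinges on the distinctness of the eigenvalues of $s$ guaranteeing that $s\,\mathbf{e}_{w(i)}$ and $s\,\mathbf{e}_{w(j)}$ are linearly independent from $\mathbf{e}_{w(i)} + c\,\mathbf{e}_{w(j)}$ unless the eigen-line $\langle \mathbf{e}_{w(j)}\rangle$ is already captured by $V_{h(i)}$.
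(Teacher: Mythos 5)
The paper does not actually prove this proposition --- it is quoted verbatim from Tymoczko \cite[Proposition~5.4]{Tymoczko} --- so there is no in-text argument to compare against. Your direct verification is correct and is essentially the standard one: $T$-stability from $ts=st$, the fixed-point set from the $s$-invariance of coordinate subspaces, equivariant formality from the affine paving recorded just before the statement, the edge set by restricting the finitely many $T$-curves of $\flag(\C^n)$ and testing the Hessenberg condition on a generic point of each, and the weights inherited from the ambient flag variety. One place to tighten the wording: along the curve joining $w$ and $w(i,j)$, every subspace $V_k$ with $i \le k < j$ (not only $V_i$) contains the perturbed vector $\mathbf{e}_{w(i)} + c\,\mathbf{e}_{w(j)}$, so the condition $sV_k \subseteq V_{h(k)}$ at each such rank is equivalent to $j \le h(k)$ rather than being automatic. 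These extra conditions are implied by the rank-$i$ condition only because $h$ is non-decreasing, giving $j \le h(i) \le h(k)$; with that observation your reduction of the whole containment to the single inequality $j \le h(i)$ is complete, and the rest of the argument goes through as you describe.
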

Then, by Theorem~\ref{thm_GKM_cohomology} and Proposition~\ref{prop:Tymoczko}, the equivariant cohomology ring of $\Hess(s,h)$ is isomorphic to
\begin{equation*}
        \begin{split}
            & \left\{(p(w))\in\bigoplus_{v\in V}\mathbb{C}[t_1,\dots,t_n]\,\middle|\,p(w)\equiv p(w(i,j))\pmod{t_{w(i)}-t_{w(j)}} \text{ for all }1\leq  i<j\leq h(i)\right\}
        \end{split}
    \end{equation*}
    as a $\C[t_1,\dots,t_n]$-algebra.

We present the GKM graphs of regular semisimple Hessenberg varieties for $h = (2,3,3)$ and $h = (2,2,3)$ in Figure~\ref{fig_GKM_233_223}.
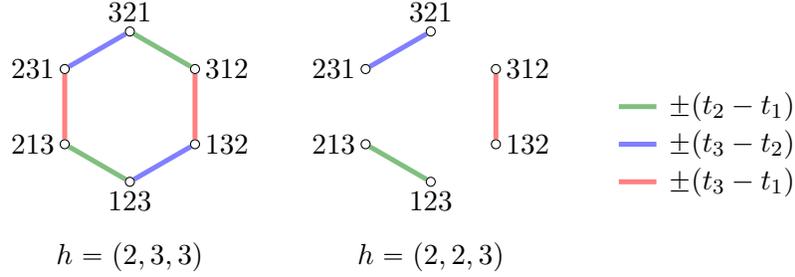
\begin{figure}
\begin{tikzpicture}[scale = 0.5]
\begin{scope}[xshift = -8cm]
    \foreach \x/\y in {312/30, 321/90, 231/150, 132/-30, 123/-90, 213/-150}{
        \node[shape = circle, fill=none, draw=black, inner sep = 0pt , minimum size=1.2mm] (\x) at (\y:2) {};
    }

    \node[left] at (213) {$213$};
    \node[left] at (231) {$231$};
    \node[below] at (123) {$123$};
    \node[right] at (132) {$132$};
    \node[right] at (312)  {$312$};
    \node[above] at (321) {$321$};

    \draw[line width= 0.4ex, green!50!black,  semitransparent] (123)--(213)
    (312)--(321);

    \draw[line width= 0.4ex, blue,  semitransparent] (123)--(132)
    (231)--(321);

    \draw[line width= 0.4ex, red,  semitransparent] (213)--(231)
    (132)--(312);

    \node[below of = 123] {$h = (2,3,3)$};
\end{scope}
    \foreach \x/\y in {312/30, 321/90, 231/150, 132/-30, 123/-90, 213/-150}{
        \node[shape = circle, fill=none, draw=black, inner sep = 0pt , minimum size=1.2mm] (\x) at (\y:2) {};
    }

    \node[left] at (213) {$213$};
    \node[left] at (231) {$231$};
    \node[below] at (123) {$123$};
    \node[right] at (132) {$132$};
    \node[right] at (312)  {$312$};
    \node[above] at (321) {$321$};

    \draw[line width= 0.4ex, green!50!black,  semitransparent] (123)--(213);

    \draw[line width= 0.4ex, blue,  semitransparent] (231)--(321);

    \draw[line width= 0.4ex, red,  semitransparent] (132)--(312);

    \node[below of = 123] {$h = (2,2,3)$};

    \begin{scope}[xshift=5cm, yshift=-2cm]
    \draw[line width= 0.4ex, green!50!black,  semitransparent] (0,2)--(1,2)
        node[at end, right, black, opacity = 1] {$\pm (t_2 - t_1)$};

    \draw[line width= 0.4ex, blue,  semitransparent] (0,1)--(1,1)
        node[at end, right, black, opacity=1] {$\pm (t_3-t_2)$};

    \draw[line width= 0.4ex, red,  semitransparent] (0,0)--(1,0)
        node[at end, right, black, opacity=1] {$\pm (t_3 - t_1)$};
    \end{scope}
	\end{tikzpicture}
\caption{The GKM graphs of regular semisimple Hessenberg varieties for $h = (2,3,3)$ and $h = (2,2,3)$}\label{fig_GKM_233_223}
\end{figure}

For each permutation $w\in\S_n$, from~\eqref{eq:decomp_Sch}, the intersection $\Ow{w}\cap\Hess(s,h)$ is $T$-stable and has an affine cell decomposition:
\begin{equation}\label{eq:cell-decomp-Ow}
    \Ow{w}\cap\Hess(s,h)=\bigsqcup_{v\geq w}(\Owo{v}\cap \Hess(s,h)).
\end{equation}
Hence the intersection $\Ow{w}\cap\Hess(s,h)$ is a GKM variety and its GKM graph is the subgraph of $\Gamma(\Hess(s,h))$ induced by the vertex set $(\Ow{w}\cap\Hess(s,h))^T=[w,w_0]$.
In general, ${\Ow{w}\cap\Hess(s,h)}$ is reducible, and its irreducible component is of the form $\Ow{v,h}$ for some $v\in [w,w_0]$. See also Lemma~\ref{lemma_irreducible}.

\begin{definition}
    For a Hessenberg function~$h$, a permutation~$w$ is called \emph{$h$-admissible} if it satisfies
    $$w^{-1}(w(j)+1)\leq h(j)\text{ for all }w(j)\leq n-1.$$
\end{definition}
For example, when $h=(3,3,4,4)$, there are twelve $h$-admissible permutations:
\[
1234,1423,2134,2341,2431,3241,3412,3421,4123,4231,4312,4321.
\]

One important property of $h$-admissible permutations is that the associated Hessenberg Schubert cohomology classes generate the cohomology of the Hessenberg variety
$\Hess(s,h)$ as an $\mathfrak S_n$-module. For the definition of the action of $\mathfrak S_n$ on $H^*(\Hess(s,h))$, see \cite{Tymoczko}.

\begin{theorem}[{see \cite[Theorem~A]{CHL2} for details}]\label{thm:h-admissible}
For each $k\geq 0$, the set of Hessenberg Schubert cohomology classes~$[\Omega_{w,h}]$ associated with $h$-admissible permutations satisfying $\ell_h(w)=k$ generates the $\S_n$-module $H^{2k}(\Hess(s,h);\C)$, where $[\Omega_{w,h}]$ is the cohomology class of a Hessenberg Schubert variety $\Omega_{w,h}$ in the cohomology ring  of $\Hess(s,h)$.\footnote{In~\cite{CHL}, $[\Omega_{w,h}]$ is denoted by $\sigma_{w,h}$, which we do not use here.}
\end{theorem}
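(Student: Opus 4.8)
The plan is to first pin down the vector-space basis and then reduce the generation statement to a combinatorial membership problem that can be attacked by induction. By the affine paving in~\eqref{eq:Hess_paving}, each cell $\Owo{w,h}$ is a complex affine space, so the classes $\{[\Owh{w}] : w\in\S_n\}$ of the closures form a $\C$-basis of $H^*(\Hess(s,h);\C)$, with $[\Owh{w}]$ sitting in degree $2\ell_h(w)$. Hence $\{[\Owh{w}] : \ell_h(w)=k\}$ is a basis of $H^{2k}(\Hess(s,h);\C)$, and it suffices to show that every basis element lies in the $\S_n$-submodule $M_k$ generated by the classes $[\Owh{u}]$ with $u$ $h$-admissible and $\ell_h(u)=k$. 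Since the admissible classes already lie in $M_k$ by definition, the only thing to produce is each \emph{non-admissible} class $[\Owh{w}]$.

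To do this I would work in the GKM presentation of Theorem~\ref{thm_GKM_cohomology}, where each $[\Owh{w}]$ is recorded by its restrictions $[\Owh{w}]|_v$ to the fixed points $v\geq w$, and Tymoczko's dot action of $\S_n$ acts by simultaneously permuting the variables $t_1,\dots,t_n$ and the vertices of the GKM graph (see~\cite{Tymoczko}). The key technical input is a transition formula for $\sigma\cdot[\Owh{w}]$, expanded in the Schubert basis. The dot action is \emph{not} a permutation of this basis, but one expects a triangularity statement within the fixed cohomological degree: $\sigma\cdot[\Owh{w}]$ should equal a single leading class $[\Owh{w'}]$ plus a combination of classes indexed by permutations strictly larger than $w'$ in Bruhat order (equivalently, fixed points lying deeper in $\Ow{w'}\cap\Hess(s,h)$). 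Establishing this triangularity from the localization formulas for $[\Owh{w}]|_v$ is the heart of the argument.

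With such triangularity available, I would run a descending induction on Bruhat order (restricted to the level set $\{\ell_h = k\}$), starting from the top element $w_0$, which is automatically $h$-admissible and indexes the unique top-degree class. Given a non-admissible $w$, I would invoke the canonical correspondence $w\mapsto\widetilde w$ to an $h$-admissible permutation with $\Gamma(\Owh{w})\cong\Gamma(\Owh{\widetilde w})$ described in the introduction (Proposition~\ref{prop:prop_h-admissible}) to locate an admissible class of the same degree together with a group element $\sigma$ whose dot-translate has $[\Owh{w}]$ as leading term. Triangularity then writes $[\Owh{w}] = \sigma\cdot[\Owh{\widetilde w}] - (\text{a combination of strictly larger degree-}2k\text{ classes})$; by the inductive hypothesis the larger classes already lie in $M_k$, whence $[\Owh{w}]\in M_k$.

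The main obstacle is establishing the triangularity of the dot action against the Schubert basis and, relatedly, guaranteeing that the induction is well-founded: one must show both that the correction terms strictly increase in Bruhat order while remaining in the same level set $\{\ell_h=k\}$, and that the chosen $\sigma$ and $\widetilde w$ actually reproduce $[\Owh{w}]$ as the leading coefficient. Controlling these leading terms requires the precise combinatorics of the supports of the localized classes $[\Owh{w}]|_v$ together with the admissibility condition $w^{-1}(w(j)+1)\leq h(j)$, which is exactly what determines the fixed-point sets $\Owh{w}^T$. An alternative route that avoids the explicit transition formula would be to realize $H^{2k}$ as a quotient of an explicit (graded) permutation module whose distinguished generator has $\S_n$-orbit indexed by the $h$-admissible permutations, and to confirm the identification by matching graded Frobenius characteristics through the chromatic symmetric function description of the dot action; but converting such a numerical match into genuine module generation still demands the module-level comparison and so does not fully bypass the combinatorial core.
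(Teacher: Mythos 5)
Your reduction to the cell basis is fine: the affine paving gives that $\{[\Owh{w}]:\ell_h(w)=k\}$ is a $\C$-basis of $H^{2k}(\Hess(s,h);\C)$, so it suffices to produce each non-admissible class from the admissible ones. But the heart of your argument --- the ``triangularity'' of the dot action against this basis, which you yourself flag as the main obstacle --- is left entirely unproved, and as stated it is both more than you can easily get and less than you need. Nothing in the GKM/localization setup gives, for an arbitrary $\sigma$ and $w$, an expansion of $\sigma\cdot[\Owh{w}]$ with a single leading term plus Bruhat-larger corrections in the same $\ell_h$-level set; and your induction scheme is not well-founded as described, since for $k<d_h$ the level set $\{\ell_h=k\}$ has many Bruhat-maximal elements and its base case cannot be ``$w_0$''. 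So as written there is a genuine gap.

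What closes the gap is a statement \emph{stronger} than triangularity, which you already have in hand but do not exploit: Proposition~\ref{prop:prop_h-admissible} gives $\Owh{w}=u\bigl(\overline{\Owo{\widetilde w}\cap\Hess(u^{-1}su,h)}\bigr)$ with $u=w\widetilde w^{-1}$, and since the GKM graph of $\Hess(s,h)$ --- hence the (equivariant) class of each Hessenberg Schubert variety --- is independent of the choice of regular semisimple $s$, the dot action sends $[\Owh{\widetilde w}]$ \emph{exactly} to $[\Owh{w}]$, with no correction terms. This is how the paper argues (in the arbitrary-type setting of Section~\ref{sec:arbitrary}): the classes $\mathcal W(S,H)$ are left weak Bruhat intervals $[z_S,w_S]$ with admissible top element $w_S$, every other element of the class is obtained from $w_S$ by left multiplication by simple reflections staying inside the class, and Propositions~\ref{p.cells} and~\ref{p.action} transport the cell closure, hence the equivariant class, one simple reflection at a time; reducing modulo $(t_1,\dots,t_n)$ finishes the proof. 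All elements of one class share the same $\ell_h$, so the degree is automatically preserved. With this exact transport your descending induction and the control of correction terms become unnecessary; without it, your proposal does not yet constitute a proof.
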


A main geometric ingredient of the proof of Theorem \ref{thm:h-admissible} is that there is an equivalence relation on $\S_n$ so that the associated Hessenberg Schubert varieties in the same equivalence class share common features and the set of $h$-admissible permutations serve as a set of representatives of this equivalence relation.
The precise statement is presented below.
\begin{proposition}[cf.~\cite{CHL2,CHL,HP}]\label{prop:prop_h-admissible}
For a permutation $w\in\S_n$, there is a unique $h$-admissible permutation $\widetilde{w} {\geq w}$ satisfying that
\begin{equation}\label{eq:generator}
\widetilde{w}(i)<\widetilde{w}(j) \quad \text{ if and only if } \quad w(i)<w(j)\, \quad \text{ for all } i<j \text{ with } j\leq h(i)\,
\end{equation} for every $w\in \S_{n}$. {In this case}, we have
\begin{equation}\label{eq:Owh}
    \Ow{w,h}=u(\overline{\Owo{\widetilde{w}}\cap\Hess(u^{-1}su,h)}),
\end{equation}
where $u=w\widetilde{w}^{-1}$.
\end{proposition}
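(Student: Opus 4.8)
The plan is to split the statement into a purely combinatorial claim, the existence and uniqueness of $\widetilde{w}$, and a geometric claim, the identity~\eqref{eq:Owh}. For the combinatorial part I would read~\eqref{eq:generator} as an equivalence relation $\sim_h$ on $\S_n$, declaring $w\sim_h w'$ when $w$ and $w'$ induce the same relative order on every \emph{constrained pair} $(i,j)$ with $i<j\le h(i)$; these are exactly the pairs recording the edges of $\Gamma(\Hess(s,h))$, so $\ell_h$ is constant on each $\sim_h$-class. The key observation is that the $h$-admissible permutations are precisely the maxima of the $\sim_h$-classes: I would check that the value-transposition $s_v w$ (interchanging the values $v$ and $v+1$) alters the relative order of exactly the pair of positions $\{w^{-1}(v),w^{-1}(v+1)\}$ and of no other pair, so it stays in the $\sim_h$-class of $w$ iff that pair is unconstrained, and it strictly increases length iff $v$ precedes $v+1$. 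Since $w^{-1}(v+1)<w^{-1}(v)\le h(w^{-1}(v))$ holds automatically when $v$ follows $v+1$, this shows that $w$ is $h$-admissible if and only if no in-class value-transposition raises the length. Starting from $w$ and repeatedly applying length-raising in-class moves therefore terminates (length is bounded by $d_h$ on the class) at an $h$-admissible permutation $\widetilde{w}$ with $\widetilde{w}\sim_h w$; moreover each move is a cover in left weak order, which refines Bruhat order, so $\widetilde{w}\ge w$. For uniqueness I would argue that each $\sim_h$-class has a unique maximal element under left weak order, either by establishing confluence of the in-class moves (a diamond lemma) or, following~\cite{CHL2,CHL,HP}, by a direct reconstruction showing that the constrained pattern together with $h$-admissibility determines $w$.

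For the identity~\eqref{eq:Owh}, note that $u=w\widetilde{w}^{-1}$ is a permutation matrix, so left translation $\phi=u\cdot$ is an automorphism of $\flag(\C^n)$ that commutes with taking closures and, because $u\,\Hess(x,h)=\Hess(uxu^{-1},h)$, satisfies $\phi(\Hess(u^{-1}su,h))=\Hess(s,h)$. Thus~\eqref{eq:Owh} reduces, after applying $\overline{\phantom{x}}$, to the cell-level identity
$$u\bigl(\Owo{\widetilde{w}}\cap\Hess(u^{-1}su,h)\bigr)=u\Owo{\widetilde{w}}\cap\Hess(s,h)=\Owo{w}\cap\Hess(s,h),$$
where I use that $\phi$ carries the fixed point $\widetilde{w}$ to $u\widetilde{w}=w$. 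By~\eqref{eq:Hess_paving} both outer terms are irreducible affine cells, and they have the same dimension $d_h-\ell_h(\widetilde{w})=d_h-\ell_h(w)$ since $\ell_h$ is $\sim_h$-invariant; hence it suffices to match them.

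The step I expect to be the main obstacle is exactly this cell identity, because $\phi$ is only $T$-equivariant up to the conjugation $t\mapsto utu^{-1}$, and so a priori it carries the Bia{\l}ynicki--Birula decomposition of $\Hess(u^{-1}su,h)$ for the chosen generic one-parameter subgroup $\lambda_0$ to the decomposition of $\Hess(s,h)$ for the \emph{different} subgroup $u\lambda_0u^{-1}$. To close this gap I would proceed by explicit affine charts: parametrize $\Owo{w}\cap\Hess(s,h)$ so that its free coordinates are indexed by the constrained non-inversions of $w$, namely the edges $w\to w(i,j)$ with $i<j\le h(i)$ and $w(i)<w(j)$, and do the same for $\Owo{\widetilde{w}}\cap\Hess(u^{-1}su,h)$. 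Because $\widetilde{w}\sim_h w$ these index sets are literally the same set of pairs $(i,j)$, and I would verify that $\phi$ matches the two charts while the conjugation $s\mapsto u^{-1}su$ exactly compensates for the relabeling of eigenvalues in the Hessenberg condition. Conceptually this says that the discrepancy between $\lambda_0$ and $u\lambda_0u^{-1}$ is confined to the \emph{unconstrained} directions, which are absent from $\Hess(s,h)$ near $w$; the tangent spaces of the two cells at the common source $w$ therefore agree, which is the heuristic reason equality must hold. Turning this heuristic into the rigorous chart computation, and in particular confirming that no unconstrained coordinate survives the Hessenberg condition on either side, is where the real work lies.
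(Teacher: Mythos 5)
Your combinatorial half is essentially sound as a sketch: the observation that left multiplication $w\mapsto s_vw$ changes the relative order of exactly the position pair $\{w^{-1}(v),w^{-1}(v+1)\}$, and that $h$-admissibility fails at a value $v$ precisely when this pair is unconstrained and $v$ precedes $v+1$, is correct, and it is the same mechanism underlying the description $\mathcal W(S,H)=[z_S,w_S]$ as a weak Bruhat interval in Section~\ref{sec:arbitrary}. But uniqueness of $\widetilde{w}$ is genuinely left open: you offer two possible routes (confluence of the rewriting system, or a reconstruction argument) without carrying out either, and this is not a formality --- it is exactly the content of \cite[Proposition~3.8]{CHL2}, which the paper cites in place of a proof.

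The substantive gap is in the geometric half. Your displayed chain asserts $u\Owo{\widetilde{w}}\cap\Hess(s,h)=\Owo{w}\cap\Hess(s,h)$, justified by ``$\phi$ carries the fixed point $\widetilde{w}$ to $w$.'' That does not justify it: $u\Owo{\widetilde{w}}$ is \emph{not} $\Owo{w}$ (it has dimension $d-\ell(\widetilde{w})<d-\ell(w)$), so at best you obtain one inclusion after intersecting with $\Hess(s,h)$, and the reverse inclusion is precisely what your final paragraph defers to an unexecuted chart computation. Moreover, reducing to a cell-level identity is unnecessary: \eqref{eq:Owh} is a statement about closures, and the standard argument (this is \cite[Lemma~4.5]{CHL}; the paper reproves it in arbitrary type as Proposition~\ref{p.cells}) needs only one inclusion. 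Writing $u=s_{j_1}\cdots s_{j_k}$ as the product of your length-raising steps, for a single step one uses the BN-pair fact that $s_i\Owo{w'}\subseteq\Owo{s_iw'}$ whenever $\ell(s_iw')=\ell(w')-1$; intersecting with $\Hess(s,h)$ gives $s_i\bigl(\Owo{w'}\cap\Hess(s_i^{-1}ss_i,h)\bigr)\subseteq\Owo{s_iw'}\cap\Hess(s,h)$, and since both sides are irreducible affine cells of the same dimension $d_h-\ell_h(w')$ (your observation that $\ell_h$ is constant on the $\sim_h$-class is exactly what is needed here), their closures coincide. Iterating yields \eqref{eq:Owh}. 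This containment argument entirely bypasses the difficulty you identify about the Bia{\l}ynicki--Birula decomposition changing under conjugation of the one-parameter subgroup: one does not need the translated cell to equal the target cell, only to sit inside it with the right dimension.
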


    Note that for each $w\in\S_n$, the unique $h$-admissible permutation~$\widetilde{w}$ satisfying~\eqref{eq:generator} can be obtained by  \cite[Proposition~3.8]{CHL2}, and for such $\widetilde{w}$ and $u$, \eqref{eq:Owh} holds by ~\cite[Lemma~4.5]{CHL}. Alternatively, the same result follows from Lemma 2.11 and Proposition~3.14 in~\cite{HP}.

In what follows, we show that $\Ow{w,h}$ and $\Ow{w}$ have the same $T$-fixed point set only when $w\in\S_n$ is $h$-admissible.
\begin{proposition}\label{prop:fixed_points_admissible}
    Let $h \colon [n] \to [n]$ be a Hessenberg function. A permutation $w \in \S_n$ is $h$-admissible if and only if $\Owh{w}^T = [w,w_0]$.
\end{proposition}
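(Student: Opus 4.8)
The plan is to compare the two $T$-stable subvarieties $\Owh{w}=\overline{\Owo{w,h}}$ and $\Ow{w}\cap\Hess(s,h)$ through their Bia{\l}ynicki--Birula cells. First I would record the reductions. Since $s$ acts diagonally on every coordinate flag, each coordinate flag lies in $\Hess(s,h)$, so $(\Ow{w}\cap\Hess(s,h))^T=\Ow{w}^T=[w,w_0]$, and hence $\Owh{w}^T\subseteq[w,w_0]$. By \eqref{eq:cell-decomp-Ow} the variety $\Ow{w}\cap\Hess(s,h)$ is the disjoint union of the cells $\Owo{v,h}$ over $v\in[w,w_0]$, while $\Owh{w}$, being the closure of the single cell $\Owo{w,h}$ in the smooth variety $\Hess(s,h)$, is a union of such cells. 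Thus the statement $\Owh{w}^T=[w,w_0]$ is equivalent to $\Owh{w}=\Ow{w}\cap\Hess(s,h)$. Throughout I will use that $\dim\Owo{v,h}=d_h-\ell_h(v)$ and that the boundary $\overline{\Owo{w,h}}\setminus\Owo{w,h}$ is a union of cells of strictly smaller dimension, so that a cell $\Owo{v,h}$ with $v\neq w$ and $\ell_h(v)=\ell_h(w)$ cannot lie in $\Owh{w}$.

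I would prove the necessity of admissibility by contraposition. Suppose $w$ is not $h$-admissible and choose $j$ with $w(j)\le n-1$ and $p\colonequals w^{-1}(w(j)+1)>h(j)$. As $h(j)\ge j$, we have $j<p$ and $w(j)<w(p)=w(j)+1$, so $v\colonequals w(j,p)$ is a Bruhat cover of $w$; in particular $v\in[w,w_0]$. Being a cover obtained by swapping two consecutive values at positions $j<p$, the permutation $v$ creates exactly one new inversion, namely the pair $(j,p)$. Because $p>h(j)$, this pair is not among those counted by $\ell_h$, and no other inversion changes, so $\ell_h(v)=\ell_h(w)$. Hence $\dim\Owo{v,h}=\dim\Owo{w,h}$ with $v\neq w$, which forces $\Owo{v,h}\not\subseteq\overline{\Owo{w,h}}=\Owh{w}$. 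Therefore $v\in[w,w_0]\setminus\Owh{w}^T$, and $\Owh{w}^T\subsetneq[w,w_0]$.

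For the converse the key is a reachability lemma: if $\Owo{v',h}\subseteq\Owh{w}$ and there is an edge $v'\to v$ of $\Gamma(\Hess(s,h))$ with $v>v'$ in Bruhat order, then $\Owo{v,h}\subseteq\Owh{w}$. To see this, note that $\Owo{v',h}$ is smooth and open in $\Owh{v'}\colonequals\overline{\Owo{v',h}}$, and its tangent space at $v'$ has weights $\{t_{v'(i)}-t_{v'(j)}\mid i<j\le h(i),\ v'(i)<v'(j)\}$, i.e.\ exactly the Bruhat-increasing Hessenberg edges at $v'$. The curve $\overline{\mathcal O_{v',v}}$ of the edge $v'\to v$ thus has its tangent direction inside $T_{v'}\Owo{v',h}$, so it meets the open cell and lies in $\Owh{v'}$; consequently $v\in\Owh{v'}\subseteq\Owh{w}$, the last inclusion because $\Owh{w}$ is closed and contains $\Owo{v',h}$. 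Iterating, $\Owh{w}^T$ contains every vertex reachable from $w$ by a Bruhat-increasing path in $\Gamma(\Hess(s,h))$.

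It then remains to prove the combinatorial statement that when $w$ is $h$-admissible, \emph{every} $v\in[w,w_0]$ is reachable from $w$ by a Bruhat-increasing path in the Hessenberg graph; together with the reachability lemma this yields $\Owh{w}^T=[w,w_0]$. I would establish this by downward induction on $\ell(v)$, the inductive step amounting to producing, for each $v\in(w,w_0]$, a Hessenberg descent $v\to v'$ (a pair $i<j\le h(i)$ with $v(i)>v(j)$) whose target $v'=v(i,j)$ still satisfies $v'\ge w$. The existence of such an admissibility-controlled descent that remains inside the interval $[w,w_0]$ is the main obstacle: an arbitrary lower cover of $v$ need not be a Hessenberg edge, so one must use the defining inequality $w^{-1}(w(k)+1)\le h(k)$ to steer the descent, and this is precisely where $h$-admissibility is consumed. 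I expect this to follow from the pattern/descent combinatorics of $h$-admissible permutations developed in \cite{CHL2,CHL,HP}; once it is in place, the two directions combine to give the claimed equivalence.
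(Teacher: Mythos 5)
Your argument rests on an implicit assumption that is false, and it is refuted by an example in this very paper. In your opening reduction you assert that $\Owh{w}$, ``being the closure of the single cell $\Owo{w,h}$,'' is a union of Bia{\l}ynicki--Birula cells of $\Hess(s,h)$, so that $v\in\Owh{w}^T$ if and only if $\Owo{v,h}\subseteq\Owh{w}$, and hence that $\Owh{w}^T=[w,w_0]$ is equivalent to $\Owh{w}=\Ow{w}\cap\Hess(s,h)$. The BB decomposition of $\Hess(s,h)$ is not a stratification, and closures of its cells are not unions of cells. Concretely, for $h=(3,3,4,4)$ and $w=2134$ (Example~\ref{example_reducible}), $w$ is $h$-admissible so $\Owh{2134}^T=[2134,w_0]\ni 2341$, yet $\Owo{2341,h}\not\subseteq\Owh{2134}$: both have dimension $3$ and $\Owh{2134}$ is irreducible, so containment would force $\Owh{2134}=\Owh{2341}$, which fails since $\Ow{2134}\cap\Hess(s,h)=\Owh{2134}\cup\Owh{2341}$ is reducible. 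The same example kills the claimed equivalence with $\Owh{w}=\Ow{w}\cap\Hess(s,h)$. This gap propagates to both directions of your proof. In the necessity direction you correctly produce a cover $v=w(j,p)$ with $\ell_h(v)=\ell_h(w)$ and correctly conclude $\Owo{v,h}\not\subseteq\Owh{w}$, but that does not show the \emph{point} $v$ is absent from $\Owh{w}$, which is what $v\notin\Owh{w}^T$ means. In the sufficiency direction, your reachability lemma is sound for a single step (the weight $t_{v'(i)}-t_{v'(j)}$ appears in $T_{v'}\Owo{v',h}$, and since the tangent weights at $v'$ in $G/B$ are pairwise non-proportional, the corresponding $T$-curve in $\overline{\Owo{v',h}}$ must be $\overline{\mathcal O_{v',v}}$, whence $v\in\Owh{v'}$), but its conclusion is only that the point $v$ lies in $\Owh{w}$, not that the cell $\Owo{v,h}$ does --- which is exactly the hypothesis you need to iterate. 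The example above shows this stronger conclusion is genuinely false, so the induction does not close. Finally, the combinatorial step that every $v\in[w,w_0]$ is reachable from an $h$-admissible $w$ by a Bruhat-increasing path in $\Gamma(\Hess(s,h))$ is only ``expected'' in your write-up, not proved (it is available as \cite[Proposition~2.11]{CHL} together with \cite[Lemma~3.2]{CHP}).

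For comparison, the paper avoids all of this: the forward direction is quoted from \cite[Theorem~4.8]{HP}, and the converse is a two-line cardinality argument --- if $w$ is not $h$-admissible, Proposition~\ref{prop:prop_h-admissible} gives $\Owh{w}=u(\overline{\Owo{\widetilde{w}}\cap\Hess(u^{-1}su,h)})$ with $\widetilde{w}>w$, so $\Owh{w}^T=u[\widetilde{w},w_0]$ has strictly smaller cardinality than $[w,w_0]$. If you want to salvage your approach, you would need a criterion for membership of a fixed \emph{point} in $\overline{\Owo{w,h}}$ that does not pass through containment of whole cells; this is essentially the content of the Harada--Precup analysis and is where the real work lies.
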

\begin{proof}
    By \cite[Theorem 4.8]{HP}, if $w$ is $h$-admissible, then $\Ow{w,h}^T=[w,w_0].$  To prove the converse, suppose that $w$ is not  $h$-admissible but $\Owh{w}^T = [w,w_0]$.
    By Proposition~\ref{prop:prop_h-admissible}, there exists a unique $h$-admissible permutation $\widetilde{w} > w$ such that $\Ow{w,h}=u(\overline{\Owo{\widetilde{w}}\cap\Hess(u^{-1}su,h)})$, where $u=w\widetilde{w}^{-1}$.
    Then we have $\Ow{w,h}^T=u(\overline{\Owo{\widetilde{w}}\cap\Hess(u^{-1}su,h)})^T$, that is, $[w,w_0]=u[\widetilde{w},w_0]$ (cf.~\cite[Corollary 4.12]{HP}). However, since $w < \widetilde{w}$, it follows that $[\widetilde{w}, w_0]\subsetneq [w,w_0]$. Therefore, two intervals $[w,w_0]$ and $[\widetilde{w},w_0]$ have different cardinalities, which contradicts $[w,w_0]=u[\widetilde{w},w_0]$.
\end{proof}
It follows from Proposition~\ref{prop:fixed_points_admissible} that $\Ow{w}\cap \Hess(s,h)$ is reducible if $w$ is not an $h$-admissible permutation because $$(\Ow{w}\cap\Hess(s,h))^T=\Ow{w}^T=[w,w_0].$$ However, the converse does not hold. See~\cite[Example~4.16]{HP}.

Recall that $\Gamma(\Owh{w})$ is the graph whose vertices are $\Owh{w}^T$ and edges are $T$-stable curves in~$\Owh{w}$.
\begin{corollary} \label{cor. graph}
    For a permutation $w\in\S_n$, let $\widetilde{w}$ be the corresponding $h$-admissible permutation. Then the graphs $\Gamma(\Owh{w})$ and $\Gamma(\Owh{\widetilde{w}})$ are isomorphic as unlabeled graphs.
\end{corollary}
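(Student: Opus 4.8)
The plan is to combine the explicit description of $\Owh{w}$ in Proposition~\ref{prop:prop_h-admissible} with the observation that left translation by a permutation matrix is a biregular automorphism of $\flag(\C^n)$ that normalizes the torus. Let $\widetilde w$ be the $h$-admissible permutation attached to $w$, and set $u\coloneqq w\widetilde w^{-1}$ and $s'\coloneqq u^{-1}su$; note that $s'$ is again a regular semisimple \emph{diagonal} matrix, since it is obtained from $s$ by permuting its diagonal entries. By Proposition~\ref{prop:prop_h-admissible} we have $\Owh{w}=u\bigl(\overline{\Owo{\widetilde w}\cap\Hess(s',h)}\bigr)$, so, writing $Z\coloneqq\overline{\Owo{\widetilde w}\cap\Hess(s',h)}$, it suffices to prove two isomorphisms of unlabeled graphs: first $\Gamma(\Owh{w})\cong\Gamma(Z)$, and then $\Gamma(Z)\cong\Gamma(\Owh{\widetilde w})$, where $\Owh{\widetilde w}=\overline{\Owo{\widetilde w}\cap\Hess(s,h)}$ is built from the original $s$.

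For the first isomorphism I would argue as follows. View $u\in\S_n$ as a permutation matrix; it lies in $N_G(T)$, so conjugation $c_u\colon t\mapsto utu^{-1}$ is an automorphism of $T$. The left-translation map $\phi_u\colon gB\mapsto ugB$ is a biregular automorphism of $\flag(\C^n)$, and a direct computation gives $\phi_u(t\cdot gB)=c_u(t)\cdot\phi_u(gB)$, that is, $\phi_u$ is equivariant with respect to $c_u$. Hence $\phi_u$ sends $T$-fixed points to $T$-fixed points and one-dimensional $T$-orbits to one-dimensional $T$-orbits, and for any $T$-stable subvariety $V$ it restricts to an isomorphism $\Gamma(V)\xrightarrow{\sim}\Gamma(\phi_u(V))$ of unlabeled graphs (the labels are merely twisted by $c_u$). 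Taking $V=Z$ and using $\phi_u(Z)=u\cdot Z=\Owh{w}$ from Proposition~\ref{prop:prop_h-admissible} yields $\Gamma(Z)\cong\Gamma(\Owh{w})$.

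For the second isomorphism, observe that both $s$ and $s'$ are regular semisimple diagonal matrices, so by Proposition~\ref{prop:Tymoczko} the ambient GKM graphs of $\Hess(s,h)$ and $\Hess(s',h)$ are literally the \emph{same} labeled graph $\Gamma_h$ on $\S_n$ (its edges and labels make no reference to $s$, and all coordinate flags lie in every regular semisimple Hessenberg variety). Thus $\Gamma(Z)$ and $\Gamma(\Owh{\widetilde w})$ are both subgraphs of one fixed graph $\Gamma_h$, and comparing them reduces to comparing subsets of $\Gamma_h$. Since $\widetilde w$ is $h$-admissible, Proposition~\ref{prop:fixed_points_admissible} (which concerns $h$-admissibility and therefore applies verbatim to $s'$) identifies the vertex set of each with $[\widetilde w,w_0]$, so the two graphs share the same vertices. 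It then remains to match edges, i.e.\ to show that a $T$-stable curve of $\Gamma_h$ is contained in $\Owh{\widetilde w}$ if and only if the corresponding curve is contained in $Z$; equivalently, that the set of $T$-stable curves lying in the Hessenberg Schubert variety $\overline{\Owo{\widetilde w,h}}$ depends only on $h$ and $\widetilde w$, not on the chosen regular semisimple element.

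I expect this last point—the independence of the \emph{edge set} of $\Gamma(\Owh{\widetilde w})$ from the choice of $s$—to be the main obstacle, and the difficulty is genuine rather than cosmetic. As the discussion following Proposition~\ref{prop:fixed_points_admissible} shows, even for $h$-admissible $\widetilde w$ the variety $\Owh{\widetilde w}$ need not be a union of Bia{\l}ynicki--Birula cells: it may contain a fixed point $v\in[\widetilde w,w_0]$ while failing to contain the whole cell $\Owo{v,h}$, so $\Gamma(\Owh{\widetilde w})$ can be a \emph{proper} subgraph of the subgraph of $\Gamma_h$ induced on $[\widetilde w,w_0]$, and the edges cannot simply be read off from the vertices. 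I would therefore isolate the required $s$-independence as a separate lemma and prove it either from the explicit curve-incidence description of Hessenberg Schubert varieties in~\cite{HP,CHL}, or by a deformation argument: over the irreducible space of regular semisimple diagonal matrices the discrete datum $\Gamma(\Owh{\widetilde w})$ is locally constant, hence constant. Granting this lemma, composing the two graph isomorphisms gives $\Gamma(\Owh{w})\cong\Gamma(Z)\cong\Gamma(\Owh{\widetilde w})$ as unlabeled graphs, which is the claim.
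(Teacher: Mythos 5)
Your plan follows the same route as the paper's own proof: Proposition~\ref{prop:prop_h-admissible} writes $\Omega_{w,h}=u\cdot Z$ with $Z=\overline{\Omega_{\widetilde w}^{\circ}\cap\Hess(u^{-1}su,h)}$, and left translation by the permutation matrix $u\in N_G(T)$ is $c_u$-equivariant, hence carries $T$-fixed points to $T$-fixed points and one-dimensional $T$-orbits to one-dimensional $T$-orbits, giving an unlabeled-graph isomorphism $\Gamma(Z)\cong\Gamma(\Omega_{w,h})$. That half of your argument is correct and is essentially all the paper writes down: its proof simply asserts that $z$ and $v$ are joined in $\Gamma(\Omega_{w,h})$ if and only if $u^{-1}z$ and $u^{-1}v$ are joined in $\Gamma(\Omega_{\widetilde w,h})$, with Proposition~\ref{prop:prop_h-admissible} as the implicit justification.

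The second half is where your proposal stops short of a proof, and you have put your finger on exactly the point that the paper's one-line argument passes over silently: after translating by $u^{-1}$ one is comparing $\Gamma(Z)$, built from $\Hess(u^{-1}su,h)$, with $\Gamma(\Omega_{\widetilde w,h})$, built from $\Hess(s,h)$. The vertex sets agree by Proposition~\ref{prop:fixed_points_admissible} applied to $s'=u^{-1}su$, but since $\Gamma(\Omega_{\widetilde w,h})$ need not be the induced subgraph of $\Gamma(\Hess(s,h))$ on $[\widetilde w,w_0]$, agreement of vertices does not force agreement of edges. Your proposal does not close this gap: you name two candidate strategies and carry out neither. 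The deformation argument in particular is not yet an argument — over the space of regular semisimple diagonal matrices you would need the incidence condition ``the $T$-stable curve joining $z$ and $z(i,j)$ is contained in $\overline{\Omega_{\widetilde w}^{\circ}\cap\Hess(s,h)}$'' to be both open and closed in $s$, and neither is automatic, since containment in the closure of a cell is precisely the kind of condition that can jump in a family. So as written the proof has a missing lemma. To complete it you should either extract an explicit, $s$-independent description of the $T$-stable curves lying in $\Omega_{\widetilde w,h}$ from \cite{HP} or \cite{CHL}, or make the flatness/semicontinuity hypotheses of your specialization argument precise; alternatively, note that this is the same unproved identification the paper itself relies on, so an honest write-up should isolate it as a lemma in any case.
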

\begin{proof}
    Let $u=w\widetilde{w}^{-1}$. Then $\Owh{\widetilde{w}}^T=[\widetilde{w},w_0]$ and $\Owh{w}^T=u[\widetilde{w},w_0]$. Note that two vertices $z$ and $v$ are connected by an edge in $\Gamma(\Ow{w,h})$ if and only if $u^{-1}z$ and $u^{-1}v$ are connected by an edge in $\Gamma(\Ow{\widetilde{w},h})$. Therefore, $u$ is an isomorphism from $\Gamma(\Owh{\widetilde{w}})$ to $\Gamma(\Owh{w})$.\footnote{In graph theory, an isomorphism of graphs $G$ and $H$ is an edge-preserving bijection between the vertex sets of $G$ and $H$.}
\end{proof}

\section{Irreducibility and smoothness}\label{sec:Irr}
Let $X$ be a smooth GKM variety with respect to a $T$-action on $X$, and let $Y$ be a $T$-stable smooth subvariety of $X$.
In this section, we are going to consider the irreducibility of the intersection of $Y$ with the closure of a Bia{\l}ynicki-Birula cell of $X$. More precisely, we will interpret the irreducibility considering a certain subgraph of the GKM graph of $X$ in Proposition~\ref{prop_irr_general}. Moreover, we will provide a proof of Theorem~\ref{thm_main}. Indeed, we will restrict our attention to the case when $X = G/B$, and provide a sufficient condition for the smoothness of the closure of a Bia{\l}ynicki-Birula cell of a smooth subvariety $Y$ of $X = G/B$.

Let $\mathcal{S}$ be the set of $T$-fixed points in $X$, and denote by $\Omega_w^{\circ}$ the Bia{\l}ynicki-Birula (minus) cell of $X$ indexed by a fixed point $w \in \mathcal{S}$, that is, we have the Bia{\l}ynicki-Birula decomposition of $X$ as follows:
    \[
    X = \bigsqcup_{w \in \mathcal{S}} \Omega_w^{\circ}.
    \]
Since $X$ is smooth, each cell $\Omega_w^{\circ}$ is an affine cell. We denote by $\Omega_w$ the closure of $\Omega_w^{\circ}$. We note that the Bia{\l}ynicki-Birula decomposition is not necessarily a stratification.\footnote{For more details on the Bia{\l}ynicki-Birula decomposition, see~\cite{BB}.}

Let $Y$ be a $T$-stable subvariety of $X$.
The subvariety $Y$ is not necessarily a GKM variety, but we are going to observe $Y$ using its \emph{skeleton}.
\begin{definition}
Let $X$ be a smooth GKM variety with respect to a $T$-action on $X$, and let $Y$ be a $T$-stable subvariety of~$X$.
Define a (undirected) graph $\Gamma(Y)$ by
\begin{itemize}
\item the set of vertices of $\Gamma(Y)$ is $Y^T \subset X^T$;
\item $\{v,w\}$ is an edge if and only if there exists a one-dimensional $T$-orbit closure in $Y$ whose fixed points are $v$ and $w$.
\end{itemize}
\end{definition}

\begin{lemma}[{\cite[Lemma in Section~2]{Carrell94}}]\label{lem:number_of_edges}
    Let $X$ be a subvariety of a projective space $\mathbb{C}P^n$ which is stable under a torus~$T$ in $\GL_n(\mathbb{C})$. Suppose that $X^T$ is finite. Then, for each fixed point $x\in X^T$, the number of one-dimensional orbit closures containing~$x$ is at least $\dim_\mathbb{C}(X)$.
\end{lemma}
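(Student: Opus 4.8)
The plan is to reduce the statement to a local problem at $x$ and then count $T$-stable curves through $x$ by organizing them along a generic one-parameter subgroup. First I would pass to an irreducible component of $X$ through $x$ (the assertion is local at $x$), so that we may assume $X$ is irreducible of dimension $d$, and work in a $T$-stable affine chart $U\cong\mathbb{A}^{N}$ with $x$ at the origin and $T$ acting linearly and diagonally. The germ at $x$ of a one-dimensional orbit closure $\overline{\mathcal{O}_{x,y}}$ is an irreducible $T$-stable curve through $x$ lying in $X\cap U$; since distinct orbits are disjoint away from the finite fixed locus, distinct orbit closures yield distinct germs, so it suffices to exhibit $d$ distinct $T$-stable curve germs at $x$. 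The tangent line at $x$ of such a curve is a weight line in the Zariski tangent space $T_{x}X$ carrying a nonzero $T$-weight (as $T$ acts nontrivially on the curve), and $\dim_{\mathbb{C}}T_{x}X\ge d$.

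\textbf{The smooth case.} When $X$ is smooth at $x$ this already finishes cleanly. The $T$-action near the fixed point is linearizable, so formally $X$ is $T$-equivariantly identified with $T_{x}X=\bigoplus_{i}\mathbb{C}v_{i}$, a sum of weight lines of weights $\chi_{i}$. Finiteness of $X^{T}$ forces $x$ to be isolated, hence $(T_{x}X)^{T}=0$ and every $\chi_{i}\ne 0$. Each coordinate axis $\mathbb{C}v_{i}$ is then a one-dimensional $T$-orbit closure through $x$, and these $\dim_{\mathbb{C}}T_{x}X=d$ axes are pairwise distinct, producing at least $d$ curves.

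\textbf{The general case.} For possibly singular $X$ linearization is unavailable, and this is where the real work lies. I would fix a generic one-parameter subgroup $\lambda\colon\mathbb{C}^{\ast}\to T$ with $X^{\lambda}=X^{T}$, and split the curves through $x$ into attracting ones (the $T$-curves through $x$ lying in the cell $X^{+}_{x}=\{p:\lim_{t\to 0}\lambda(t)p=x\}$) and repelling ones (those in $X^{-}_{x}$); every $T$-curve through $x$ falls into exactly one class, according to the sign of the $\lambda$-weight on its tangent line. The cell $X^{+}_{x}$ is an affine $T$-variety contracted by $\lambda$ to the vertex $x$, so its coordinate ring is non-negatively graded with degree-zero part $\mathbb{C}$; taking $\operatorname{Proj}$ gives a projective $T$-variety of dimension $\dim X^{+}_{x}-1$ whose $T$-fixed points correspond exactly to the $T$-stable rays of $X^{+}_{x}$ through $x$, hence to attracting $T$-curves. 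The engine is then the fact that a projective $T$-variety of dimension $e$ with finitely many fixed points has at least $e+1$ of them, which yields at least $\dim X^{+}_{x}$ attracting curves, and symmetrically at least $\dim X^{-}_{x}$ repelling curves.

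\textbf{Conclusion and main obstacle.} Combining the two families gives at least $\dim X^{+}_{x}+\dim X^{-}_{x}$ curves through $x$, and it remains to know $\dim X^{+}_{x}+\dim X^{-}_{x}\ge\dim_{\mathbb{C}}X$, which I would extract from the Bia{\l}ynicki--Birula theory of the decomposition attached to $\lambda$ (with equality in the smooth case). I expect the genuine difficulty to be precisely this singular bookkeeping: ensuring the cells $X^{\pm}_{x}$ are affine of the expected dimension, that the ``$e+1$ fixed points'' count is applicable to $\operatorname{Proj}$, and that the cell dimensions add up to at least $\dim X$. This last step is where properness of the $\lambda$-limits and an appropriate normality hypothesis enter; a naive count can fail for badly non-normal $X$ (e.g.\ a cone over a nodal rational curve), so the general argument must either invoke normality/rational smoothness or carry out the additivity estimate directly, and that is the step I would expect to demand the most care.
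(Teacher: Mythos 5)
First, note that the paper does not prove this lemma at all: it is quoted verbatim from \cite[Lemma in Section~2]{Carrell94}, so the only comparison available is with Carrell's argument and the surrounding literature. Your strategy is in fact the standard one (and close to Carrell's): localize at $x$ in a $T$-stable affine chart, split the $T$-curves through $x$ into attracting and repelling ones for a generic one-parameter subgroup $\lambda$ with $X^{\lambda}=X^{T}$, identify the attracting curves with the $T$-fixed points of the projectivization of the attracting cone $X^{+}_{x}$, and invoke the bound that a projective $T$-variety of dimension $e$ with finitely many fixed points has at least $e+1$ of them. Those reductions are essentially sound (one caveat: since finiteness of one-dimensional orbits is \emph{not} assumed, the fixed-point count on $\operatorname{Proj}$ must be phrased as ``at least $e+1$ or infinitely many,'' which is harmless).

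The genuine gap is exactly the step you flag and then do not carry out: the inequality $\dim X^{+}_{x}+\dim X^{-}_{x}\ge \dim X$ for an arbitrary irreducible projective $T$-variety. Everything in your argument funnels into this estimate, and without it the proof produces no lower bound at all (a priori one only gets $\dim(X\cap A^{+})\ge \dim X-\operatorname{codim}A^{+}$ from the affine dimension theorem, which is far too weak). Your suggested escape routes do not work here: the lemma carries no normality hypothesis, and in this paper it is applied to varieties such as $\Ow{w}\cap Y$ and $\Ow{w}\cap\Hess(s,h)$ for which normality is not known, so one cannot ``invoke normality/rational smoothness.'' (Incidentally, your proposed pathology --- a cone over a nodal rational curve --- cannot actually occur: the closure of a one-dimensional $T$-orbit in $\mathbb{P}(V)$ always has two \emph{distinct} fixed points, namely the projectivized extreme weight components, so $T$-stable ``loops'' are impossible when $X^{T}$ is finite; but ruling out that example does not establish the dimension inequality.) The inequality is true and is essentially a theorem of Bia{\l}ynicki-Birula on plus/minus decompositions of complete varieties, which is what Carrell's proof rests on; as submitted, however, your write-up asserts the decisive estimate rather than proving or correctly citing it, so it is an outline with its central step missing rather than a proof.
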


By definition, $\Gamma(X)$ is the underlying graph of the GKM graph of $X$, and we have $\Gamma(Y) \subset \Gamma(X)$. Notice that the graph $\Gamma(Y)$ is not necessarily an induced subgraph of $\Gamma(X)$.

\begin{example}
Let $X$ be a flag manifold $\flag(\C^3)$. Regarding the left multiplication of a maximal torus $T \cong (\Cstar)^3$, the flag manifold $X$ is a GKM manifold. There are six $T$-fixed points indexed by the elements of the symmetric group $\mathfrak{S}_3$. Recall from Proposition~\ref{prop:Tymoczko}, the GKM graph of $X$ has the following edges:
\[
\{(w, w(i,j)) \mid w \in \mathfrak{S}_3,~ 1 \leq i < j \leq 3\}.
\]

Consider the permutohedral variety $Y$ of dimension $2$, which is a torus orbit closure in $X$ satisfying $Y^T = X^T$. The set of $T$-stable curves can be expressed by
\[
\{(w, w(i,i+1)) \mid w \in \mathfrak{S}_3, ~i=1,2\}.
\]
Accordingly, $\Gamma(Y) \subsetneq \Gamma(X)$ while $V(\Gamma(Y)) = V(\Gamma(X))$.
\end{example}

For $w \in \mathcal{S}$, consider the intersection of $Y$ with the affine cell $\Owo{w}$ and the closure of the intersection:
\[
\OwoY{w} \colonequals \Owo{w} \cap Y, \quad
\OwY{w} \colonequals \overline{\OwoY{w}}.
\]
If $Y$ is a smooth $T$-stable subvariety of $X$, then the set $\{\OwoY{w}\}_{w \in \mathcal{S}}$ of intersections forms the Bia{\l}ynicki-Birula decomposition of $Y$ and each element is an affine cell.

\subsection{Irreducibility}
The intersection $\Ow{w} \cap Y$ is again a $T$-stable subvariety of $X$ which could be reducible and non-equidimensional in general (see Example~\ref{example_reducible_non_equidimentional}).
Considering the graph~$\Gamma(\Ow{w} \cap Y)$, we can determine the irreducibility of the intersection $\Ow{w} \cap Y$.

\begin{lemma}\label{lemma_irreducible}
Let $X$ be a smooth GKM variety with respect to a $T$-action on $X$ whose Bia{\l}ynicki-Birula decomposition
$\{\Omega_w^{\circ}\}_{w \in \mathcal{S}}$ forms a stratification.
Let $Y$ be a $T$-stable smooth subvariety of $X$.
Then any irreducible component of $\Ow{w} \cap Y$ is of the form $\OwY{v}$ for some fixed point $v$ in $\mathcal{S}$, and one of which is $\OwY{w}$.
\end{lemma}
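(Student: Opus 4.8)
The plan is to exploit the stratification hypothesis to decompose $\Ow{w} \cap Y$ into the Bia{\l}ynicki-Birula cells of $Y$ and then read off the irreducible components by passing to closures. First I would use that $\{\Owo{v}\}_{v\in\mathcal S}$ is a stratification, so that the closure of each cell is a union of cells; writing $\succeq$ for the associated closure order ($v \succeq w \iff \Owo{v} \subseteq \Ow{w}$, a genuine partial order, which is the Bruhat order $v \geq w$ in the flag-variety case), this gives $\Ow{w} = \bigsqcup_{v \succeq w} \Owo{v}$. Intersecting with the $T$-stable subvariety $Y$ and using $\OwoY{v} = \Owo{v} \cap Y$ yields the set-theoretic decomposition
\[
\Ow{w} \cap Y = \bigsqcup_{v \succeq w} \OwoY{v}.
\]
Since $Y$ is smooth and $T$-stable, the family $\{\OwoY{v}\}$ is precisely the Bia{\l}ynicki-Birula decomposition of $Y$ into affine cells, so each $\OwoY{v} \cong \C^{k_v}$ is irreducible.

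Next I would pass to closures. Because $\Owo{v} \subseteq \Ow{w}$ forces $\Ow{v} = \overline{\Owo{v}} \subseteq \Ow{w}$, and $\Ow{w} \cap Y$ is closed, we get $\OwY{v} = \overline{\OwoY{v}} \subseteq \Ow{v} \cap Y \subseteq \Ow{w} \cap Y$ for every $v \succeq w$. Combined with the cell decomposition above, this gives
\[
\Ow{w} \cap Y = \bigcup_{v \succeq w} \OwY{v},
\]
a finite union of irreducible closed subsets. It is standard that the irreducible components of a finite union of irreducible closed subsets are exactly its inclusion-maximal members: discarding every $\OwY{v}$ that is contained in some $\OwY{v'}$ with $v' \neq v$ leaves an irredundant decomposition whose pieces are precisely the irreducible components. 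In particular, every irreducible component of $\Ow{w} \cap Y$ is of the form $\OwY{v}$ for some $v \in \mathcal{S}$, which proves the first assertion.

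It remains to show that $\OwY{w}$ itself is a component, i.e.\ that it is inclusion-maximal among the $\OwY{v}$; here we assume $w \in Y^T$ so that $\OwY{w} \neq \emptyset$ (in the Hessenberg application $Y^T = X^T$, so this holds automatically). This is the step I expect to require the most care, since it is where the special position of the ``top'' cell $\Owo{w}$ inside $\Ow{w}$ must be used. Suppose $\OwY{w} \subseteq \OwY{v}$ for some $v \succeq w$ with $v \neq w$. Then $\Owo{w} \cap Y = \OwoY{w} \subseteq \OwY{v} \subseteq \Ow{v}$. But $\Ow{v} = \bigsqcup_{u \succeq v} \Owo{u}$ does not contain the cell $\Owo{w}$: as $v \neq w$ and $v \succeq w$, antisymmetry of $\succeq$ gives $w \not\succeq v$, and since the cells are pairwise disjoint we conclude $\Owo{w} \cap \Ow{v} = \emptyset$. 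Hence $\Owo{w} \cap Y = \emptyset$, contradicting $w \in Y^T$. Therefore $\OwY{w}$ is maximal and thus an irreducible component of $\Ow{w} \cap Y$, which completes the proof.
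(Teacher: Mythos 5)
Your proof is correct and follows essentially the same route as the paper's: decompose $\Ow{w}$ into cells via the stratification, intersect with $Y$ to obtain affine (hence irreducible) Bia{\l}ynicki-Birula cells of $Y$, pass to closures to write $\Ow{w}\cap Y$ as a finite union of the $\OwY{v}$, and then show $\OwY{w}$ is inclusion-maximal. The only cosmetic differences are that you establish maximality via disjointness of the cells and antisymmetry of the closure order, where the paper invokes the minimality of $w$ among the $T$-fixed points, and that you state explicitly the (harmless) hypothesis $w \in Y^T$ which the paper uses implicitly.
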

To prove Lemma~\ref{lemma_irreducible},
we introduce a partial order on $\mathcal{S}$ as follows.
Suppose that the Bia{\l}ynicki-Birula decomposition $\{\Owo{w}\}_{w \in \mathcal{S}}$ of $X$ forms a stratification. Then we have
\begin{equation}\label{eq_Aw}
\Omega_w = \bigcup_{v \in \Omega_w^T} \Omega_v^{\circ},
\end{equation}
where $\Omega_w^T \subset \mathcal{S}$ is the set of $T$-fixed points of $\Omega_w$. This provides a well-defined partial order on~$\mathcal{S}$:
\[
w \leq v \stackrel{\textrm{def}}{\iff} v \in \Omega_w^T.
\]
\begin{proof}[Proof of Lemma~\ref{lemma_irreducible}]
By intersecting both sides of~\eqref{eq_Aw} with $Y$, we obtain
\[
\Ow{w} \cap Y = \bigcup_{v \in \Omega_w^T} \OwoY{v}.
\]
Each set on the right-hand side is a Bia{\l}ynicki-Birula cell of $Y$, which is an affine cell since $Y$ is smooth. Therefore, each irreducible component of the intersection $\Ow{w} \cap Y$ should be of the form $\OwY{v}$ for some $v \in \Omega_w^T$.

We notice that each cell $\OwoY{u}$ has a unique $T$-fixed point $u$, which is the minimal element in $\OwY{u}^T$ with respect to the partial order $\leq$ on $\mathcal{S}$.
Since
\[
w \in (\Ow{w} \cap Y)^T = (\Omega_w)^T \cap Y^T \subset \Omega_w^T
\]
and $w$ is the minimal element in $\Omega_w^T$, the closure $\OwY{w}$ is an irreducible component of $\Ow{w} \cap Y$. Therefore, we obtain $w \in C_w$.
\end{proof}

For a smooth GKM variety $X$ and a $T$-stable subvariety $Y$ of $X$, we denote by $E(Y,v)$ the set of edges in the graph $\Gamma(Y)$ containing $v \in Y^T$.
\begin{proposition}\label{prop_irr_general}
Let $X$ be a smooth GKM variety with respect to a $T$-action on $X$ whose Bia{\l}ynicki-Birula decomposition
$\{\Omega_w^{\circ}\}_{w \in \mathcal{S}}$ forms a stratification.
Let $Y$ be a $T$-stable smooth subvariety of $X$.
For  $w \in \mathcal{S}$, suppose that $\OwY{w}^T = (\Ow{w} \cap Y)^T$ holds.
For $p \in (\Ow{w} \cap Y)^T$,
if
there exists a path $\gamma$ in $\Gamma(\Ow{w} \cap Y)$ connecting $w$ and $p$ such that
\begin{equation}\label{condition_irreducible_prop}
\left \vert E(\Ow{w} \cap Y, v) \right \vert = \dim \OwoY{w} \quad \text{ for all } v \in V(\gamma),
\end{equation}
then $\Ow{w} \cap Y$ is irreducible at $p$. Here, $V(\gamma)$ is the set of vertices of $\gamma$.
\end{proposition}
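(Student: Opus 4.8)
\emph{The plan} is to prove the sharper statement that $\OwY{w}$ is the \emph{unique} irreducible component of $Z \colonequals \Ow{w} \cap Y$ passing through $p$, which is what ``irreducible at $p$'' should mean. Write $d \colonequals \dim \OwoY{w} = \dim \OwY{w}$. By Lemma~\ref{lemma_irreducible} every irreducible component of $Z$ has the form $\OwY{v}$ for some $v \in \mathcal{S}$ with $v \geq w$, and $\OwY{w}$ is one of them. I would run an induction along the path $\gamma = (w = v_0, v_1, \dots, v_k = p)$, proving for each $i$ that (i) $\OwY{w}$ passes through $v_i$; (ii) every $T$-stable curve of $Z$ through $v_i$ is contained in $\OwY{w}$; and (iii) $\OwY{w}$ is the only component of $Z$ through $v_i$. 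Evaluating (iii) at $v_k = p$ is then the desired conclusion.

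For the base case $v_0 = w$: any component $\OwY{v}$ with $w \in \OwY{v}^T$ satisfies $v \leq w$ (since $w \in \OwY{v}^T \subset \Omega_v^T$) and $v \geq w$ (since $\OwY v$ is a component), hence $v = w$; this gives (iii), and (i) is then trivial. Because $w$ lies in the open cell $\OwoY{w} \cong \mathbb{A}^d$, the variety $\OwY{w}$ is smooth at $w$ and has exactly $d$ curves through $w$; by (iii) and Lemma~\ref{lem:number_of_edges} these account for all curves of $Z$ at $w$, giving (ii). For the inductive step from $v_{i-1}$ to $v_i$, the connecting edge is a $T$-stable curve $C \subseteq Z$ through $v_{i-1}$; as $C$ is irreducible it lies in a single component of $Z$, which must be $\OwY{w}$ by the inductive hypothesis (iii) at $v_{i-1}$, so $C \subseteq \OwY{w}$ and its other endpoint $v_i$ lies in $\OwY{w}^T$, proving (i). Since $\OwY{w}$ has dimension $d$ and passes through $v_i$, Lemma~\ref{lem:number_of_edges} produces at least $d$ curves of $\OwY{w}$ through $v_i$; the hypothesis $|E(Z, v_i)| = d$ forces equality, so all $d$ curves of $Z$ at $v_i$ lie in $\OwY{w}$, proving (ii), and the same count shows no component of $Z$ through $v_i$ can have dimension exceeding $d$, whence $\dim_{v_i} Z = d$.

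The remaining point, uniqueness (iii) at $v_i$, is the step I expect to be the main obstacle. The difficulty is that the equality $|E(Z, v_i)| = \dim_{v_i} Z = d$ does \emph{not} by itself force smoothness (a cuspidal curve already violates such a converse), so one cannot simply invoke a smoothness criterion such as Theorem~\ref{thm_main} without circularity, since irreducibility is logically prior to it. The approach I would take is to linearize: as $Y$ is smooth, a $T$-stable neighborhood of $v_i$ in $Y$ is equivariantly isomorphic to $\mathbb{A}^{\dim Y}$ with $T$ acting diagonally through pairwise distinct weights (the relevant roots of $G$), and $Z$ becomes a $T$-stable closed subscheme whose $T$-stable curves through $v_i$ are exactly the $d$ coordinate axes contained in $\OwY{w}$. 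The structural input special to $G/B$ is that distinct $T$-stable curves through a fixed point carry distinct weights, so that every $T$-stable curve of a hypothetical second component $\OwY{v''}$ through $v_i$ would already lie in $\OwY{w}$, i.e. in $\OwY{w} \cap \OwY{v''}$.

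I would then combine this with $\dim_{v_i} Z = d$ and the fact that $v_i$ is joined to the already-irreducible point $v_{i-1}$ through the curve $C \subseteq \OwY{w}$, to argue that the germ of $Z$ at $v_i$ cannot split off a second $T$-stable branch transverse to $\OwY{w}$ without either raising the local dimension or contributing a curve of a new weight, both of which are excluded. Making this last implication rigorous, essentially an unibranch statement for $T$-stable germs in a smooth ambient space where the curve count matches the local dimension, is the crux of the argument; once it is in place, induction delivers (iii) at every $v_i$, and in particular at $p$.
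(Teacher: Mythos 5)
Your first two invariants are sound and reproduce the paper's opening step: the hypothesis $\OwY{w}^T = (\Ow{w}\cap Y)^T$ together with the count $\lvert E(\Ow{w}\cap Y, v)\rvert = \dim\OwoY{w}$ and Lemma~\ref{lem:number_of_edges} forces $E(\OwY{w},v) = E(\Ow{w}\cap Y, v)$ for every $v\in V(\gamma)$, which is exactly~\eqref{eq_two_graphs_are_same}. But the proposal is not a proof: the uniqueness step (iii), which you yourself flag as ``the crux,'' is precisely the content of the proposition, and the local linearization you sketch cannot deliver it. Knowing that every $T$-stable curve of $Z = \Ow{w}\cap Y$ through $v_i$ lies in $\OwY{w}$ and that $\dim_{v_i} Z = d$ does not rule out a second component through $v_i$: already in $\mathbb{A}^3$ with pairwise linearly independent weights $a,b,c$ satisfying $a+b=2c$ (e.g.\ $a=(2,0)$, $b=(0,2)$, $c=(1,1)$), the plane $\{z=0\}$ and the $T$-stable quadric cone $\{xy=z^2\}$ are both irreducible of dimension $2$, both contain exactly the two axes as their $T$-stable curves through the origin, and their union is a reducible $T$-stable germ whose curve count equals its local dimension, with all curves lying in the first component. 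So the ``unibranch statement for $T$-stable germs where the curve count matches the local dimension'' that your argument hinges on is false in general, and nothing in your sketch invokes a global feature of the situation that would exclude such a configuration.

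The paper closes this gap globally, via equivariant cohomology rather than local geometry. Supposing $Z$ reducible, it writes $[Z]_T = [\OwY{w}]_T + \sum_{u\in C_w\setminus\{w\}}[\OwY{u}]_T$ using Lemma~\ref{lemma_irreducible}, and then applies Lemma~\ref{lemma2} --- a rigidity statement for GKM classes of two subvarieties whose edge sets agree along a connected subgraph of matching degree --- to conclude $[\OwY{w}]_T(v) = c\,[Z]_T(v)$ for all $v\in V(\gamma)$ and some constant $c$. Evaluating at the minimal vertex $w$, where every component other than $\OwY{w}$ restricts to zero, forces $c=1$; hence $[\OwY{u}]_T(v)=0$ for all $v\in V(\gamma)$ and all $u\in C_w\setminus\{w\}$, and since the restriction of $[\OwY{u}]_T$ to a fixed point lying on $\OwY{u}$ is nonzero, no other component meets the path, in particular none passes through $p$. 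This cohomological comparison along $\gamma$ is the missing ingredient that must replace your step (iii); the rest of your induction is then unnecessary, since the conclusion is obtained at all vertices of $\gamma$ simultaneously.
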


Since $\Owo{w} \cap Y$ is open in $\Ow{w} \cap Y$, the intersection $\Ow{w} \cap Y$ is smooth at every point of $\Owo{w} \cap Y$; thus, so is at $w$. This implies that we have
\begin{equation}\label{eq_dim_and_number_of_edges}
\lvert E(\Ow{w} \cap Y, w) \rvert = \dim \OwoY{w}.
\end{equation}

\begin{proposition} \label{prop_irr}
Let $X$ be a smooth GKM variety with respect to a $T$-action on $X$ whose Bia{\l}ynicki-Birula decomposition $\{\Owo{w}\}_{w \in \mathcal{S}}$ forms a stratification.
Let $Y$ be a $T$-stable smooth subvariety of $X$.
For $w \in \mathcal{S}$, if
the graph $\Gamma(\Ow{w} \cap Y)$ is regular and connected, then $\Ow{w} \cap Y $ is irreducible and is equal to $\OwY{w}$.
\end{proposition}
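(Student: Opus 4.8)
The plan is to deduce irreducibility from Proposition~\ref{prop_irr_general}, whose essential input is the equality of fixed-point sets $\OwY{w}^T = (\Ow{w} \cap Y)^T$, and then to promote the resulting pointwise irreducibility to a global statement. Write $r$ for the common degree of the regular graph $\Gamma(\Ow{w} \cap Y)$. Since $\OwoY{w}$ is open in $\Ow{w} \cap Y$ and contains $w$, the intersection is smooth at $w$, so by \eqref{eq_dim_and_number_of_edges} we have $r = \lvert E(\Ow{w}\cap Y, w)\rvert = \dim \OwoY{w}$.

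First I would prove $\OwY{w}^T = (\Ow{w} \cap Y)^T$. The variety $\OwY{w} = \overline{\OwoY{w}}$ is the closure of an affine cell of the smooth variety $Y$, hence irreducible of dimension $\dim \OwoY{w} = r$, and it is a $T$-stable closed subvariety of $X$. Applying Carrell's lemma (Lemma~\ref{lem:number_of_edges}) to $\OwY{w}$, every fixed point $x \in \OwY{w}^T$ lies on at least $\dim \OwY{w} = r$ one-dimensional $T$-orbit closures contained in $\OwY{w}$; these are precisely the edges of $\Gamma(\OwY{w})$ at $x$, so $x$ has degree at least $r$ in $\Gamma(\OwY{w})$. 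As $\Gamma(\OwY{w})$ is a subgraph of the $r$-regular graph $\Gamma(\Ow{w}\cap Y)$, the degree of $x$ in $\Gamma(\OwY{w})$ is also at most $r$; hence it equals $r$ and the two graphs share the same edges at $x$. In particular every neighbor of $x$ in $\Gamma(\Ow{w} \cap Y)$ already lies in $\OwY{w}^T$. Thus $\OwY{w}^T$ is closed under passing to neighbors in the connected graph $\Gamma(\Ow{w} \cap Y)$, and since it contains $w$, connectedness forces $\OwY{w}^T = (\Ow{w} \cap Y)^T$. This is the step I expect to carry the real weight, since it is exactly where the combinatorial regularity hypothesis is converted, through Carrell's lemma, into the geometric fact that all fixed points and $T$-stable curves of the intersection already belong to the single component $\OwY{w}$.

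With this equality the hypotheses of Proposition~\ref{prop_irr_general} hold. Given any $p \in (\Ow{w} \cap Y)^T$, connectedness of $\Gamma(\Ow{w}\cap Y)$ yields a path $\gamma$ from $w$ to $p$, and regularity guarantees $\lvert E(\Ow{w}\cap Y, v)\rvert = r = \dim\OwoY{w}$ for every $v \in V(\gamma)$, so condition~\eqref{condition_irreducible_prop} is met and $\Ow{w}\cap Y$ is irreducible at $p$. Finally I would globalize. By Lemma~\ref{lemma_irreducible}, $\OwY{w}$ is an irreducible component of $\Ow{w}\cap Y$. If there were another component $Z$, then $Z$ is a $T$-stable projective variety, so by the Borel fixed point theorem it contains a $T$-fixed point $p_Z \in (\Ow{w}\cap Y)^T = \OwY{w}^T \subseteq \OwY{w}$; hence $p_Z$ would lie on the two distinct components $Z$ and $\OwY{w}$, contradicting irreducibility of $\Ow{w}\cap Y$ at $p_Z$. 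Therefore $\OwY{w}$ is the unique component and $\Ow{w}\cap Y = \OwY{w}$ is irreducible, as claimed.
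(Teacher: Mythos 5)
Your proof is correct, and the decisive step is handled by a genuinely different argument than the one in the paper. Both proofs reduce the proposition to the equality $\OwY{w}^T = (\Ow{w} \cap Y)^T$ followed by an appeal to Proposition~\ref{prop_irr_general}, and both ultimately rest on the same inequality sandwich $\dim \OwY{w} \leq \lvert E(\OwY{w},x)\rvert \leq \lvert E(\Ow{w}\cap Y,x)\rvert$ coming from Lemma~\ref{lem:number_of_edges}, regularity, and~\eqref{eq_dim_and_number_of_edges}. But where you argue directly --- Carrell's lemma forces $\Gamma(\OwY{w})$ to be edge-saturated inside the $r$-regular graph at every vertex of $\OwY{w}^T$, so that set is closed under adjacency and connectedness propagates it to all of $(\Ow{w}\cap Y)^T$ --- the paper argues by contrapositive: it assumes the fixed-point sets differ, produces via Lemma~\ref{lemma_irreducible} a second irreducible component $\OwY{v}$ meeting $\OwY{w}$, and then extracts, at a minimal or maximal element $u$ of $\OwY{v}^T\cap\OwY{w}^T$, an edge of $E(\OwY{v},u)$ not in $E(\OwY{w},u)=E(\Ow{w}\cap Y,u)$, contradicting regularity. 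Your route has the advantage of making the use of connectedness explicit and of avoiding both the case split on $u_{\min}$ versus $u_{\max}$ and the slightly delicate reduction to the case $\OwY{w}\cap\OwY{v}\neq\emptyset$; the paper's route instead pinpoints the vertex at which an extra component would have to attach. Your final globalization (Borel fixed point theorem plus pointwise irreducibility at every fixed point) spells out what the paper leaves as ``follows immediately,'' and is sound.
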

\begin{proof}
It suffices to show that if the graph $\Gamma(\Ow{w} \cap Y)$ is regular, then $(\Ow{w} \cap Y)^T = \OwY{w}^T$, from which the desired conclusion follows immediately by Proposition~\ref{prop_irr_general}.
We prove it
by considering the contrapositive statement. Suppose that $(\Ow{w} \cap Y)^T \neq \OwY{w}^T$. Since
$\OwY{w} \subset \Ow{w} \cap Y$ by Lemma~\ref{lemma_irreducible}, we have
\[
\OwY{w}^T \subsetneq (\Ow{w} \cap Y)^T.
\]
Therefore, again by Lemma~\ref{lemma_irreducible}, we have at least one more irreducible component, say $\OwY{v}$, other than $\OwY{w}$ such that $\OwY{v}^T \neq \OwY{w}^T$. Without loss of generality, we may assume that $\OwY{w} \cap \OwY{v} \neq \emptyset$.  For any  $u \in \OwY{v}^T \cap \OwY{w}^T$, we have
\begin{equation}\label{eq_comparing_edges_OwY_OvY}
E(\OwY{w}, u) = E(\Ow{w} \cap Y, u) \quad \text{and}\quad
E(\OwY{v}, u) \subset E(\Ow{w} \cap Y, u).
\end{equation}
Here, we notice that the first equality holds because we have $E(\OwY{w}, u) \subset E(\Ow{w} \cap Y, u)$, $|E(\Ow{w} \cap Y, u)| = \dim \OwY{w}$ by~\eqref{eq_dim_and_number_of_edges} and the regularity, and $|E(\OwY{w}, u)| \geq \dim \OwY{w}$ by Lemma~\ref{lem:number_of_edges}. Indeed, we have
\[
\dim \OwY{w} \leq |E(\OwY{w}, u)|  \leq |E(\Ow{w} \cap Y, u)| = \dim \OwY{w}.
\]

Take $u_{\text{min}}, u_{\text{max}} \in \OwY{v}^T \cap \OwY{w}^T$ as a minimal and maximal element in the intersection, respectively. There are two possibilities:
$u_{\text{min}} \neq v$ or $u_{\text{min}} = v$.
If $u = u_{\text{min}} \neq v$, then because of the supposition $\OwY{v}^T \neq \OwY{w}^T$, there is an edge $\{u, x\} \in E(\OwY{v}, u) \setminus E(\OwY{w}, u)$. This yields a contradiction on the regularity of the graph $\Gamma(\Ow{w} \cap Y)$ at $u$ (see~\eqref{eq_comparing_edges_OwY_OvY}).

If $u_{\min} = v$, then for $u = u_{\max}$, there is an edge $\{u, x\} \in E(\OwY{v}, u) \setminus E(\OwY{w}, u)$ because of the supposition $\OwY{v}^T \neq \OwY{w}^T$. This yields a contradiction on the regularity of the graph $\Gamma(\Ow{w} \cap Y)$ at $u$ (see~\eqref{eq_comparing_edges_OwY_OvY}). This proves that $(\Ow{w} \cap Y)^T = \OwY{w}^T$.
\end{proof}

To prove Proposition~\ref{prop_irr_general}, we prepare one lemma.
\begin{lemma}\label{lemma2}
Let $X$ be a smooth GKM variety with respect to a $T$-action on $X$. Let $Y$ and $Z$ be $T$-stable subvarieties of~$X$ of dimension $d$. Suppose that for some connected  subgraph $\Gamma_1$ of $\Gamma(Y)$, we have
\[
E(Y, v)  =  E(Z, v) \quad\text{and}\quad \left\vert E(Y, v) \right\vert = d \quad \text{ for all }v \in \Gamma_1.
\]
Then there exists a constant $c \in \Cstar$ such that
\[
[Y]_T(v) = c [Z]_T(v) \quad \text{ for all }v \in \Gamma_1,
\]
where $[Y]_T$ and $[Z]_T$ are the equivariant cohomology classes in $H^*_T(X)$ corresponding to $Y$ and $Z$, respectively.
\end{lemma}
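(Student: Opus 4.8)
The plan is to compute each restriction $[Y]_T(v)$ and $[Z]_T(v)$ at a fixed point $v \in \Gamma_1$ explicitly, up to a nonzero scalar, to observe that the two agree up to a scalar $r_v$ at each such vertex, and then to use the GKM compatibility relations together with the connectedness of $\Gamma_1$ to show that $r_v$ is independent of $v$. The resulting common value is the desired constant $c$.

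\emph{Step 1 (local restriction).} Fix $v \in \Gamma_1$. I would first invoke the standard fact that the restriction of an equivariant fundamental class to an isolated fixed point equals the equivariant multidegree of the tangent cone: $[Y]_T(v) = [C_v Y]_T$, where $C_v Y \subseteq T_v X$ is a $T$-stable cone of dimension $d$ sitting inside the weight-space decomposition $T_v X = \bigoplus_{e \in E(X,v)} V_e$, with $V_e$ the weight line of weight $\alpha(e)$. Since $X$ is a smooth GKM variety, the $|E(X,v)| = \dim X$ edge weights at $v$ are pairwise non-proportional. The edges of $\Gamma(Y)$ at $v$ are precisely those $e$ for which the axis $V_e$ lies in $C_v Y$ (the unique $T$-stable curve tangent to $V_e$ lies in $Y$ exactly when $V_e \subseteq C_v Y$), so $|E(Y,v)| = d$ means that exactly $\dim X - d$ of the axes are \emph{not} contained in $C_v Y$. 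For each such $e \in E(X,v) \setminus E(Y,v)$, the multidegree divisibility property — a weight divides the multidegree of a cone when the corresponding axis is not contained in it, obtained by projecting $C_v Y$ off the axis $V_e$ and using the generic finiteness of that projection — gives $\alpha(e) \mid [Y]_T(v)$. As $[Y]_T(v)$ is homogeneous of degree $\dim X - d$, the $\alpha(e)$ are pairwise non-proportional, and there are exactly $\dim X - d$ of them, I conclude
\[
[Y]_T(v) = c_v \prod_{e \in E(X,v) \setminus E(Y,v)} \alpha(e)
\]
for some $c_v \in \Cstar$ (nonzero because $[Y]_T(v) \neq 0$ for $v \in Y$). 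The identical argument for $Z$, using $|E(Z,v)| = |E(Y,v)| = d = \dim Z$, gives $[Z]_T(v) = c'_v \prod_{e \in E(X,v) \setminus E(Z,v)} \alpha(e)$ with $c'_v \in \Cstar$. Since $E(Y,v) = E(Z,v)$, the two products coincide, so $[Y]_T(v) = r_v\,[Z]_T(v)$ with $r_v \colonequals c_v/c'_v \in \Cstar$.

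\emph{Step 2 (global constancy).} It remains to show $r_v$ is constant on $\Gamma_1$; by connectedness it suffices to treat a single edge $e = \{v,w\}$ of $\Gamma_1$. The GKM compatibility conditions for $[Y]_T$ and $[Z]_T$ give $\alpha(e) \mid [Y]_T(v) - [Y]_T(w)$ and $\alpha(e) \mid [Z]_T(v) - [Z]_T(w)$. Substituting $[Y]_T(v) = r_v[Z]_T(v)$ and $[Y]_T(w) = r_w[Z]_T(w)$ and writing $r_v[Z]_T(v) - r_w[Z]_T(w) = (r_v - r_w)[Z]_T(v) + r_w\big([Z]_T(v) - [Z]_T(w)\big)$, one finds $\alpha(e) \mid (r_v - r_w)\,[Z]_T(v)$. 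But $e \in E(Z,v)$, so $\alpha(e)$ is not among the factors of $[Z]_T(v)$ found in Step 1 and is non-proportional to each of them; hence the irreducible linear form $\alpha(e)$ does not divide $[Z]_T(v)$. As $r_v - r_w$ is a constant, this forces $r_v = r_w$. Connectedness of $\Gamma_1$ then yields a single constant $c \in \Cstar$ with $[Y]_T(v) = c\,[Z]_T(v)$ for all $v \in \Gamma_1$.

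\emph{Main obstacle.} The delicate point is Step 1, and specifically the multidegree divisibility in the genuinely small-torus GKM setting: here $C_v Y$ is only $T$-stable (not stable under the full coordinate torus) and the weights $\alpha(e)$ are linearly dependent, so the standard toric multidegree argument does not apply verbatim. It is exactly the pairwise non-proportionality coming from the GKM condition that rescues the divisibility through the projection-off-an-axis computation. I should also emphasize that $Y$ need not be smooth at $v$ even when $|E(Y,v)| = \dim Y$, which is precisely why the constant $c$ can legitimately differ from $1$ (it records the multiplicities/degrees of the relevant tangent cones). In the case $X = G/B$ relevant to Theorem~\ref{thm_main}, one may instead invoke the Carrell--Kuttler smoothness criterion~\cite{CK} to conclude that $Y$ and $Z$ are smooth at each $v \in \Gamma_1$, in which case $[Y]_T(v)$ and $[Z]_T(v)$ are literally the products of the normal weights and one obtains $c = 1$.
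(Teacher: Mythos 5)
Your argument is correct and follows essentially the same route as the paper's: both proofs first establish that the restrictions $[Y]_T(v)$ and $[Z]_T(v)$ are nonzero scalar multiples of the product $\prod_{e \in E(X,v)\setminus E(Y,v)}\alpha(e)$ of the complementary weights, and then show the ratio is locally constant by the identical difference-and-divisibility computation along an edge of $\Gamma_1$, using that $\alpha(e) \nmid [Z]_T(v)$ when $e \in E(Z,v)$ together with connectedness. The only difference is cosmetic: you justify the first step in detail via tangent cones and multidegrees, whereas the paper asserts it in one line.
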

\begin{proof}
Let $\{f_v\}_{v \in \mathcal{S}}$ and $\{g_v\}_{v \in \mathcal{S}}$ be the elements of $H^\ast(\Gamma; \mathbb{C})$ corresponding to $[Y]_T$ and $[Z]_T$, respectively.
We first notice that because of the regularity assumption, the smoothness of $X$, and the compatibility condition, there exist constants $\{c_v\}_{v \in \mathcal{S}}$ such that $g_v = c_v f_v$ holds for $v \in \Gamma_1$. Indeed, both $g_v$ and $f_v$ are certain constant multiples of the product $\prod_{\{v,w\} \in E(X, v)\setminus E(Y, v)} \alpha(v \to w)$.

Let $v_1$ and $v_2$ be elements in $\Gamma_1$ such that $\{ v_1, v_2\} \in \Gamma_1$. Let $\chi$ be the weight corresponding to $e$, that is, $\chi = \alpha(v_1 \to v_2)$.
Considering the differences  $g_{v_1} - g_{v_2}$ and $f_{v_1} - f_{v_2}$, we have
\begin{equation}\label{eq_g_and_f_difference}
g_{v_1} - g_{v_2} = c_{v_1}f_{v_1} - c_{v_2} f_{v_2}
= c_{v_1}(f_{v_1} - f_{v_2}) + (c_{v_1} - c_{v_2}) f_{v_2}.
\end{equation}
Because of the compatibility condition, we have $\chi \mid (f_{v_1} - f_{v_2})$ and $\chi \mid (g_{v_1} - g_{v_2})$. Combining this with~\eqref{eq_g_and_f_difference}, we obtain $\chi \mid (c_{v_1} - c_{v_2})f_{v_2}$.
On the other hand, weights corresponding to $E(X,v_2)$ are pairwisely linearly independent and $f_{v_2}$ does not have a factor of $\chi$ since $\{v_1,v_2\} \in E(Y, v_2)$. Accordingly, we get $\chi \nmid f_{v_2}$, and therefore, $c_{v_1} = c_{v_2}$. Since the graph~$\Gamma_1$ is connected, the result follows.
\end{proof}

\begin{proof}[Proof of Proposition~\ref{prop_irr_general}]
Let $p \in (\Ow{w} \cap Y)^T$. Suppose that there exists a path $\gamma$ in $\Gamma(\Ow{w} \cap Y)$ connecting $w$ and $p$ satisfying the condition~\eqref{condition_irreducible_prop} in the statement.
We first claim that two graphs $\Gamma(\OwY{w})$ and $\Gamma(\Ow{w} \cap Y)$ have the same edges for $v \in V(\gamma)$, that is,
\begin{equation}\label{eq_two_graphs_are_same}
E(\OwY{w}, v) = E(\Ow{w} \cap Y, v) \quad \text{ for } v \in V(\gamma).
\end{equation}
By definition, we have $\OwoY{w} \subset \Ow{w} \cap Y$. This implies $\Gamma(\OwY{w}) \subset \Gamma(\Ow{w} \cap Y)$. Additionally, by the assumption $\OwY{w}^T = (\Ow{w} \cap Y)^T$, we obtain
\[
V (\Gamma(\OwY{w}))
= V(\Gamma(\Ow{w} \cap Y)), \quad
E(\Gamma(\OwY{w}))
\subset E(\Gamma(\Ow{w} \cap Y)).
\]
Therefore, by the assumption on the numbers of edges of the graph $\Gamma(\Ow{w} \cap Y)$ and Lemma~\ref{lem:number_of_edges}, we obtain the desired claim~\eqref{eq_two_graphs_are_same}.

Now we claim that $\Ow{w} \cap Y$ is irreducible at $p$.
Assume on the contrary that the intersection $\Ow{w} \cap Y$ is reducible. Then, by Lemma~\ref{lemma_irreducible}, there exists a subset $C_w$ of $\mathcal{S}$ having more than one element such that
\begin{equation}\label{eq_Omega_w_cap_Y_irreducible_cpts}
\Ow{w} \cap Y = \bigcup_{u \in C_w} \OwY{u},
\end{equation}
where each $\OwY{u}$  for $u \in C_w$ is an irreducible component in $\Ow{w} \cap Y$. Moreover, we have $w \in C_w$ by Lemma~\ref{lemma_irreducible}.

Considering the GKM cohomology classes corresponding to~\eqref{eq_Omega_w_cap_Y_irreducible_cpts}, we obtain
\begin{equation}\label{eq_compare_GKM_classes}
[\Ow{w} \cap Y]_T
= \left[\OwY{w} \right]_T + \sum_{u \in C_w \setminus \{w\}} \left[\OwY{u} \right]_T.
\end{equation}
We notice that by~\eqref{eq_two_graphs_are_same}, two graphs $\Gamma(\OwY{w})$ and $\Gamma(\Ow{w} \cap Y)$ have the same degree at $v \in V(\gamma)$.
Therefore, we get
\[
\left[ \OwY{w} \right]_T(v) = c \left[ \Ow{w} \cap Y \right]_T(v) \quad \text{ for all } v \in V(\gamma)
\]
for some constant $c \in \Cstar$ by Lemma~\ref{lemma2}.

By comparing both sides in~\eqref{eq_compare_GKM_classes} at $w$, we obtain
\[
\begin{split}
\left[ \Ow{w} \cap Y \right]_T (w)
&= \left[ \OwY{w} \right]_T(w)
+ \sum_{u \in C_w \setminus \{w\}} \left[\OwY{u} \right]_T(w) \\
&= c \left[ \Ow{w} \cap Y \right]_T(w) + 0.
\end{split}
\]
Here, the terms in the sum running over $C_w \setminus \{w\}$ on the right-hand side vanish since any element in $\OwY{u}^T$ is greater than $w$.
This proves that $c = 1$ holds. Therefore, we get $\left[ \OwY{w} \right]_T(v) = \left[ \Ow{w} \cap Y \right]_T(v)$ for $v \in V(\gamma)$. Combining this with~\eqref{eq_compare_GKM_classes}, we obtain
\[
\left[\OwY{u} \right]_T(v) = 0\quad \text{ for any } v \in V(\gamma).
\]
Therefore, $v \notin C_w$ for $v \in V(\gamma)$ since $\left[\OwY{u} \right]_T(u) \neq 0$ holds for any $u \in \mathcal{S}$. Hence the result follows.
\end{proof}

\begin{remark}\label{rmk_irreducible}
    The regularity condition on the graph $\Ow{w} \cap Y$ is not a necessary condition providing the irreducibility of $\Ow{w} \cap Y$. For instance, if $X = Y = G/B$, then $\Ow{w} \cap Y = \Omega_w$ and so is irreducible for any $w \in X^T$. However, $\Omega_w$ could be singular, and the graph corresponding to $\Ow{w} \cap Y = \Omega_w$ is not regular.
\end{remark}

\subsection{Smoothness}
To prove Theorem~\ref{thm_main}, we use the result of Carrell and Kuttler~\cite{CK} on determining the smoothness of $T$-stable subvariety of $G/B$.
We say a curve $C$ in a $T$-variety~$Z$ is \emph{good} if $C$ is a curve of the form $C = \overline{T \cdot z}$, where $z$ is a smooth point of $Z \setminus Z^T$.

\begin{proposition}[{\cite[Corollary~6.2]{CK}}] \label{prop_good curve}
Let $G$ be a simply-laced simple algebraic group over~$\mathbb{C}$ and $B$ a Borel subgroup of $G$. Let $Z$ be a $T$-stable subvariety of $G/B$ having dimension at least two and satisfying $|E(Z,x)| = \dim Z$ at $x \in Z^T$. Then $Z$ is nonsingular at $x$ if and only if $E(Z,x)$ contains at least two good curves.
\end{proposition}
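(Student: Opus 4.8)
The plan is to prove the two implications separately within a common local framework, placing almost all of the work in the converse (``two good curves $\Rightarrow$ nonsingular''). Since $x$ is a $T$-fixed point, the opposite big cell provides a $T$-equivariant open neighborhood of $x$ in $G/B$ that is $T$-equivariantly isomorphic to the tangent space $T_xZ$'s ambient space $T_x(G/B)$, on which $T$ acts with weights the roots attached to $x$. The decisive consequence of the simply-laced hypothesis is that these weights are pairwise distinct and each weight space of $T_x(G/B)$ is one-dimensional; hence $T_xZ\subseteq T_x(G/B)$ has all weight spaces of dimension at most one. Each edge in $E(Z,x)$ is a one-dimensional $T$-orbit closure whose tangent line at $x$ has a distinct weight, so $T_xZ$ contains $|E(Z,x)|=\dim Z$ distinct weights and $\dim T_xZ\geq \dim Z$. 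Therefore $Z$ is nonsingular at $x$ if and only if $\dim T_xZ=\dim Z$, that is, if and only if no \emph{extra weight} occurs in $T_xZ$ beyond the $\dim Z$ weights coming from $E(Z,x)$. This reformulation is the backbone of both directions.

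For the forward implication, assume $Z$ is nonsingular at $x$. Smoothness is an open condition, so some Zariski-open $U\ni x$ in $Z$ consists of smooth points. For each $C\in E(Z,x)$, the dense $T$-orbit $C\setminus Z^T$ of the one-dimensional orbit closure $C$ meets $U$; picking $z\in U\cap(C\setminus Z^T)$ we have $\overline{T\cdot z}=C$ with $z$ a smooth point of $Z\setminus Z^T$, so $C$ is good. As $\dim Z\geq 2$ gives $|E(Z,x)|=\dim Z\geq 2$, we obtain at least two good curves.

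For the converse, suppose $E(Z,x)$ contains two good curves and, arguing by contradiction via the reduction above, that $T_xZ$ carries an extra weight $\psi$. The engine is a propagation statement attached to a good curve. Let $C$ be good with root $\beta$, and consider the fibration $\pi_\beta\colon G/B\to G/P_\beta$ whose fibers are $\mathbb{P}^1$'s; $C$ is the fiber over $\pi_\beta(x)$, its two $T$-fixed points being $x$ and $y=s_\beta\cdot x$. The existence of a smooth point of $Z$ on $C$ lets one compare the weight structures of $T_xZ$ and $T_yZ$ along this fiber: analyzing the equations of $Z$ near a smooth point of $C$ and degenerating to $x$ shows that the occupied weights of $T_xZ$ must organize into $\beta$-strings, and that an extra weight $\psi$ can survive the constraint imposed by $C$ only if it is aligned with $\beta$ through a short root string. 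Here the simply-laced hypothesis enters twice: all weight multiplicities are one, and $\beta$-strings have length at most two, so no relation of the form $\beta_1+\beta_2=2\beta_3$ among curve-weights is available to shelter a nonreduced tangent cone.

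The two good curves then finish the argument. Their roots $\beta_1\neq\beta_2$ give two independent propagation constraints; the first confines any extra weight $\psi$ to short $\beta_1$-strings and the second to short $\beta_2$-strings, and in a simply-laced root system these two conditions cannot be met simultaneously by a single $\psi$. Hence no extra weight exists, $\dim T_xZ=\dim Z$, and $Z$ is nonsingular at $x$. I expect the crux, and the main obstacle, to be the propagation lemma of the previous paragraph: turning the qualitative hypothesis ``$C$ carries a smooth point'' into the precise statement that the weights of $T_xZ$ form short $\beta$-strings, and verifying that exactly the simply-laced conditions make \emph{two} such constraints sufficient to exclude every extra weight. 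Instructively, this sufficiency fails without the simply-laced assumption: a $T$-stable quadric cone $\{uv=w^2\}$, whose defining weight relation $\beta_1+\beta_2=2\beta_3$ forces two distinct roots to become equal in the simply-laced case and is therefore unavailable there (but occurs, e.g., in type $B_2$ via $-(e_1+e_2)+\bigl(-(e_1-e_2)\bigr)=2(-e_1)$), carries two good curves through its vertex yet is singular there. Establishing that no such obstruction survives in the simply-laced setting—and checking that the reductions to the affine chart and to the $\mathbb{P}^1$-fibration $\pi_\beta$ are $T$-equivariantly valid—are the delicate points; the remainder is bookkeeping in the weight lattice.
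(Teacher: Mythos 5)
You should first note that the paper does \emph{not} prove this proposition: it is imported verbatim from Carrell--Kuttler \cite[Corollary~6.2]{CK}, so the standard your proposal must meet is that of a self-contained proof of their theorem. Under that standard, your forward direction is correct and essentially trivial (openness of the smooth locus plus the fact that each curve in $E(Z,x)$ meets any open neighborhood of $x$ in its dense orbit), and your reduction of smoothness at $x$ to ``no extra weight in $T_xZ$'' is also sound --- it is exactly the inequality $\dim Z\leq |E(Z,x)|\leq \dim T_xZ$ that the present paper quotes from \cite[pp.~357--358]{CK}. One local misstatement: the pairwise distinctness of the weights of $T_x(G/B)$ and the one-dimensionality of its weight spaces are \emph{not} consequences of the simply-laced hypothesis; they hold in every type, since the root spaces of $\mathfrak{g}$ are one-dimensional and the roots occurring at a fixed point are distinct. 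What simply-lacedness actually buys is the absence of relations such as $\beta_1+\beta_2=2\beta_3$ among distinct roots and the shortness of root strings --- which you do say correctly later, so this is a misattribution rather than a fatal error.

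The genuine gap is that the entire content of the theorem is concentrated in your ``propagation lemma,'' which you never prove and explicitly flag as deferred. Saying that a smooth point on a good curve $C$ with weight $\beta$ forces the weights of $T_xZ$ ``to organize into short $\beta$-strings'' is a restatement of what must be shown, not a mechanism; and your closing step --- that two such constraints ``cannot be met simultaneously by a single $\psi$'' in a simply-laced system --- is asserted without argument and is not mere weight-lattice bookkeeping. The difficulty is that an extra tangent direction at $x$ need not be witnessed by any single visible weight relation: it can come from a nonreduced or reducible tangent cone, and excluding this requires actual geometry. In \cite{CK} this is done by constructing the Peterson translate $\tau_C(Z,x)=\lim_{z\to x,\,z\in C}T_zZ$ along a good curve --- a $T$-stable linear subspace of dimension $\dim Z$ contained in the tangent cone, obtained from the flat family of tangent spaces along $C$ --- and then comparing $\tau_C(Z,x)$ with $T_xZ$ using the representation theory of the $\mathfrak{sl}_2$ attached to $\beta$ via the fibration $G/B\to G/P_\beta$; the simply-laced hypothesis enters precisely to show that the translates along two distinct good curves pin $T_xZ$ down to the span of the curve directions. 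Your quadric-cone example $\{uv=w^2\}$ is indeed the standard witness that the statement fails outside the simply-laced case, and your overall reading of \emph{why} the theorem is true is reasonable; but as written the proposal proves only the easy implication and re-packages the hard one as an unproven lemma, so it does not constitute a proof.
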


\begin{proof}[Proof of Theorem~\ref{thm_main}] One direction is clear: If $\Omega_w \cap Y$ is smooth, then the graph $\Gamma(\Omega_w \cap Y)$ is regular.
Indeed, for any $T$-stable subvariety $Z$ of $G/B$, we have
\[
\dim Z\leq |E(Z,x)|\leq \dim T_xZ,
\]
where $T_xZ$ is the Zariski tangent space of $Z$ at $x$ (cf.~\cite[pp. 357--358]{CK}). Then the regularity of $\Gamma(Z)$ is immediately followed from the smoothness of $Z$.

To prove the converse, assume that the graph $\Gamma(\Omega_w \cap Y)$ is regular. We will show that the following claim holds:

\smallskip
\noindent \textsf{\underline{Claim}}. $\Omega_w \cap Y$ is smooth at every $v \in (\Omega_w \cap Y)^T$.
\smallskip

\noindent If the claim holds, then $\Omega_w \cap Y$ is smooth because the singular locus of $\Omega_w \cap Y$ is $T$-stable and closed.
Since we are considering the smoothness at each fixed point, it is enough to consider the claim for each connected component of the intersection $\Ow{w} \cap Y$. Hence, without loss of generality, we may assume that $\Ow{w} \cap Y$ is connected.

Let $\mathscr O$ be a $T$-stable open neighborhood of $e$, for example, the $B^-$-orbit of $e $.
Then $\mathscr{O}_v :=v \mathscr{O}$ is  a $T$-stable open  neighborhood of $X$ at $v$. Considering the  BB-decomposition (or the group actions), we obtain
$\mathscr{O}_v \cong \mathscr{O}_v^- \times \mathscr{O}_v^+$, where $\mathscr{O}_v^-$ is the $B^-$-orbit of $v$ and $\mathscr{O}^+_v$ is the $B$-orbit at $v$.
Since the intersection $\Omega_w \cap Y$ is irreducible by Proposition \ref{prop_irr}, we obtain
\begin{equation}
(\Omega_w \cap Y \cap \mathscr{O}_v^+) \times (\Omega_w \cap Y \cap \mathscr{O}_v^-) \stackrel{\cong}{\longrightarrow} \Omega_w \cap Y \cap \mathscr{O}_v.
\end{equation}
Moreover, $\Omega_w \cap Y \cap \mathscr{O}_v^-  = \Omega_v^{\circ} \cap Y$, which is  smooth.

We first notice that the following subclaim holds:

\smallskip
\noindent \textsf{\underline{Subclaim}}. If $\Omega_w \cap Y \cap \mathscr{O}_v^+$ is of dimension $0$ or $1$, then $\Omega_w \cap Y \cap \mathscr{O}_v^+$ is smooth.
\smallskip

\noindent The subclaim holds because every $T$-stable curve in $X=G/B$ is smooth (see, for instance,~\cite[Lemma~2.2]{CK}).
This proves the claim when $\dim(\Ow{w} \cap Y) \leq 1$.
Hence, we may assume that $\dim(\Ow{w} \cap Y) \geq 2$.

To prove the claim, we will use an induction argument on $\ell(v) -\ell(w)$.
Suppose that $\ell(v) - \ell(w) = 0$ holds, that is,  $v = w$. This is because $\ell(v) > \ell(w) $  holds for $v > w$ (by the Subword Property, see~\cite[Theorem 2.2.2]{BjBr05}). In this case, we have
$\dim (\Ow{w} \cap Y \cap \mathscr{O}_w^+) = 0$.  Indeed, we get
$\Omega_w \cap Y \cap \mathscr{O}_w  = \Omega_w^{\circ} \cap Y$, which is smooth.

Suppose that $w < v$ and $\ell(v)- \ell(w) = 1$.
Then the intersection $\Omega_w \cap Y \cap \mathscr{O}_v^+$ is of dimension~$0$ or $1$ and is smooth since the subclaim holds. Therefore, $\Omega_w \cap Y \cap \mathscr{O}_v$ is smooth.

In general, suppose that $\ell(v) - \ell(w) \geq 2$ and the intersection $\Omega_w \cap Y$ is smooth at $u$ for all $u \in (\Ow{w} \cap Y)^T$ satisfying $w \leq u < v$.
If the intersection $\Omega_w \cap Y \cap \mathscr{O}_v^+$ is of dimension~$0$ or $1$, then it is smooth by the subclaim, and so is $\Ow{w} \cap Y$. Suppose that $\dim(\Omega_w \cap Y \cap \mathscr{O}_v^+)\geq 2$.
Then there exist at least two fixed points $u_1, u_2 \in (\Ow{w} \cap Y)^T$ such that $w \leq u_i \lessdot v$. Consider $T$-stable curves $C_i$ in $\Omega_w \cap Y$ connecting $u_i$ and $v$. Since $\Omega_w \cap Y \cap \mathscr{O}_{u_i}^+$ and $\Omega_w \cap Y \cap \mathscr{O}_{u_i}^-$ are both smooth, any point $z$ in $C_i \setminus \{ u_i, v\}$ is smooth. Therefore, the curves $C_i$ are good.
By Proposition \ref{prop_good curve},  the intersection $\Omega_w \cap Y$ is smooth at $v$, which proves the claim.

Finally, suppose that the intersection $\Ow{w} \cap Y$ is smooth.
Since the closure $\OwY{w}$ is an irreducible component of the intersection $\Ow{w} \cap Y$ by Lemma~\ref{lemma_irreducible}, the closure $\OwY{w}$ is also smooth. Furthermore, if the intersection $\Ow{w} \cap Y$ is smooth and connected, then the closure~$\OwY{w}$ is the only irreducible component, so we obtain $\Ow{w} \cap Y = \OwY{w}$.
Hence, the result follows.
\end{proof}

As one can see in the proof of Theorem~\ref{thm_main}, the following \emph{local} version holds.
\begin{corollary}\label{cor:local_smooth}
Let $G$ be a simply-laced simple algebraic group over $\C$. Let $X = G/B$ be a flag variety, and let $Y$ be a smooth $T$-stable subvariety of $X$. Let $p \in (\Omega_w \cap Y)^T$. If $\left \vert E(\Ow{w} \cap Y, v) \right \vert = \dim \OwoY{w}$ holds for each $v$ satisfying $w \leq v \leq p$, then $\Omega_w \cap Y$ is smooth at~$p$.
\end{corollary}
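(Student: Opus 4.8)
The plan is to localize the inductive argument from the proof of Theorem~\ref{thm_main}. I would prove, by induction on $\ell(v)-\ell(w)$, that $\Omega_w \cap Y$ is smooth at every fixed point $v$ with $w \leq v \leq p$; specializing to $v = p$ then gives the corollary. The guiding observation is that each step of that induction is local around $v$ and appeals only to data attached to the subinterval $[w,v] \subseteq [w,p]$: the edge-count equality $|E(\Omega_w \cap Y, u)| = \dim \OwoY{w}$ at the fixed points $u \in [w,v]$, and the smoothness of $\Omega_w \cap Y$ at the fixed points strictly below $v$. Both are furnished by the hypothesis together with the inductive assumption, so neither the global regularity nor the global irreducibility invoked in Theorem~\ref{thm_main} is needed.

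Granting local irreducibility at $v$ (addressed below), I would first record the local product structure: the Bia{\l}ynicki-Birula neighborhood $\mathscr{O}_v \cong \mathscr{O}_v^- \times \mathscr{O}_v^+$ restricts to $\Omega_w \cap Y \cap \mathscr{O}_v \cong (\Omega_w \cap Y \cap \mathscr{O}_v^+) \times (\Omega_w \cap Y \cap \mathscr{O}_v^-)$, where the minus factor $\Omega_w \cap Y \cap \mathscr{O}_v^- = \Omega_v^{\circ} \cap Y$ is a smooth affine Bia{\l}ynicki-Birula cell of $Y$. Smoothness of $\Omega_w \cap Y$ at $v$ is therefore equivalent to smoothness of the plus factor $\Omega_w \cap Y \cap \mathscr{O}_v^+$ at $v$, which by the subclaim in the proof of Theorem~\ref{thm_main} is automatic as soon as this factor has dimension $0$ or $1$, since every $T$-stable curve in $G/B$ is smooth. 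This settles the base cases $v = w$ and $\ell(v)-\ell(w)=1$.

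For the inductive step at $v$ with $\ell(v)-\ell(w) \geq 2$ and $\dim(\Omega_w \cap Y \cap \mathscr{O}_v^+) \geq 2$, I would choose two fixed points $u_1, u_2$ with $w \leq u_i \lessdot v$. These lie in $[w,v] \subseteq [w,p]$, so by the inductive hypothesis $\Omega_w \cap Y$ is smooth at each $u_i$; combined with the smoothness of the minus cells this makes the $T$-stable curves $C_i$ joining $u_i$ to $v$ good in the sense of Carrell--Kuttler. The edge-count hypothesis at $v$ then permits an application of Proposition~\ref{prop_good curve}, yielding smoothness at $v$ and closing the induction.

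The step I expect to demand the most care is supplying the local irreducibility, equivalently the local equidimensionality $\dim_v(\Omega_w \cap Y) = \dim \OwoY{w}$, that both the product decomposition and the dimension hypothesis of Proposition~\ref{prop_good curve} require; in Theorem~\ref{thm_main} this was handed over for free by the global irreducibility of Proposition~\ref{prop_irr}, which is no longer available. To recover it locally I would adapt the irreducibility argument underlying Propositions~\ref{prop_irr} and~\ref{prop_irr_general}, running it along a saturated chain from $w$ to $v$ inside $[w,v]$ rather than over all of $\Omega_w \cap Y$, so that the global hypothesis $\OwY{w}^T = (\Omega_w \cap Y)^T$ is replaced by the comparison of edge sets along the chain guaranteed by the edge-count equality. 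Once this local equidimensionality is established at each $v \in [w,p]$, the Carrell--Kuttler criterion applies verbatim and the induction goes through.
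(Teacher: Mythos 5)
Your proposal is correct and matches the paper's route exactly: the paper obtains Corollary~\ref{cor:local_smooth} simply by remarking that the induction on $\ell(v)-\ell(w)$ in the proof of Theorem~\ref{thm_main} consumes only the edge-count equality and the already-established smoothness at fixed points lying in the interval $[w,p]$, which is precisely the localization you carry out. The local irreducibility/equidimensionality at $v$ that you isolate as the delicate point is the one genuine subtlety the paper leaves implicit, and your plan to supply it by running the argument of Proposition~\ref{prop_irr_general} along a path inside $[w,p]$ is exactly the role that proposition is designed to play.
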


\section{Hessenberg varieties of type \texorpdfstring{$A$}{A}}\label{sec:Hess}
Recall from Section~\ref{sec:Prelim} that the flag variety $\flag(\mathbb{C}^n)$ is a smooth GKM variety whose Białynicki-Birula decomposition $\{\Owo{w}\}_{w\in\S_n}$ with respect to the natural $T$-action forms a stratification, and a regular semisimple Hessenberg variety $\Hess(s,h)$ is a $T$-stable smooth subvariety of $\flag(\mathbb{C}^n)$. In this section, we apply the results from Section~\ref{sec:Irr} to the intersection $\Ow{w}\cap\Hess(s,h)$ and prove Theorem~\ref{thm_regular_implies_smoothness}. We then examine when the graph $\Gamma(\Ow{w}\cap\Hess(s,h))$ is connected or regular. Finally, we include a brief overview of the pattern avoidance criteria for regularity, previously established in~\cite{CHP}.

Let us prove the first statement of Theorem~\ref{thm_regular_implies_smoothness}.

\begin{proof}[Proof of Theorem~\ref{thm_regular_implies_smoothness} (1)]
Note that the GKM graph of $\Hess(s,h)$ is independent of the choice of $s$, so $\Gwh{w}$ does not depend on the choice of $s$. Hence, if
the graph $\Gwh{\widetilde{w}}$ is regular, then $\Gamma(\Ow{\widetilde{w}}\cap\Hess(u^{-1}su,h))$ is also regular. It follows from Theorem~\ref{thm_main_Hess} that $\Omega_{\widetilde{w}} \cap \Hess(u^{-1}su,h) = \overline{\Ow{\widetilde{w}}^\circ\cap\Hess(u^{-1}su,h)}$ and is smooth.
Since
    \[
        \Owh{w}=u(\overline{\Ow{\widetilde{w}}^\circ\cap\Hess(u^{-1}su,h)})
    \]
    by Proposition~\ref{prop:prop_h-admissible}, the Hessenberg Schubert variety $\Owh{w}$ is smooth.
\end{proof}

In Theorem~\ref{thm_main_Hess}, if the intersection $\Ow{w}\cap\Hess(s,h)$ is connected and smooth, then it becomes the Hessenberg Schubert variety $\Owh{w}$. One may ask when the intersection $\Ow{w}\cap\Hess(s,h)$ is connected; equivalently, the corresponding graph $\Gamma(\Ow{w}\cap\Hess(s,h))$ is connected.
 The following statement gives a sufficient condition for $\Gwh{w}$ to be connected.

\begin{proposition}\label{prop:connected_graph}
    The graph $\Gwh{w}$ is connected if \begin{enumerate}
        \item $w$ is $h$-admissible, or
        \item $\Hess(s,h)$ is connected.
    \end{enumerate}
\end{proposition}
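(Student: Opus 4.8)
The plan is to establish each of the two sufficient conditions separately, using the combinatorial description of the graph $\Gwh{w}$ as the induced subgraph of $\Gamma(\Hess(s,h))$ on the vertex set $[w,w_0]$, with edges $\{z, z(i,j)\}$ whenever $i < j \leq h(i)$ and both $z, z(i,j) \in [w,w_0]$.

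\medskip

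\noindent\textbf{Condition (2): $\Hess(s,h)$ connected.} When $\Hess(s,h)$ is connected, its GKM graph $\Gamma(\Hess(s,h))$ is connected (connectedness of a GKM variety is equivalent to connectedness of its GKM graph, since the one-dimensional orbit closures together with the fixed points form a connected $1$-skeleton carrying the topology). First I would observe that $w_0$ is the unique \emph{maximal} element of $[w,w_0]$, and that for the Hessenberg function $h$ every vertex $z \neq w_0$ in $[w,w_0]$ admits an edge in $\Gamma(\Hess(s,h))$ that \emph{increases} the Bruhat length and stays inside $[w,w_0]$. Concretely, since $z < w_0$, there is a covering reflection $t = (i,j)$ with $i<j$ and $zt \gtrdot z$; the key point is that $\Hess(s,h)$ being connected forces $h$ to be ``large enough'' (indeed $h(i) > i$ for some $i$, so there is a nontrivial edge, and one must argue there is an ascending edge from every non-maximal vertex). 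The cleanest route is to show that from any $z \in [w,w_0]$ one can reach $w_0$ by a sequence of edges in $\Gwh{w}$ that ascend in Bruhat order; connectedness then follows because $w_0$ is a common endpoint for all vertices.

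\medskip

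\noindent\textbf{Condition (1): $w$ is $h$-admissible.} Here I would invoke Proposition~\ref{prop:fixed_points_admissible}, which gives $\Owh{w}^T = [w,w_0] = (\Ow{w}\cap\Hess(s,h))^T$ when $w$ is $h$-admissible. The variety $\Owh{w}$ is the closure of the irreducible affine cell $\Owo{w,h}$, hence is irreducible, and therefore connected. Since $\Gamma(\Owh{w})$ is the $1$-skeleton of a connected GKM variety, it is connected; but by the equality of fixed-point sets $\Gamma(\Owh{w})$ and $\Gwh{w}$ share the same vertex set $[w,w_0]$, and $\Gamma(\Owh{w}) \subseteq \Gwh{w}$ as $\Owh{w}\subseteq \Ow{w}\cap\Hess(s,h)$. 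A connected subgraph spanning all vertices forces the ambient graph to be connected, so $\Gwh{w}$ is connected.

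\medskip

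\noindent\textbf{Main obstacle.} The delicate point is part (2): translating the geometric hypothesis ``$\Hess(s,h)$ is connected'' into the purely combinatorial statement that every non-maximal vertex of $[w,w_0]$ has an ascending edge inside the induced subgraph. The subtlety is that an induced subgraph of a connected graph need not be connected, so I cannot merely cite connectedness of $\Gamma(\Hess(s,h))$; I must produce, for each $z \in [w,w_0]$, an explicit ascending Hessenberg-admissible reflection whose target still lies in $[w,w_0]$. I expect this to reduce to the standard fact that connectedness of $\Hess(s,h)$ is equivalent to the ``indecomposability'' of $h$ (there is no index $k$ with $h(i)\le k$ for all $i\le k$), and that under this condition the covering reflections in Bruhat order can always be chosen to respect the constraint $j \le h(i)$. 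Verifying this combinatorial claim carefully is where the real work lies; part (1) is essentially formal once Proposition~\ref{prop:fixed_points_admissible} is in hand.
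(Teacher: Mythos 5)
Your part (2) is where the gap is. You correctly isolate the claim that has to be proved --- that every $z\in[w,w_0]$ with $z\neq w_0$ admits an ascending edge of $\Gamma(\Hess(s,h))$ whose other endpoint stays in $[w,w_0]$ --- but you then announce that verifying it ``is where the real work lies'' and stop, so condition (2) is not actually established. Moreover, the route you sketch (take a Bruhat covering reflection $(i,j)$ of $z$ and try to arrange $j\leq h(i)$) is harder than necessary. The paper's argument is a one-liner once you use \emph{adjacent} transpositions: connectedness of $\Hess(s,h)$ is equivalent to $h(i)\geq i+1$ for every $i\in[n-1]$, so \emph{every} edge $z\to z(i,i+1)$ belongs to $\Gamma(\Hess(s,h))$; since $z\neq w_0$ the permutation $z$ has an ascent $z(i)<z(i+1)$ at some position $i$, and then $z(i,i+1)>z\geq w$, so this edge lies in the induced subgraph $\Gwh{w}$. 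Iterating yields a chain from $z$ up to $w_0$ inside $\Gwh{w}$, and $w_0$ is a common endpoint for all vertices. You should supply this (or an equivalent) argument explicitly.

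For part (1) your route is genuinely different from the paper's and is essentially sound, but it leans on an unproved lemma. The paper simply cites \cite[Proposition~2.11]{CHL}, which for $h$-admissible $w$ produces an explicit path in $\Gwh{w}$ from each $u\in[w,w_0]$ to $w_0$. You instead argue geometrically: $\Owh{w}$ is irreducible (being the closure of the affine cell $\Owo{w,h}$), hence connected; by Proposition~\ref{prop:fixed_points_admissible} its fixed-point set is all of $[w,w_0]$; and $\Gamma(\Owh{w})$ is a spanning subgraph of $\Gwh{w}$, so connectedness of the former forces connectedness of the latter. What buys you this conclusion is the assertion that the graph of fixed points and $T$-stable curves of a \emph{connected} projective $T$-variety with finitely many fixed points is connected. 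That fact is true and standard, but it is not Theorem~\ref{thm_GKM_cohomology} applied to $\Owh{w}$ --- you have not shown that $\Owh{w}$ is equivariantly formal, so you cannot simply call it a GKM variety --- and it is not among the results quoted in the paper, so it needs a proof or a citation. With that supplied, your argument for (1) is a clean alternative that trades the combinatorics of \cite{CHL} for a general geometric principle.
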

\begin{proof}
We first consider the case where $w$ is $h$-admissible. It follows from Proposition~2.11 in~\cite{CHL} that there is a path from $u$ to $w_0$ in $\Gamma(\Ow{w}\cap\Hess(s,h))$ for each $u\in [w,w_0]$, so $\Gamma(\Ow{w}\cap\Hess(s,h))$ is connected.

Now assume that $\Hess(s,h)$ is connected; equivalently, $h(i)>i$ for each $i=1,\dots,n-1$. For each $v\in \S_{n}$, we construct a chain
\begin{equation*}
    v<_h vs_{j_1}<_h vs_{j_1}s_{j_2}<_h \cdots<_h vs_{j_1}\cdots s_{j_k}=w_0.
\end{equation*}
by iteratively choosing indices $i$ such that $v_t(i)< v_t(i+1)$, where $v_0=v$ and $v_t=vs_{j_1}\cdots  s_{j_t}$ for $0\leq t\leq k-1$. At each step, we take $j_{t+1}=i$ for the chosen index $i$.
Then $v_t$ and $v_{t+1}=v_ts_{j_{t+1}}$ are connected by an edge in $\Gamma(\Hess(s,h))$ because $h(i)\geq i+1$ for every $1\leq i<n$.
If we take $v\geq w$, then $v_t\geq w$ for every $1\leq t\leq k$, so $\Gwh{w}$ is connected.
\end{proof}

\begin{remark}
    The graph $\Gwh{w}$ can be connected even when $w$ is not $h$-admissible, provided that $\Hess(s,h)$ is connected. For instance, when $h=(2,3,3)$ and $w=213$, $\Gwh{w}$ is connected, but it is not a regular graph. See Figure~\ref{fig_GKM_213_223}.

    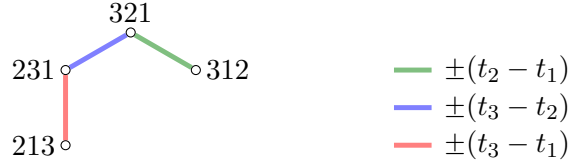
\begin{figure}
\begin{tikzpicture}[scale = 0.5]
\begin{scope}[xshift = -8cm]
    \foreach \x/\y in {312/30, 321/90, 231/150, 213/-150}{
        \node[shape = circle, fill=none, draw=black, inner sep = 0pt , minimum size=1.2mm] (\x) at (\y:2) {};
    }

    \node[left] at (213) {$213$};
    \node[left] at (231) {$231$};
    \node[right] at (312)  {$312$};
    \node[above] at (321) {$321$};

    \draw[line width= 0.4ex, green!50!black,  semitransparent]
    (312)--(321);

    \draw[line width= 0.4ex, blue,  semitransparent]
    (231)--(321);

    \draw[line width= 0.4ex, red,  semitransparent] (213)--(231);

    \end{scope}

    \begin{scope}[xshift=-1cm, yshift=-1cm]
    \draw[line width= 0.4ex, green!50!black,  semitransparent] (0,2)--(1,2)
        node[at end, right, black, opacity = 1] {$\pm (t_2 - t_1)$};

    \draw[line width= 0.4ex, blue,  semitransparent] (0,1)--(1,1)
        node[at end, right, black, opacity=1] {$\pm (t_3-t_2)$};

    \draw[line width= 0.4ex, red,  semitransparent] (0,0)--(1,0)
        node[at end, right, black, opacity=1] {$\pm (t_3 - t_1)$};
    \end{scope}
	\end{tikzpicture}
\caption{$\Gamma(\Ow{w}\cap\Hess(s,h))$ when $w=213$ and $h=(2,3,3)$}\label{fig_GKM_213_223}
\end{figure}
\end{remark}

\begin{remark}
As an immediate consequence of Proposition~\ref{prop:connected_graph}, we observe the following:
\begin{enumerate}
    \item If the graph $\Gamma(\Omega_w\cap\Hess(s,h))$ is disconnected, then $w$ is not $h$-admissible.
    \item If $\Hess(s,h)$ is connected and $w$ is not $h$-admissible, then $\Omega_w\cap\Hess(s,h)$ is reducible and connected, so $\Omega_w\cap\Hess(s,h)$ is not smooth. Accordingly, $\Gwh{w}$ is not regular.
    \item If $w$ is not $h$-admissible and $\Gwh{w}$ is regular, then $\Hess(s,h)$ is disconnected.
\end{enumerate}
\end{remark}

We now investigate the structure of $\Gwh{w}$ for an $h$-admissible permutation~$w$.
Note that for each vertex $u$ in $\Gwh{w}$, the size of the set $E(\Ow{w}\cap\Hess(s,h), u)$ is equal to the size of the following set
\[
E_{w,h}(u) \coloneqq \{ (i,j) \mid u(i,j) \geq w \text{~and~} 1\leq i<j\leq h(i) \},
\]
which consists of the transpositions defining the edges meeting at $u$.
We assume that all transpositions $(i,j)$ satisfy $i<j$ for notational convenience. Using the graph $\Gamma(\Hess(s,h))$, we define the notion of \emph{$h$-Bruhat order} on $\S_n$ as the transitive closure of the following relation:
\[
u<_h v \quad \text{ if and only if }\quad v=u(i,j) \text{ for some $(i,  j)$ such that $j\leq h(i)$ and } \ell(v)>\ell(u).
\]

\begin{lemma}\cite[Lemma 3.6]{CHP} \label{lem:CHP}
    Let $w$ be an $h$-admissible permutation. For vertices $u$ and $v$ in $\Gwh{w}$ satisfying that $v=u(a,b)$ for some $(a,b) \in E_{w,h}(u)$ and $u<_h v$, the map $\phi_{uv} \colon E_{w,h}(u) \to E_{w,h}(v)$ by $(i,j) \mapsto (\overline{i},\overline{j})$ is injective, where
    $$
    (\overline{i},\overline{j}) \coloneqq \begin{cases}
    (b,j) & \mbox{if $i=a$, $j>b$, and $(b,j)\not \in E_{w,h}(u) $},\\
    (i,a) & \mbox{if $i<a$, $j=b$, and $(i,a)\not \in E_{w,h}(u) $},\\
    (i,j) & \mbox{otherwise}.
    \end{cases}
    $$
\end{lemma}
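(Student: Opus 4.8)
The plan is to separate the statement into the two things that actually need checking: that the assignment $(i,j)\mapsto(\overline i,\overline j)$ really lands in $E_{w,h}(v)$, and that it is injective. First I would record the consequences of the hypothesis $u<_h v$. We have $v=u(a,b)$ with $a<b\le h(a)$, and since each $h$-Bruhat step increases length, $\ell(v)>\ell(u)$, whence $u(a)<u(b)$. Moreover $u$ and $v$ are vertices of $\Gamma(\Ow{w}\cap\Hess(s,h))$, so $u\ge w$, and $v=u(a,b)\ge w$ because $(a,b)\in E_{w,h}(u)$. Note that the three clauses defining $(\overline i,\overline j)$ are mutually exclusive, the first requiring $i=a$ and the second $i<a$, so $\phi_{uv}$ is unambiguously defined on $E_{w,h}(u)$. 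I then partition the domain accordingly: \emph{type A} is the pairs $(a,j)$ with $j>b$ and $(b,j)\notin E_{w,h}(u)$, sent to $(b,j)$; \emph{type B} is the pairs $(i,b)$ with $i<a$ and $(i,a)\notin E_{w,h}(u)$, sent to $(i,a)$; \emph{type C} is everything else, fixed by $\phi_{uv}$.

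For injectivity, which I regard as the clean formal part, I argue that no two domain elements share an image. On each single type the map is visibly injective: $(a,j)\mapsto(b,j)$ and $(i,b)\mapsto(i,a)$ are injective in $j$ and in $i$ respectively, and type C is the identity. It then remains to rule out collisions across types. A type-A image $(b,j)$ has first coordinate $b$, whereas a type-B image $(i,a)$ has first coordinate $i<a<b$, so these never agree. A type-A image satisfies $(b,j)\notin E_{w,h}(u)$ by the defining condition of type A, while every type-C image lies in $E_{w,h}(u)$ (it is a domain element), so type-A and type-C images are disjoint; the same argument with $(i,a)\notin E_{w,h}(u)$ separates type B from type C. Hence the three image sets are pairwise disjoint and $\phi_{uv}$ is injective. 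The point worth stressing is that the two conditions ``$(b,j)\notin E_{w,h}(u)$'' and ``$(i,a)\notin E_{w,h}(u)$'' built into the definition are exactly what prevents a moved pair from landing on an unmoved one.

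It remains to check well-definedness, i.e.\ that $(\overline i,\overline j)\in E_{w,h}(v)$ for every $(i,j)\in E_{w,h}(u)$, and this is where I expect the real work to lie. The Hessenberg inequality $\overline i<\overline j\le h(\overline i)$ is easy: for type A one needs $b<j\le h(b)$, and $j\le h(a)\le h(b)$ follows from $(a,j)\in E_{w,h}(u)$ with $h$ nondecreasing, symmetrically for type B, and it is immediate for type C. The delicate requirement is the Bruhat condition $v(\overline i,\overline j)\ge w$, which must be verified for all three types---even for the fixed pairs of type C, where one must show $v(i,j)=u(a,b)(i,j)\ge w$ rather than merely $u(i,j)\ge w$. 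My approach would be to write the image permutation explicitly as $u$ altered on three positions---for type A, $v(b,j)$ is $u$ with $u(b),u(j),u(a)$ placed in positions $a,b,j$---and then verify $v(\overline i,\overline j)\ge w$ by the rank (Ehresmann) criterion for the Bruhat order, feeding in the hypotheses $u(a,b)\ge w$ and $u(a,j)\ge w$ (resp.\ $u(i,b)\ge w$) together with the $h$-admissibility of $w$. I expect this Bruhat-order bookkeeping, rather than the injectivity count, to be the main obstacle, since it is the only step that uses the geometry of $w$ and $h$ beyond the bare combinatorics of the map; the cleanest route is likely to combine the rank criterion with the subword property so as to reduce each case to the relations already in hand.
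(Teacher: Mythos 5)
The paper does not actually prove this lemma; it imports it verbatim as \cite[Lemma~3.6]{CHP}, so there is no internal argument to compare against, and your proposal has to stand on its own. The parts you do carry out are fine: the three clauses are indeed mutually exclusive, the within-type and cross-type injectivity argument is correct (the observation that the conditions $(b,j)\notin E_{w,h}(u)$ and $(i,a)\notin E_{w,h}(u)$ are exactly what separates the moved images from the fixed ones is the right one), and the verification of the Hessenberg inequality $\overline{j}\leq h(\overline{i})$ for the images is correct in all three types.

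However, there is a genuine gap: you never establish that $\phi_{uv}$ is well defined, i.e.\ that $v(\overline{i},\overline{j})\geq w$ for each image pair, and this is the entire substance of the lemma. You explicitly defer it (``My approach would be\dots'', ``I expect this\dots to be the main obstacle''), which is a plan rather than a proof. Note that even the ``otherwise'' clause is not free: from $(i,j)\in E_{w,h}(u)$ you know $u(i,j)\geq w$, but what is needed is $v(i,j)=u(a,b)(i,j)\geq w$, a different permutation, and the ``otherwise'' case itself splits into several configurations ($\{i,j\}\cap\{a,b\}$ of size $0$, $1$, or $2$; $i=a$ with $j<b$; $i=a$, $j>b$ with $(b,j)\in E_{w,h}(u)$; $j=a$; $i=b$; etc.), each requiring its own Bruhat-order verification via Proposition~\ref{prop:Bruhat order}. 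This is also the only place where the hypotheses $u(a)<u(b)$ (from $u<_h v$) and the $h$-admissibility of $w$ can enter, and you do not show how they do. For a sense of the scale of the missing work, compare the proof of Theorem~\ref{thm_regular_implies_smoothness}(2) in Section~\ref{sec:Hess}, which carries out precisely this kind of one-line-notation case analysis for the special case $u=w$ (surjectivity of $\phi_{wv}$) and already occupies four multi-part cases; the general well-definedness statement requires a comparable analysis that your proposal omits.
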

\begin{example}
For $h = (3,3,4,4)$ and $w = 2134$, we demonstrate Lemma~\ref{lem:CHP}. We refer the reader to Figure~\ref{figure_Graphs_2134} for the graph $\Gamma(\Omega_w \cap \Hess(s,h))$. We first notice that two exceptional cases in Lemma~\ref{lem:CHP} occur only when $i=a < j < b$ or $i < a < j=b$.

Take $u = 3142$ and $v = 3412 = 3412(2,3)$, that is $(a,b) = (2,3)$. In this case, we have
\[
E_{w,h}(u) = \{(1,3), (2,3), (3,4)\} \quad \text{and} \quad
E_{w,h}(v) = \{(1,2), (2,3), (3,4)\}.
\]
For $(i,j) = (2,3)$ or $(3,4)$, neither $i=a < b < j$ nor $i < a < j=b$. Therefore, $\phi_{uv}(2,3) = (2,3)$ and $\phi_{uv}(3,4) = (3,4)$. For $(i,j) = (1,3) \in E_{w,h}(u)$, we have $1 < a=2$, $3=b$, and moreover, $(1,2) \notin E_{w,h}(u)$. Therefore, we get $\phi_{uv}(1,3) = (1,2)$.
\end{example}

Let $w$ be an $h$-admissible permutation. We denote by $\deg_{w,h}(u)$ the number of edges connected to $u$ in $\Gwh{u}$, that is,
\[
\deg_{w,h}(u)=|E_{w,h}(u)|=|E(\Ow{w}\cap \Hess(s,h),u)|.
\]
It follows from Lemma~\ref{lem:CHP} that
$$u<_h v \implies \deg_{w,h}(u)\leq \deg_{w,h}(v).$$

\begin{corollary} \label{cor regularity}
    Let $w$ be an $h$-admissible permutation. For a vertex $u\in [w,w_0]$, if $\deg_{w,h}(u)=\dim \Owo{w,h}$, then $\Ow{w}\cap\Hess(s,h)$ is smooth at each $v$ with $w\leq_h v\leq_h u$. In particular, if $\deg_{w,h}(w_0)=\dim\Owo{w,h}$, then $\Ow{w}\cap\Hess(s,h)=\Ow{w,h}$ and it is smooth.
\end{corollary}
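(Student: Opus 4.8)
The plan is to run, for the fixed $h$-admissible permutation $w$, the same two-step strategy used in the proof of Theorem~\ref{thm_main}: first establish local irreducibility of $Z\coloneqq\Ow{w}\cap\Hess(s,h)$ by Proposition~\ref{prop_irr_general}, and then upgrade irreducibility to smoothness by the good-curve induction built on Proposition~\ref{prop_good curve}. Since $\GL_n(\C)/B$ is simply-laced and $\Hess(s,h)$ is smooth, all the hypotheses of these results are available. I first record two numerical inputs. Because $\Owo{w,h}=\Owo{w}\cap\Hess(s,h)$ is the open Bia{\l}ynicki--Birula cell through $w$, the variety $Z$ is smooth at $w$, so $\deg_{w,h}(w)=\dim\Owo{w,h}$ by~\eqref{eq_dim_and_number_of_edges}; feeding this into Lemma~\ref{lem:number_of_edges} gives $\dim\Owo{w,h}=\deg_{w,h}(w)\ge\dim Z\ge\dim\Ow{w,h}=\dim\Owo{w,h}$, hence $\dim Z=\dim\Owo{w,h}$ and, for every vertex $v$,
\[
\deg_{w,h}(v)=|E(Z,v)|\ge\dim Z=\dim\Owo{w,h}.
\]

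Next I would pin the degree down on the relevant region. By the monotonicity $z<_h z'\Rightarrow\deg_{w,h}(z)\le\deg_{w,h}(z')$ extracted from Lemma~\ref{lem:CHP}, any $v\le_h u$ satisfies $\deg_{w,h}(v)\le\deg_{w,h}(u)=\dim\Owo{w,h}$; combined with the free lower bound above this yields
\[
\deg_{w,h}(v)=\dim\Owo{w,h}\qquad\text{for every $v$ with }v\le_h u.
\]
For statement~(1), fix $v$ with $w\le_h v\le_h u$. Since $w$ is $h$-admissible, Proposition~\ref{prop:fixed_points_admissible} gives $\Ow{w,h}^T=[w,w_0]=Z^T$, so the standing hypothesis of Proposition~\ref{prop_irr_general} holds. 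A length-increasing chain of $h$-edges from $w$ to $v$ (which exists as $w\le_h v$) is a path $\gamma$ in $\Gamma(Z)$ all of whose vertices lie $h$-below $u$, hence carry degree $\dim\Owo{w,h}=\dim Z$; Proposition~\ref{prop_irr_general} then shows $Z$ is irreducible at $v$. With irreducibility at $v$ in hand, the local product decomposition $Z\cap\mathscr{O}_v\cong(Z\cap\mathscr{O}_v^-)\times(Z\cap\mathscr{O}_v^+)$ is available exactly as in Theorem~\ref{thm_main}, and I would induct on $\ell(v)$: the minus factor is the smooth affine cell $\Owo{v,h}$, while two good curves descending from $v$ along $h$-edges to lower vertices (smooth by the inductive hypothesis) let Proposition~\ref{prop_good curve} conclude smoothness at $v$, the dimension $0$ or $1$ cases being absorbed by the subclaim that $T$-stable curves in $G/B$ are smooth.

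For the ``in particular'' clause one takes $u=w_0$. Every vertex $v\in[w,w_0]$ lies $h$-below $w_0$ (it can be joined to $w_0$ by a length-increasing path of $h$-edges, as in the proof of Proposition~\ref{prop:connected_graph}), so the displayed squeeze forces $\deg_{w,h}(v)=\dim\Owo{w,h}$ at every vertex; that is, $\Gamma(Z)$ is regular. Since $w$ is $h$-admissible, $\Gamma(Z)$ is also connected by Proposition~\ref{prop:connected_graph}(1), whence Theorem~\ref{thm_main_Hess} gives $\Ow{w}\cap\Hess(s,h)=\Ow{w,h}$ together with its smoothness.

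The delicate point throughout is the interplay between the $h$-Bruhat order $\le_h$, along which the degree control and the good-curve induction actually propagate, and the ordinary Bruhat order in which $Z^T=[w,w_0]$ is described. Concretely, I must guarantee that the lower endpoints of the descending $h$-edges invoked at each inductive step remain inside the region $\{z:w\le_h z\le_h u\}$, where both the degree equality and the irreducibility input are available; equivalently, that the $h$-edge descendants of $v$ stay $h$-above $w$. This is precisely where the combinatorics of $h$-admissible permutations and the injection $\phi_{uv}$ of Lemma~\ref{lem:CHP} must be used, and I expect it to be the main obstacle to turning this outline into a complete proof.
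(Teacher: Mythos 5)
Your outline follows the same route as the paper: the degree squeeze (lower bound from Lemma~\ref{lem:number_of_edges} applied through $\Ow{w,h}$, whose fixed-point set equals $[w,w_0]$ by Proposition~\ref{prop:fixed_points_admissible}, combined with the monotonicity of $\deg_{w,h}$ along $\leq_h$ from Lemma~\ref{lem:CHP}) pins $\deg_{w,h}(v)=\dim\Owo{w,h}$ on $\{v: v\leq_h u\}$, and the ``in particular'' clause is handled exactly as in the paper via regularity, connectedness from Proposition~\ref{prop:connected_graph}(1), and Theorem~\ref{thm_main_Hess}. The only real difference is that the paper does not re-run the good-curve induction: it simply invokes Corollary~\ref{cor:local_smooth}, which already packages Proposition~\ref{prop_irr_general} and Proposition~\ref{prop_good curve} into the local statement you are reconstructing.

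The point you flag at the end as ``the main obstacle'' is a genuine gap in your write-up, but it is closed by a single combinatorial fact that the paper states as the first sentence of its proof: for $h$-admissible $w$, every $z\in[w,w_0]$ satisfies $w\leq_h z\leq_h w_0$, by \cite[Proposition~2.11]{CHL} and \cite[Lemma~3.2]{CHP}. This is what reconciles the two orders. Concretely, the descending curves used at a vertex $v$ in the induction are edges of $\Gamma(\Ow{w}\cap\Hess(s,h))$, so their lower endpoints $u_i$ satisfy $u_i<_h v\leq_h u$ (giving the degree equality at $u_i$), and the cited fact supplies $w\leq_h u_i$, so the $u_i$ stay inside the region $\{z: w\leq_h z\leq_h u\}$ where the inductive hypothesis applies; the same compatibility is what legitimizes applying Corollary~\ref{cor:local_smooth}, whose hypothesis is stated over the Bruhat interval $[w,p]$ rather than over $\leq_h$. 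So your proposal is not wrong, but it is incomplete exactly at the step where the paper leans on the external results about $h$-admissible permutations; without importing that compatibility you cannot guarantee that the degree equality and the inductive smoothness hypothesis are available at the Bruhat-lower fixed points your induction descends to.
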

\begin{proof}
    Recall that if $w$ is $h$-admissible, each $u\in [w,w_0]$ satisfies that $w \leq_h  u  \leq_h  w_0$ by \cite[Proposition~2.11]{CHL} and \cite[Lemma~3.2]{CHP}. By Lemma~\ref{lem:CHP}, if $\deg_{w,h}(u)=\dim \Owo{w,h}$, then $\deg_{w,h}(v)=\dim\Owo{w,h}$ for every vertex $v$ with $w\leq_h v\leq_h u$. Therefore, the result follows from Corollary~\ref{cor:local_smooth}.

    If we further assume that $\deg_{w,h}(w_0)=\dim\Owo{w,h}$, then the graph $\Gamma(\Ow{w}\cap\Hess(s,h))$ is regular by Lemma~\ref{lem:CHP}. By applying Theorem~\ref{thm_main_Hess}, the result follows.
\end{proof}

Before we prove the second statement of Theorem~\ref{thm_regular_implies_smoothness}, we recall a well-known criterion for determining whether \( v \leq w \) in the Bruhat order. Given a subset \( S \subset [n] \), we write \( S \!\!\uparrow \) for the tuple obtained by listing the elements of \( S \) in increasing order.

\begin{proposition}\cite[Theorem 2.6.3]{BjBr05}\label{prop:Bruhat order}
    For $u, v\in \mathfrak{S}_n$, the following statements are equivalent:
    \begin{enumerate}
        \item $u\leq v$ in the Bruhat order.
        \item $u[k]\!\!\uparrow\leq v[k]\!\!\uparrow$ for all $k\in [n]$.
    \end{enumerate}
    Here, we denote by $u[k]$ the set $\{u(1),u(2),\dots,u(k)\}$.
\end{proposition}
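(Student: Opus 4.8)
The plan is to prove the two implications separately, with a single elementary fact as the common engine. For a $k$-element set $S\subseteq[n]$ write $S\!\!\uparrow=(s_1<\dots<s_k)$ and compare such tuples \emph{entrywise}, i.e.\ $x\le y$ means $x_i\le y_i$ for all $i$. The engine is the monotonicity of sorting: if $S'=(S\setminus\{x\})\cup\{y\}$ with $x\in S$, $y\notin S$ and $x<y$, then $S\!\!\uparrow\le S'\!\!\uparrow$. This is immediate (increasing one element of a set can only increase each entry of its sorted tuple), and I would record it as a preliminary lemma, since both directions reduce to repeated applications of it.

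\textbf{The implication $(1)\Rightarrow(2)$.} Here I would use that the Bruhat order is the transitive closure of its covering relations, each of which has the form $u<u(a,b)$ with $a<b$, $u(a)<u(b)$, and $\ell(u(a,b))=\ell(u)+1$ (see \cite{BjBr05}). Since the entrywise order on tuples is transitive, it suffices to verify $(2)$ for one such length-increasing transposition $v=u(a,b)$. For $k<a$ and $k\ge b$ the sets $u[k]$ and $v[k]$ coincide, while for $a\le k<b$ one has $v[k]=(u[k]\setminus\{u(a)\})\cup\{u(b)\}$ with $u(a)<u(b)$. The monotonicity lemma then gives $u[k]\!\!\uparrow\le v[k]\!\!\uparrow$ for every $k$, which is $(2)$.

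\textbf{The implication $(2)\Rightarrow(1)$.} This is the substantive direction, and I would prove it by induction on the discrepancy
\[
D(u,v)=\sum_{k=1}^{n}\sum_{i=1}^{k}\left((v[k]\!\!\uparrow)_i-(u[k]\!\!\uparrow)_i\right),
\]
which is a nonnegative integer under $(2)$ and vanishes precisely when $u[k]=v[k]$ for all $k$, i.e.\ when $u=v$. Assuming $u\ne v$, the goal is an \emph{Exchange Lemma}: produce a transposition $t=(a,b)$ with $a<b$ and $u(a)<u(b)$ (so that $u<ut$ in Bruhat order) for which $ut$ still satisfies $(2)$ and $D(ut,v)<D(u,v)$; the induction then yields $ut\le v$, hence $u<ut\le v$. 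To locate $a$, take it minimal with $u(a)\ne v(a)$; since $u[a-1]=v[a-1]$ as sets, the monotonicity lemma applied at $k=a$ forces $u(a)<v(a)$. The value $v(a)$ occurs in $u$ at some position $c>a$, and one raises position $a$ by swapping $u(a)$ with a suitably chosen later value, pushing $u$ toward $v$. Because the swap replaces $u(a)$ by a strictly larger value in each prefix $u[k]$ for $a\le k<b$, the sum $\sum_i(u[k]\!\!\uparrow)_i$ strictly increases on this nonempty range, so $D$ strictly decreases—provided every term stays nonnegative, i.e.\ provided $(2)$ is preserved.

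\textbf{Main obstacle and how I would resolve it.} The delicate point, which is the genuine content of the tableau (Ehresmann) criterion, is choosing $b$ so that $(2)$ is \emph{preserved} at the intermediate prefixes $a\le k<b$: raising position $a$ raises all these prefixes, and an unlucky choice could overshoot some $v[k]\!\!\uparrow$. I expect this verification to be the main difficulty. I would resolve it by the classical bookkeeping, via either of two routes. The first tracks the largest value: showing that the position of $n$ in $u$ is weakly to the right of its position in $v$ (else $(2)$ fails at that prefix), moving $n$ one admissible step leftward as the exchange, and then deleting $n$ to induct on $n$ after checking $(2)$ descends to $\mathfrak{S}_{n-1}$. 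The second reformulates $(2)$ through the rank function $r_w(k,j)=|\{\,a\le k : w(a)\ge j\,\}|$, under which $(2)$ becomes $r_u(k,j)\le r_v(k,j)$ for all $k,j$; the preservation of $(2)$ under the chosen swap then reduces to a local comparison of these rank counts, which is routine. Either route closes the induction and completes the equivalence.
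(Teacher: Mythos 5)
The paper itself gives no proof of this proposition---it is quoted verbatim from \cite[Theorem~2.6.3]{BjBr05}---so your attempt must stand on its own against the standard argument, and as written it has a genuine gap exactly at the point you flag as ``the main difficulty.'' Your direction $(1)\Rightarrow(2)$ is complete and correct, and your skeleton for $(2)\Rightarrow(1)$ (induction on the discrepancy $D$, first discrepancy position $a$ with $u(a)<v(a)$ forced, exchange lemma) is the right one. But the exchange lemma is the entire content of the theorem, you never specify the swap position $b$, and \emph{both} concrete prescriptions you offer fail. In your first route, ``moving $n$ one admissible step leftward'' is false as a recipe: take $u=1324$, $v=2413$, for which $(2)$ holds; here $4$ sits at position $4$ in $u$ and position $2$ in $v$, but the one-step move gives $u'=1342$, whose prefix $(1,3,4)$ at $k=3$ is not dominated by $(1,2,4)$, so $(2)$ is destroyed (one must jump $4$ directly to position $2$ here---yet that direct jump fails elsewhere, e.g.\ for $u=12345$, $v=51234$). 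In your second route, the ``routine local comparison'' of rank counts is only routine \emph{after} the correct choice of $b$, and the natural choices fail: taking $u(b)$ of \emph{minimal value} in $\{u(i): i>a,\ u(a)<u(i)\le v(a)\}$ fails for $u=15432$, $v=35412$ (condition $(2)$ holds, but swapping in the value $2$ from position $5$ gives $u'=25431$, whose prefix $(2,3,4,5)$ at $k=4$ is not dominated by $(1,3,4,5)$), while taking $u(b)$ of \emph{maximal value} $\le v(a)$ fails for $u=12345$, $v=51234$ (the swap gives $u'=52341$, and $(2,5)$ is not dominated by $(1,5)$ at $k=2$).

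The correct choice, which your sketch never isolates, is $b$ of \emph{minimal position} with $b>a$ and $u(a)<u(b)\le v(a)$; such $b$ exists since the value $v(a)$ occurs in $u$ after position $a$. With this choice the verification really is short in your rank formulation $r_w(k,j)=|\{\,i\le k: w(i)\ge j\,\}|$: minimality of $b$ forces $u(i)\notin(u(a),v(a)]$ for $a<i<b$, so for $a\le k<b$ and $u(a)<j\le u(b)$ one gets $r_u(k,j)=|\{i<a: u(i)\ge j\}|+B$ with $B=|\{i\in(a,k]: u(i)>v(a)\}|$; applying hypothesis $(2)$ at the single threshold $v(a)+1$ gives $B\le|\{i\in(a,k]: v(i)>v(a)\}|\le|\{i\in(a,k]: v(i)\ge j\}|$, and since position $a$ contributes $1$ to $r_v(k,j)$ but $0$ to $r_u(k,j)$, this yields the needed strict inequality $r_u(k,j)+1\le r_v(k,j)$, so the swap preserves $(2)$ and your induction on $D$ closes. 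Without this choice and this three-line count, the proposal proves the easy direction and an induction frame but leaves the substantive half of Ehresmann's criterion unestablished.
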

When we prove Theorem~\ref{thm_regular_implies_smoothness} (2), we will frequently apply Proposition~\ref{prop:Bruhat order}.  We do not mention it explicitly in each case.

\begin{proof}[Proof of Theorem~\ref{thm_regular_implies_smoothness} (2)]
For a permutation $w\in\S_n$, let $\widetilde{w}$ be the unique $h$-admissible permutation corresponding to $w$. Set $u=w\widetilde{w}^{-1}$. By  Proposition~\ref{prop:prop_h-admissible}, when $z=wt$ for some reflection $t$, if $\Ow{\widetilde{w},h}$ is smooth at $u^{-1}z$, the variety $\Ow{w,h}$ is smooth at $z$. In the following, we will show that $\Ow{\widetilde{w},h}$ is smooth at $v$ if $v=\widetilde{w}t$ for some transposition $t$.

For notational simplicity, we now assume that $w$ is $h$-admissible. Suppose that $v=w(a,b)$ for some transposition $(a,b)$. To prove the smoothness of $\Ow{w,h}$ at $v$, by
    Corollary~\ref{cor:local_smooth}, it is enough to show that $\deg_{w,h}(v)=\dim\Owo{w}$.
    By Lemma~\ref{lem:CHP}, it suffices to show that the map
    \[
    \phi_{wv}\colon E_{w,h}(w)\to E_{w,h}(v)
    \]
    is surjective.

    For each $(i,j)\in E_{w,h}(v)$, there are three possibilities: $|\{i,j\}\cap\{a,b\}| = 2$, $1$, or $0$.
    Note that $a<b$ and $i<j$. If $\{i,j\}=\{a,b\}$, then $\phi_{wv}(a,b)=(a,b)$. If $\{i,j\}\cap\{a,b\}=\emptyset$, then $(i,j)\in E_{w,h}(v)$ implies $w(i)<w(j)$, so $(i,j)\in E_{w,h}(w)$. Indeed, if we apply Proposition~\ref{prop:Bruhat order} to $w(a,b)(i,j)>w$, then the desired result can be obtained in each case by choosing $k$ as follows:
    \[
    \begin{cases}
        k = i & \text{ if }i<a \text{ or }i>b;\text{ and}\\
        k = b & \text{ if }a<i<b.
    \end{cases}
    \]
    Thus, we
    have only to show that $\phi_{wv}^{-1}(i,j)$ is nonempty for the transposition $(i,j)\in E_{w,h}(v)$ with $|\{i,j\}\cap\{a,b\}|=1$. There are four possible cases to consider.  Throughout these cases, we use the fact that \( w(a) < w(b) \), which follows from \( w < v = w(a,b) \) in the Bruhat order by Proposition~\ref{prop:Bruhat order}.

\begin{description}\setlength{\itemsep}{1.2em}
    \item[Case 1. $i=a<j<b$] The following table lists the one-line notation for each of the permutations $w$, $v$, $v(a,j)$, and $w(a,j)$.
    \[
    \begin{array}{rcccccccc}
        &&& i=a & & j & & b &\\
        w&=&\cdots&w(a)&\cdots&w(j)&\cdots&w(b)&\cdots\\
        v&=&\cdots&w(b)&\cdots&w(j)&\cdots&w(a)&\cdots\\
        v(a,j)&=& \cdots & w(j) & \cdots & w(b) & \cdots & w(a)&\cdots\\
        w(a,j)&=&\cdots&w(j)&\cdots&w(a)&\cdots&w(b)&\cdots
    \end{array}
    \]
    If $(a,j) \in E_{w,h}(v)$, then $v(a,j) > w$, so $w(a)<w(j)$ by Proposition~\ref{prop:Bruhat order}.
    Therefore, $w(a,j)>w$, which implies $(a,j) \in E_{w,h}(w)$. Accordingly, $\phi_{wv}(a,j) = (a,j)$.

    \item[Case 2. $a<i<j=b$] The following table lists the one-line notations for the permutations $w$, $v$, $v(i,b)$, and $w(i,b)$.
    \[
    \begin{array}{rcccccccc}
        &&& a & & i & & j=b &\\
        w&=&\cdots&w(a)&\cdots&w(i)&\cdots&w(b)&\cdots\\
        v&=&\cdots&w(b)&\cdots&w(i)&\cdots&w(a)&\cdots\\
        v(i,b)&=& \cdots & w(b) & \cdots & w(a) & \cdots & w(i)&\cdots\\
        w(i,b)&=&\cdots&w(a)&\cdots&w(b)&\cdots&w(i)&\cdots
    \end{array}
    \]
    If $(i,b) \in E_{w,h}(v)$, then $v(i,b) > w$, which implies that $w(i) < w(b)$.
    Hence $w(i,b)>w$, which implies $(i,b) \in E_{w,h}(w)$, so $\phi_{wv}(i,b) = (i,b)$.

    \item[Case 3. $i < a$ and $j \in \{a, b\}$] First, consider the case when $j=a$. The following table displays the one-line notation for each of the permutations $w$, $v$, $v(i,a)$, and $w(i,b)$.
    \[
    \begin{array}{rcccccccc}
        &&& i & & a & & b &\\
        w&=&\cdots&w(i)&\cdots&w(a)&\cdots&w(b)&\cdots\\
        v&=&\cdots&w(i)&\cdots&w(b)&\cdots&w(a)&\cdots\\
        v(i,a)&=& \cdots & w(b) & \cdots & w(i) & \cdots & w(a)&\cdots\\
        w(i,b)&=&\cdots&w(b)&\cdots&w(a)&\cdots&w(i)&\cdots
    \end{array}
    \]
    If $(i,a)\in E_{w,h}(v)$, then $v(i,a)>w$, so $w(i) < w(b)$. Hence $(i,b)$ belongs to $E_{w,h}(w)$. If $(i,a) \notin E_{w,h}(w)$, then $\phi_{wv}(i,b) = (i,a)$; otherwise, we have $\phi_{wv}(i,a) = (i,a)$.

    We now turn to the case when $j=b$.
    The one-line notations for $w$, $v$, $v(i,b)$, and $w(i,b)$ are shown below.
    \[
    \begin{array}{rcccccccc}
        &&& i & & a & & b &\\
        w&=&\cdots&w(i)&\cdots&w(a)&\cdots&w(b)&\cdots\\
        v&=&\cdots&w(i)&\cdots&w(b)&\cdots&w(a)&\cdots\\
        v(i,b)&=& \cdots & w(a) & \cdots & {w(b)} & \cdots & {w(i)}&\cdots\\
        w(i,b)&=&\cdots&w(b)&\cdots& {w(a)}&\cdots& {w(i)}&\cdots
    \end{array}
    \]
    If $(i,b)\in E_{w,h}(v)$, then from $v(i,b) > w$, we have $w(i) < w(a)$ and thus $(i,a)   \in E_{w,h}(w)$. Since $w(a) < w(b)$, we have $(i,b) \in E_{w,h}(w)$, so $\phi_{wv}(i,b) = (i,b)$.

    \item[Case 4. $i \in \{a, b\}$ and $j > b$] The following table lists the one-line notation for each of the permutations $w$, $v$, $v(a,j)$, and $w(a,j)$.
    \[
    \begin{array}{rcccccccc}
        &&& a & & b & & j &\\
        w&=&\cdots&w(a)&\cdots&w(b)&\cdots&w(j)&\cdots\\
        v&=&\cdots&w(b)&\cdots&w(a)&\cdots&w(j)&\cdots\\
        v(a,j)&=& \cdots & w(j) & \cdots & w(a) & \cdots & w(b)&\cdots\\
        w(a,j)&=&\cdots&w(j)&\cdots&w(b)&\cdots&w(a)&\cdots
    \end{array}
    \]If $(a,j)\in E_{w,h}(v)$, then
    $w(a)<w(j)$, so $(a,j)\in E_{w,h}(w)$ and $(b,j) \in E_{w,h}(w)$. Accordingly, $\phi_{wv}(a,j)=(a,j)$.

    It remains to consider the case $(b,j)\in E_{w,h}(v)$. Listed below are the one-line notations for $w$, $v$, $v(b,j)$, and $w(a,j)$.
    \[
    \begin{array}{rcccccccc}
        &&& a & & b & & j &\\
        w&=&\cdots&w(a)&\cdots&w(b)&\cdots&w(j)&\cdots\\
        v&=&\cdots&w(b)&\cdots&w(a)&\cdots&w(j)&\cdots\\
        v(b,j)&=& \cdots & w(b) & \cdots & w(j) & \cdots & w(a)&\cdots\\
        w(a,j)&=&\cdots&w(j)&\cdots&w(b)&\cdots&w(a)&\cdots
    \end{array}
    \] Since $v(b,j)> w $,
    we have $w(a)<w(j)$, so $(a,j)\in E_{w,h}(w)$. If $(b,j)\notin E_{w,h}(w)$, then $\phi_{wv}(a,j)=(b,j)$. Otherwise, $\phi_{wv}(b,j)=(b,j)$.
\end{description}
This proves the theorem.
\end{proof}

We finish this section by recalling the pattern avoidance criteria for the regularity of $\Gwh{w}$ established in~\cite{CHP}. For further details, we refer the reader to~\cite{CHP}. Let $h$ be a Hessenberg function on $[n]$. For an $h$-admissible permutation $w\in \S_n$,
we say that $w$ contains the associated pattern $\hpat{\pi}$ if there exist indices $i<j<k<\ell$ satisfying the following conditions:
\begin{enumerate}
    \item \textbf{$\hpat{2143}$}: $w(j)<w(i)<w(\ell)<w(k)$, with $i<j<k<\ell\leq h(i)$.

    \item \textbf{$\hpat{1324}$}: $w(i)<w(k)<w(j)<w(\ell)$, with $i<j<k<\ell\leq h(j)$ and $k \leq h(i)$.

    \item \textbf{$\hpat{1243}$}: $w(i)<w(j)<w(\ell)<w(k)$, with $i<j<k<\ell\leq h(j)$ and $j \leq h(i)<\ell$.

    \item \textbf{$\hpat{2134}$}: $w(j)<w(i)<w(k)<w(\ell)$, with $i<j<k<\ell\leq h(k)$ and $k \leq h(i)<\ell$.

    \item \textbf{$\hpat{1423}$}: $w(i)<w(k)<w(\ell)<w(j)$, with $i<j<k<\ell\leq h(j)$ and $k \leq h(i)<\ell$.

    \item \textbf{$\hpat{2314}$}: $w(k)<w(i)<w(j)<w(\ell)$, with $i<j<k<\ell\leq h(j)$ and $k \leq h(i)<\ell$.

    \item \textbf{$\hpat{2413}$}: $w(k)<w(i)<w(\ell)<w(j)$, with $i<j\leq h(i)<k\leq h(j)<\ell\leq h(k)$.
\end{enumerate}

\begin{theorem}\cite[Theorem~C]{CHP}\label{thm:pattern_avoidance}
    For an $h$-admissible permutation $w$, the graph $\Gwh{w}$ is regular if and only if $w$ avoids all  associated patterns
    $$\hpat{2143},\,\hpat{1324},\,\hpat{1243},\,\hpat{2134},\,\hpat{1423},\,\hpat{2314},\,\hpat{2413}.\,$$
\end{theorem}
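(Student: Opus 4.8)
The plan is to collapse the global regularity condition to a single degree comparison, and then to read off the pattern conditions from an explicit combinatorial description of the degrees involved. Since $w$ is $h$-admissible, every $u\in[w,w_0]$ satisfies $w\le_h u\le_h w_0$ (by \cite[Proposition~2.11]{CHL} and \cite[Lemma~3.2]{CHP}), and Lemma~\ref{lem:CHP} gives $\deg_{w,h}(u)\le\deg_{w,h}(v)$ whenever $u<_h v$. Hence $\deg_{w,h}$ is non-decreasing along the $h$-Bruhat order, attaining its minimum at $w$ and its maximum at $w_0$. Combined with $\deg_{w,h}(w)=\dim\Owo{w,h}$, this shows (essentially Corollary~\ref{cor regularity}) that $\Gwh{w}$ is regular if and only if $\deg_{w,h}(w)=\deg_{w,h}(w_0)$. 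Thus the theorem reduces to the combinatorial equivalence
\[
\deg_{w,h}(w)=\deg_{w,h}(w_0)\iff w\text{ avoids }\hpat{2143},\ \hpat{1324},\ \hpat{1243},\ \hpat{2134},\ \hpat{1423},\ \hpat{2314},\ \hpat{2413}.
\]

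Next I would make both sides explicit. Writing $E_{w,h}(w)=\{(i,j):i<j\le h(i),\ w(i)<w(j)\}$ and $E_{w,h}(w_0)=\{(i,j):i<j\le h(i),\ w_0(i,j)\ge w\}$, the defect $\delta(w)\colonequals\deg_{w,h}(w_0)-\deg_{w,h}(w)=|E_{w,h}(w_0)|-|E_{w,h}(w)|$ is a nonnegative integer, and regularity is exactly $\delta(w)=0$. The membership $w_0(i,j)\ge w$ can be tested with the tableau criterion of Proposition~\ref{prop:Bruhat order}, so $\delta(w)$ becomes a purely combinatorial count in the one-line notation of $w$, governed by the relative order of the values of $w$ on four-element index sets together with the Hessenberg-window constraints $j\le h(i)$.

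The heart of the argument is to pin down which local four-element configurations contribute to $\delta(w)$. For this I would follow the injective maps $\phi_{uv}$ of Lemma~\ref{lem:CHP} along a saturated chain $w=u_0\lessdot_h u_1\lessdot_h\cdots\lessdot_h u_m=w_0$: the degree strictly increases at a step $u\lessdot_h v=u(a,b)$ precisely when $\phi_{uv}$ fails to be surjective, and the two exceptional branches in the definition of $\phi_{uv}$ (namely $i=a,\ j>b$ with $(b,j)\notin E_{w,h}(u)$, and $i<a,\ j=b$ with $(i,a)\notin E_{w,h}(u)$) tie every surplus edge to a length-four configuration among four positions whose relative values and Hessenberg constraints can be read off directly. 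The forward direction is then the easier one: given an occurrence of one of the seven patterns, I exhibit the witnessing index quadruple and the transposition creating a surplus edge (an $h$-pair in $E_{w,h}(w_0)$ with no $\phi$-preimage), forcing $\delta(w)>0$.

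The main obstacle is the converse, the completeness of the list: I must show that every failure of surjectivity, hence every contribution to $\delta(w)$, is accounted for by one of these seven and only these seven patterns. This demands an exhaustive case analysis organized by how the four relevant positions sit relative to the Hessenberg function — whether all four lie in a single window ($\ell\le h(i)$, as in $\hpat{2143}$), whether they straddle one boundary ($k\le h(i)<\ell$, as in $\hpat{2134}$, $\hpat{1423}$, $\hpat{2314}$, $\hpat{1243}$), or whether they spread across several windows ($i<j\le h(i)<k\le h(j)<\ell\le h(k)$, as in $\hpat{2413}$) — and within each regime a bookkeeping of the admissible relative orderings of $w(i),w(j),w(k),w(\ell)$. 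The delicate points are to avoid double-counting configurations and to verify that the $h$-admissibility of $w$ together with the side conditions $j\le h(i)$, $k\le h(i)$, and so on, excludes every ordering except those in the seven patterns; this is exactly where the fine distinctions between patterns that differ only in their Hessenberg constraints must be checked.
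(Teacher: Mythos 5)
This statement is not proved in the paper at all: it is imported verbatim as \cite[Theorem~C]{CHP}, so there is no internal argument to compare yours against. Your opening reduction is nevertheless correct and coincides with Corollary~\ref{cor regularity}: since $h$-admissibility forces $w\leq_h u\leq_h w_0$ for every $u\in[w,w_0]$ and Lemma~\ref{lem:CHP} makes $\deg_{w,h}$ non-decreasing along $\leq_h$, regularity of $\Gwh{w}$ is indeed equivalent to the single equality $\deg_{w,h}(w)=\deg_{w,h}(w_0)$, and your descriptions of $E_{w,h}(w)$ and of the defect $\delta(w)$ are accurate.

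Beyond that point, however, the proposal is a plan rather than a proof, and the gap is precisely where the content of the theorem lives. For the forward direction you assert that each of the seven patterns produces a witnessing quadruple and a surplus edge of $E_{w,h}(w_0)$ with no $\phi$-preimage, but you exhibit none of these witnesses; checking, say for $\hpat{2413}$, that the relevant transposition $(i,j)$ satisfies $j\leq h(i)$ and $w_0(i,j)\geq w$ via Proposition~\ref{prop:Bruhat order} while escaping the image of the composed injections is a concrete computation that must be done separately for each pattern. For the converse you explicitly defer the completeness of the list to ``an exhaustive case analysis'' that you do not perform. There is also a structural issue you gloss over: your mechanism detects a surjectivity failure of $\phi_{u_t u_{t+1}}$ at an \emph{intermediate} vertex $u_t$ of a saturated $h$-Bruhat chain, whereas the seven patterns are stated in the one-line notation of $w$ itself; transporting a local failure at $u_t$ back to a four-index configuration of $w$ (or, alternatively, comparing $E_{w,h}(w_0)$ with the image of $E_{w,h}(w)$ under the full composite and classifying the complement) is a nontrivial step that your outline does not supply. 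As written, neither implication is established.
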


Combining Theorems~\ref{thm_main_Hess}, \ref{thm_regular_implies_smoothness}, and \ref{thm:pattern_avoidance}, we get the following.

\begin{corollary}\label{cor_regular_and_patterns_admissible}
    For $w\in \S_n$, let $\widetilde{w}$ be the $h$-admissible permutation corresponding to $w$. Then the following statements are equivalent:
    \begin{enumerate}
        \item $\Gwh{\widetilde{w}}$ is regular,
        \item $\Ow{\widetilde{w}}\cap\Hess(s,h)$ is smooth, and
        \item $\widetilde{w}$ avoids all associated patterns:
        $$\hpat{2143},\,\hpat{1324},\,\hpat{1243},\,\hpat{2134},\,\hpat{1423},\,\hpat{2314},\,\hpat{2413}.\,$$
    \end{enumerate}In particular, if one of the above conditions holds, then $\Ow{w,h}$ is smooth.
\end{corollary}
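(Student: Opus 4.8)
The plan is to assemble the three equivalences directly from the cited theorems, all applied to the $h$-admissible representative $\widetilde{w}$ rather than to $w$ itself, and then to deduce the final smoothness assertion via Theorem~\ref{thm_regular_implies_smoothness}(1). The single point requiring care is that each of the three conditions (1)--(3) concerns $\widetilde{w}$, so I would begin by recording that $\widetilde{w}$ is $h$-admissible by its very construction in Proposition~\ref{prop:prop_h-admissible}; this is precisely the hypothesis under which Theorem~\ref{thm:pattern_avoidance} becomes available.

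With that in place, the equivalence of (1) and (2) is immediate from Theorem~\ref{thm_main_Hess}, applied with $w$ replaced by $\widetilde{w}$: that theorem asserts, for \emph{any} permutation, that $\Gamma(\Ow{\widetilde{w}}\cap\Hess(s,h))$ is regular if and only if $\Ow{\widetilde{w}}\cap\Hess(s,h)$ is smooth, so no admissibility input is needed at this step. The equivalence of (1) and (3) is then exactly the content of Theorem~\ref{thm:pattern_avoidance} for the $h$-admissible permutation $\widetilde{w}$: the graph $\Gwh{\widetilde{w}}$ is regular if and only if $\widetilde{w}$ avoids all seven associated patterns $\hpat{2143},\hpat{1324},\hpat{1243},\hpat{2134},\hpat{1423},\hpat{2314},\hpat{2413}$. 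Chaining these two biconditionals gives the full cycle $(1)\Leftrightarrow(2)\Leftrightarrow(3)$.

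For the concluding ``in particular'' statement, I would observe that whenever any one of (1)--(3) holds, condition (1)---regularity of $\Gamma(\Ow{\widetilde{w}}\cap\Hess(s,h))$---holds in particular, and Theorem~\ref{thm_regular_implies_smoothness}(1) then yields directly that the Hessenberg Schubert variety $\Ow{w,h}$, for the \emph{original} $w$, is smooth. This is the step that transports a statement about the representative $\widetilde{w}$ to a conclusion about $w$, resting on the identification $\Ow{w,h}=u(\overline{\Owo{\widetilde{w}}\cap\Hess(u^{-1}su,h)})$ with $u=w\widetilde{w}^{-1}$ from Proposition~\ref{prop:prop_h-admissible} that underlies Theorem~\ref{thm_regular_implies_smoothness}(1).

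I expect essentially no genuine obstacle here, since the corollary is a synthesis of three already-established results and the proof amounts to invoking them in the correct order and for the correct permutation. The only place demanding vigilance is bookkeeping: keeping track of which inputs hold for arbitrary permutations (Theorem~\ref{thm_main_Hess}) versus only for $h$-admissible ones (Theorem~\ref{thm:pattern_avoidance}), and correctly transferring smoothness from $\widetilde{w}$ to $w$ at the very end.
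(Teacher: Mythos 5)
Your proposal is correct and matches the paper exactly: the paper offers no separate proof beyond the remark that the corollary follows by combining Theorems~\ref{thm_main_Hess}, \ref{thm_regular_implies_smoothness}, and \ref{thm:pattern_avoidance}, and your writeup is precisely the intended assembly --- $(1)\Leftrightarrow(2)$ from Theorem~\ref{thm_main_Hess} applied to $\widetilde{w}$, $(1)\Leftrightarrow(3)$ from Theorem~\ref{thm:pattern_avoidance} (which needs the $h$-admissibility of $\widetilde{w}$), and the final transfer to $\Ow{w,h}$ via Theorem~\ref{thm_regular_implies_smoothness}(1). Your bookkeeping about which results require admissibility and which do not is also accurate.
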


\begin{remark}
    When $w$ is not $h$-admissible, the pattern $\hpat{2413}$ is replaced by four patterns: $\hpat{25314}$, $\hpat{24315}$, $\hpat{14325}$, and $\hpat{15324}$. If $w$ avoids all ten patterns, then $\Ow{w,h}$ is smooth; see Definition~4.9 and Theorem~4.14 in \cite{CHP} for details. However, this does not imply that the graph $\Gwh{w}$ is regular. Note that the graph $\Gamma_{w,h}$ in \cite{CHP} is the subgraph of $\Gamma(\Hess(S,h))$ induced by $\Ow{w,h}^T$, so we have $\Gamma(\Ow{w,h})\subset \Gamma_{w,h}\subset \Gamma(\Ow{w}\cap\Hess(s,h))$, where each inclusion may be proper. Indeed, the second equality holds if and only if $w$ is $h$-admissible.
\end{remark}

\section{Hessenberg varieties of arbitrary type} \label{sec:arbitrary}

In this section, we generalize results in the previous section to the case of regular semisimple Hessenberg varieties of arbitrary type.
   Let $G$ be a reductive algebraic group over $\mathbb C$. Fix a Borel subgroup $B$ of $G$ and a maximal torus $T$ in $B$. Let $W$ be the Weyl group. Denote by $\mathfrak g$ and $\mathfrak b$ the Lie algebra of $G$ and $B$, respectively.

We first recall the notion of Hessenberg varieties in arbitrary Lie types. We refer the readers to~\cite{DeMPS} for more details.
A \emph{Hessenberg space} $H$ is a $B$-submodule of $\mathfrak g$ containing $\mathfrak b$.  For an element $x \in \mathfrak g$ and a Hessenberg space $H$, define a subvariety
$$\Hess(x, H):=\{gB \mid \Ad(g^{-1})x \in H\}$$
of the flag variety $G/B$, called the \emph{Hessenberg variety} determined by $x$ and $H$. When $x$ is a regular semisimple element $s$ of $G$, we call $\Hess(s,H)$ a \emph{regular semisimple Hessenberg variety}.  From now on, we will assume that $s$ is a regular semisimple element of $G$.

Fix a Hessenberg space $H$.
Write $$H=\mathfrak b \oplus  \left(\bigoplus_{\alpha \in M}\mathfrak g_{-\alpha}\right)$$ for some subset $M=M(H) $ of the positive root system $\Phi^+$.

Note that $\Hess(s,H)$ is a GKM variety, and its associated graph \(\Gamma(\Hess(s, H))\) has vertex set~\(W\), where two vertices \(v\) and \(w\) are connected by an edge if \(v = w s_\alpha\) for some \(\alpha \in M\) (\cite[Proposition 8.2]{AHMMS}). We depict the GKM graph $\Gamma(\Hess(s,H))$ for $G$ is of type $C_2$ and $M = M(H) = \{\alpha_1, \alpha_2, \alpha_1 + \alpha_2\}$ in Figure~\ref{fig_GKM_C2}.  Moreover, the cohomology $H^{2k}(\Hess(s,H);\C)$ is a $W$-module for each $k \geq 0$. See~\cite[Section~8.3]{AHMMS} for details.
\begin{figure}
    \centering
\begin{tikzpicture}
    \foreach \x/\y/\z/\w in {e/-90/below/$e$, s1/-45/right/$s_1$, s1s2/0/right/$s_1s_2$, s1s2s1/45/right/$s_1s_2s_1$,
    s2s1s2s1/90/above/$s_2s_1s_2s_1$, s2s1s2/135/left/$s_2s_1s_2$, s2s1/180/left/$s_2s_1$, s2/-135/left/$s_2$}{
        \node[shape = circle, fill=none, draw=black, inner sep = 0pt , minimum size=1.2mm, label=\z:\w] (\x) at (\y:2) {};
    }
    \draw[line width= 0.4ex, green!50!black,  semitransparent]
        (e)--(s1)
        (s2)--(s2s1)
        (s1s2)--(s1s2s1)
        (s2s1s2)--(s2s1s2s1);

    \draw[line width= 0.4ex, blue,  semitransparent]
        (s1)--(s1s2)
        (e)--(s2)
        (s2s1)--(s2s1s2)
        (s1s2s1)--(s2s1s2s1);

    \draw[line width= 0.4ex, red,  semitransparent]
        (e)--(s2s1s2)
        (s1)--(s2s1s2s1)
        (s2)--(s1s2)
        (s2s1)--(s1s2s1);

    \begin{scope}[xshift=5cm, yshift=-1cm]
    \draw[line width= 0.4ex, green!50!black,  semitransparent] (0,2)--(1,2)
        node[at end, right, black, opacity = 1] {$\pm \alpha_1$};

    \draw[line width= 0.4ex, blue,  semitransparent] (0,1)--(1,1)
        node[at end, right, black, opacity=1] {$\pm \alpha_2$};

    \draw[line width= 0.4ex, red,  semitransparent] (0,0)--(1,0)
        node[at end, right, black, opacity=1] {$\pm (\alpha_1+\alpha_2)$};
    \end{scope}
\end{tikzpicture}
    \caption{The GKM graph $\Gamma(\Hess(s,H))$ for $G$ is of type $C_2$ and $M = M(H) = \{\alpha_1, \alpha_2, \alpha_1 + \alpha_2\}$}
    \label{fig_GKM_C2}
\end{figure}
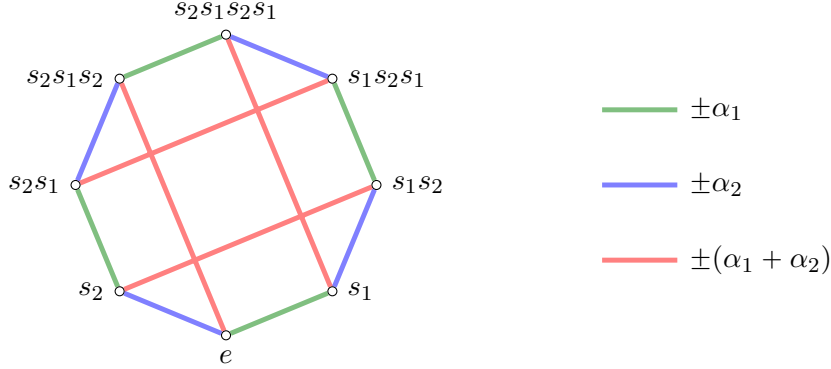

As in the case when $G$ is of type $A$, the choice of $H$ determines a partition on the Weyl group $W$. We recall its definition and properties. For details, see  \cite{ST}.
Let $R$ be a subset of  $\Phi^+$.
For a subset $S$ of $R$, we say that $S$ is $R$-\emph{closed} if $\alpha+\beta \in S$ whenever $\alpha, \beta \in S$ and $\alpha+\beta \in R$.

For $w \in W$, we  define a subset $N(w)$ of $\Phi^+$ by $N(w)=\{\alpha \in \Phi^+ \mid w(\alpha) \in -\Phi^+\}$. Then $N(w)$ and its complement in $\Phi^+$ are $\Phi^+$-closed. Conversely, any subset $S$ of $\Phi^+$ is of this form if  $S$ and its complement in $\Phi^+$ are $\Phi^+$-closed.

Let $M=M(H)$ for some Hessenberg space $H$.
A subset $S$ of $M$ is said to be \emph{of Weyl type} if $S$ and its complement in $M$ are $M$-closed.
Then for any $w \in W$, the subset $N(w) \cap M$ of $M$ is of Weyl type. Conversely, any subset of $M$ of Weyl type is of this form (see Proposition 6.1 of \cite{ST}).
\begin{example}\label{example_A2}
Let $G = \GL_3(\C)$, which is of type $A_2$. Denote two simple roots by $\alpha_1, \alpha_2$. Consider a Hessenberg space $H = \mathfrak{b} \oplus \mathfrak{g}_{-\alpha_1} \oplus \mathfrak{g}_{-\alpha_2}$, that is, $M = \{\alpha_1, \alpha_2\}$.
One can see that any subset of $M$ is of Weyl type.
For elements $w$ in the Weyl group $W = \mathfrak{S}_3$, the sets $N(w)$ and $N(w) \cap M$ are given as follows.
\begin{center}
\begin{tabular}{c|cc}
\toprule
$w$ & $N(w)$ & $N(w) \cap M$ \\
\midrule
$123 = e$ & $\emptyset$ & $\emptyset$\\
$132 = s_2$ & $\{\alpha_2\}$ & $\{\alpha_2\}$\\
$213 = s_1$ & $\{\alpha_1\}$ & $\{\alpha_1\}$\\
$231 = s_1s_2$ & $\{\alpha_2, \alpha_1 + \alpha_2\}$ & $\{\alpha_2\}$\\
$312 = s_2 s_1$ & $\{\alpha_1, \alpha_1 + \alpha_2\}$ & $\{\alpha_1\}$\\
$321 = s_1 s_2 s_1$ & $\{\alpha_1, \alpha_2, \alpha_1 + \alpha_2\}$ & $\{\alpha_1, \alpha_2\}$ \\
\bottomrule
\end{tabular}
\end{center}
\end{example}

\begin{example}\label{example_C2}
Suppose that $G$ is of type $C_2$. Denote two simple roots by $\alpha_1, \alpha_2$ and suppose that $\alpha_2$ is a long root. Consider a Hessenberg space $H$ determined by $M = M(H) = \{\alpha_1, \alpha_2, \alpha_1 + \alpha_2\}$.
One can see that not every subset of $M$ is of Weyl type. In particular, the set $\{\alpha_1 + \alpha_2\}$ is not of Weyl type, since its complement is not $M$-closed.
For elements $w$ in the Weyl group $W$, the sets $N(w)$ and $N(w) \cap M$ are given as follows.
\begin{center}
\begin{tabular}{c|cc}
\toprule
$w$ & $N(w)$ & $N(w) \cap M$ \\
\midrule
$e$ & $\emptyset$ & $\emptyset$\\
$s_1$ & $\{\alpha_1\}$ & $\{\alpha_1\}$\\
$s_2$ & $\{\alpha_2\}$ & $\{\alpha_2\}$\\
$s_2s_1$ & $\{\alpha_1, 2\alpha_1 + \alpha_2\}$ & $\{\alpha_1\}$\\
$s_1s_2$ & $\{\alpha_2, \alpha_1 + \alpha_2\}$ & $\{\alpha_2, \alpha_1 + \alpha_2\}$\\
$s_1s_2s_1$ & $\{\alpha_1, \alpha_1 + \alpha_2, 2\alpha_1 + \alpha_2\}$ & $\{\alpha_1, \alpha_1+\alpha_2\}$\\
$s_2s_1s_2$ & $\{\alpha_2, \alpha_1 +\alpha_2, 2\alpha_1+\alpha_2\}$ & $\{\alpha_2, \alpha_1+\alpha_2\}$\\
$s_1s_2s_1s_2$ & $\{\alpha_1,\alpha_2,\alpha_1+\alpha_2,2\alpha_1+\alpha_2\}$ & $\{\alpha_1, \alpha_2,\alpha_1+\alpha_2\}$\\
\bottomrule
\end{tabular}
\end{center}

\end{example}
Denote by $\mathcal W_H$ the set of all subsets of $M$ of Weyl type.
For $S \in \mathcal W_H$, set
$$\mathcal W(S,H):=\{w \in W \mid N(w) \cap M =S\}.$$
Then $\mathcal W(S,H)$, where $S \in \mathcal W_H$, gives  a partition of $W$:
\begin{equation}\label{eq:partition_W}
    W =\bigsqcup_{S \in \mathcal W_H} \mathcal W(S,H).
\end{equation}

Using the same arguments as in the proof of \cite[ Lemma 2.11]{HP},
for $S \in \mathcal W_H$, we have
\begin{equation}\label{e.B interval}
 \mathcal W(S,H) = [z_S, w_S],   \text{ a (left) weak Bruhat interval,}
\end{equation} for some $z_S, w_S \in W$.

We call an element $w \in W$ \emph{$H$-admissible} if $w=w_S$ for some $S \in \mathcal W_H$.
We illustrate the notion of $H$-admissible elements using Examples~\ref{example_A2} and~\ref{example_C2} as follows.
\begin{example}
Continuing Example~\ref{example_A2}, the set $\mathcal W_H$ is given by
\[
\mathcal W_H = \{\emptyset, \{\alpha_1\}, \{\alpha_2\}, \{\alpha_1, \alpha_2\}\}.
\]
For each $S \in \mathcal W_H$, we obtain $\mathcal W(S,H)$, $z_S$, and $w_S$ as follows:
\begin{center}
\begin{tabular}{c|ccc}
    \toprule
    $S$ & $\mathcal W(S, H)$ & $z_S$ & $w_S$  \\
    \midrule
    $\emptyset$ & $\{123\}$ & $123$ & $123$  \\
    $\{\alpha_1\}$ & $\{213, 312\}$ & $213$ & $312$\\
    $\{\alpha_2\}$ & $\{132, 231\}$ & $132$ & $231$\\
    $\{\alpha_1,\alpha_2\}$ & $\{312\}$ & $312$ & $312$\\
    \bottomrule
\end{tabular}
\end{center}
\end{example}
\begin{example} \label{example_C2 tilde}
Continuing Example~\ref{example_C2}, the set $\mathcal W_H$ is given by
\[
\mathcal W_H = \{\emptyset, \{\alpha_1\}, \{\alpha_2\},
\{\alpha_1, \alpha_1 + \alpha_2\}, \{\alpha_2, \alpha_1 + \alpha_2\},
\{\alpha_1, \alpha_2, \alpha_1 + \alpha_2\}\}.
\]
For each $S \in \mathcal W_H$, we obtain $\mathcal W(S,H)$, $z_S$, and $w_S$ as follows:
\begin{center}
\begin{tabular}{c|ccc}
    \toprule
    $S$ & $\mathcal W(S, H)$ & $z_S$ & $w_S$  \\
    \midrule
    $\emptyset$ & $\{e\}$ & $e$ & $e$ \\
    $\{\alpha_1\}$ & $\{s_1, s_2s_1\}$ & $s_1$ & $s_2s_1$ \\
    $\{\alpha_2\}$ & $\{s_2\}$ & $s_2$ & $s_2$ \\
    $\{\alpha_1, \alpha_1 + \alpha_2\}$ & $\{s_1s_2s_1\}$ & $s_1s_2s_1$ & $s_1s_2s_1$\\
    $\{\alpha_2, \alpha_1 + \alpha_2\}$ & $\{s_1s_2, s_2s_1s_2\}$ & $s_1s_2$ & $s_2s_1s_2$\\
    $\{\alpha_1, \alpha_2, \alpha_1 + \alpha_2\}$ & $\{s_1s_2s_1s_2\}$ & $s_1s_2s_1s_2$ & $s_1s_2s_1s_2$\\
    \bottomrule
\end{tabular}
\end{center}
\end{example}

\begin{remark}
The $H$-admissibility is the same as the $h$-admissibility  when $G$ is of type $A$ if we identify a Hessenberg function $h$ with a Hessenberg space $H$ appropriately. By \cite[Proposition~6.3]{ST}, $z_S$ is the unique element $w \in \mathcal W(S,H)$ such that $$S=N(w) \cap M \text{ and }w^{-1}(-\Pi) \cap \Phi^+ \subset M.$$ Here, $\Pi$ is the set of simple roots. According to the proof of \cite[Corollary~A.3]{HP}, it follows that $w_S=w_0z_{M\setminus S}$.
On the other hand, as in the proof of \cite[Proposition~3.2]{CHL2}, there is a one-to-one correspondence between $$\{w \in \mathfrak S_n \mid w^{-1}(w(j) - 1) \leq  h(j) \text{ for }j \in [n] \}$$ and $$\{w \in \mathfrak S_n \mid w^{-1}(w(j) + 1)\leq h(j) \text{ for }w(j) \in [n -1] \},$$ and the bijection  is given by the multiplication  $w \mapsto w_0w$.
\end{remark}

Define Schubert varieties and  Hessenberg Schubert varieties as in the case when $G$ is of type $A$ (cf. Section~\ref{sec:Prelim}): For $w \in W$, let $\Omega_w^{\circ} = BwB/B$, $\Omega_w=\overline{\Omega_w^{\circ}}$, and let
\[
\Omega_{w,H} =\overline{\Omega_w^{\circ} \cap \Hess(s,H)}.
\]

As in the case when $G$ is of type $A$, the intersection $\Omega_{w} \cap \Hess(s,H)$ is a GKM variety and  the graph $\Gamma(\Omega_{w} \cap \Hess(s,H))$ is the subgraph of $\Gamma(\Hess(s,H))$ induced by the vertex set~$[w,w_0]$ for any $w \in W$.
In Figure~\ref{fig_GKM_C2_subgraphs}, we present the graphs $\Gamma(\Omega_{{w}} \cap \Hess(s,H))$ for ${w} = w_S$ in Example~\ref{example_C2 tilde}.

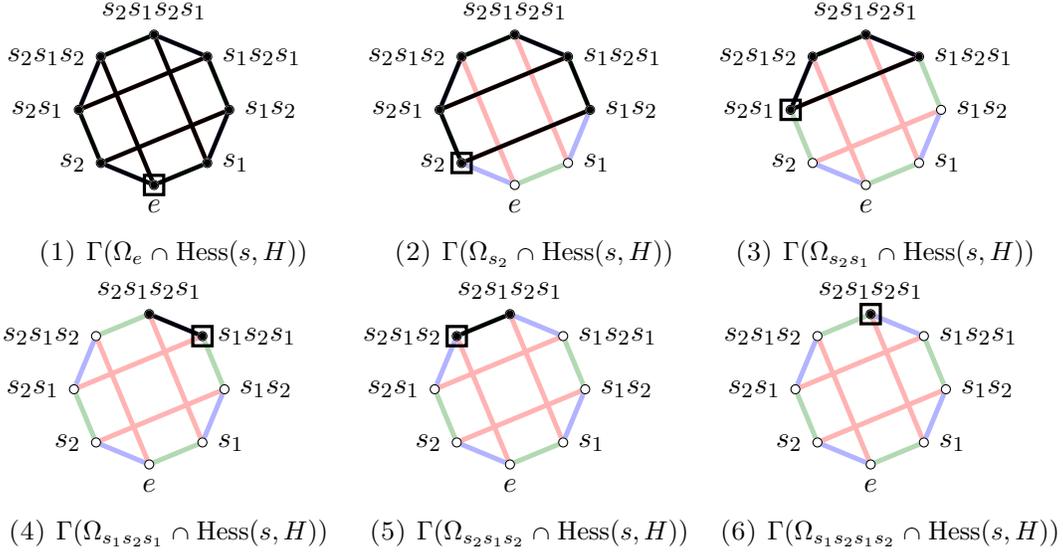
\begin{figure}
\begin{subfigure}[b]{0.3\textwidth}
\begin{tikzpicture}[scale=0.5]
    \foreach \x/\y/\z/\w in {e/-90/below/$e$, s1/-45/right/$s_1$, s1s2/0/right/$s_1s_2$, s1s2s1/45/right/$s_1s_2s_1$,
    s2s1s2s1/90/above/$s_2s_1s_2s_1$, s2s1s2/135/left/$s_2s_1s_2$, s2s1/180/left/$s_2s_1$, s2/-135/left/$s_2$}{
        \node[shape = circle, fill=none, draw=black, inner sep = 0pt , minimum size=1.2mm, label=\z:\w] (\x) at (\y:2) {};
    }
    \draw[line width= 0.4ex, green!50!black , opacity = 0.3]
        (e)--(s1)
        (s2)--(s2s1)
        (s1s2)--(s1s2s1)
        (s2s1s2)--(s2s1s2s1);

    \draw[line width= 0.4ex, blue, opacity = 0.3]
        (s1)--(s1s2)
        (e)--(s2)
        (s2s1)--(s2s1s2)
        (s1s2s1)--(s2s1s2s1);

    \draw[line width= 0.4ex, red, opacity = 0.3]
        (e)--(s2s1s2)
        (s1)--(s2s1s2s1)
        (s2)--(s1s2)
        (s2s1)--(s1s2s1);

    \draw (e) node[rectangle, draw, very thick] {};

    \foreach \x in {(e),(s1),(s1s2), (s1s2s1), (s2s1s2s1),
    (s2s1s2), (s2s1), (s2)}
        {\fill \x circle (0.1cm);}

    \foreach \x/\y in {(e)/(s1), (s1)/(s1s2), (s1s2)/(s1s2s1),
        (e)/(s2), (s2)/(s1s2), (s2)/(s2s1), (s2s1)/(s1s2s1),
        (e)/(s2s1s2), (s1)/(s2s1s2s1), (s2s1)/(s2s1s2),
        (s2s1s2)/(s2s1s2s1), (s1s2s1)/(s2s1s2s1)}{
    \draw[ultra thick] \x -- \y;
  }
\end{tikzpicture}
\caption{$\Gamma(\Omega_{e} \cap \Hess(s,H))$}
\end{subfigure}
\begin{subfigure}[b]{0.3\textwidth}
\begin{tikzpicture}[scale=0.5]
    \foreach \x/\y/\z/\w in {e/-90/below/$e$, s1/-45/right/$s_1$, s1s2/0/right/$s_1s_2$, s1s2s1/45/right/$s_1s_2s_1$,
    s2s1s2s1/90/above/$s_2s_1s_2s_1$, s2s1s2/135/left/$s_2s_1s_2$, s2s1/180/left/$s_2s_1$, s2/-135/left/$s_2$}{
        \node[shape = circle, fill=none, draw=black, inner sep = 0pt , minimum size=1.2mm, label=\z:\w] (\x) at (\y:2) {};
    }
    \draw[line width= 0.4ex, green!50!black , opacity = 0.3]
        (e)--(s1)
        (s2)--(s2s1)
        (s1s2)--(s1s2s1)
        (s2s1s2)--(s2s1s2s1);

    \draw[line width= 0.4ex, blue, opacity = 0.3]
        (s1)--(s1s2)
        (e)--(s2)
        (s2s1)--(s2s1s2)
        (s1s2s1)--(s2s1s2s1);

    \draw[line width= 0.4ex, red, opacity = 0.3]
        (e)--(s2s1s2)
        (s1)--(s2s1s2s1)
        (s2)--(s1s2)
        (s2s1)--(s1s2s1);

    \draw (s2) node[rectangle, draw, very thick] {};

    \foreach \x in {(s1s2), (s1s2s1), (s2s1s2s1),
    (s2s1s2), (s2s1), (s2)}
        {\fill \x circle (0.1cm);}

    \foreach \x/\y in {(s1s2)/(s1s2s1),
         (s2)/(s1s2), (s2)/(s2s1), (s2s1)/(s1s2s1),
           (s2s1)/(s2s1s2),
        (s2s1s2)/(s2s1s2s1), (s1s2s1)/(s2s1s2s1)}{
    \draw[ultra thick] \x -- \y;
  }
\end{tikzpicture}
\caption{$\Gamma(\Omega_{s_2} \cap \Hess(s,H))$}
\end{subfigure}\begin{subfigure}[b]{0.3\textwidth}
\begin{tikzpicture}[scale=0.5]
    \foreach \x/\y/\z/\w in {e/-90/below/$e$, s1/-45/right/$s_1$, s1s2/0/right/$s_1s_2$, s1s2s1/45/right/$s_1s_2s_1$,
    s2s1s2s1/90/above/$s_2s_1s_2s_1$, s2s1s2/135/left/$s_2s_1s_2$, s2s1/180/left/$s_2s_1$, s2/-135/left/$s_2$}{
        \node[shape = circle, fill=none, draw=black, inner sep = 0pt , minimum size=1.2mm, label=\z:\w] (\x) at (\y:2) {};
    }
    \draw[line width= 0.4ex, green!50!black , opacity = 0.3]
        (e)--(s1)
        (s2)--(s2s1)
        (s1s2)--(s1s2s1)
        (s2s1s2)--(s2s1s2s1);

    \draw[line width= 0.4ex, blue, opacity = 0.3]
        (s1)--(s1s2)
        (e)--(s2)
        (s2s1)--(s2s1s2)
        (s1s2s1)--(s2s1s2s1);

    \draw[line width= 0.4ex, red, opacity = 0.3]
        (e)--(s2s1s2)
        (s1)--(s2s1s2s1)
        (s2)--(s1s2)
        (s2s1)--(s1s2s1);

    \draw (s2s1) node[rectangle, draw, very thick] {};

    \foreach \x in {(s1s2s1), (s2s1s2s1),
    (s2s1s2), (s2s1)}
        {\fill \x circle (0.1cm);}

    \foreach \x/\y in {(s2s1)/(s1s2s1),
        (s2s1)/(s2s1s2),
        (s2s1s2)/(s2s1s2s1), (s1s2s1)/(s2s1s2s1)}{
    \draw[ultra thick] \x -- \y;
  }
\end{tikzpicture}
\caption{$\Gamma(\Omega_{s_2s_1} \cap \Hess(s,H))$}
\end{subfigure}
\begin{subfigure}[b]{0.3\textwidth}
\begin{tikzpicture}[scale=0.5]
    \foreach \x/\y/\z/\w in {e/-90/below/$e$, s1/-45/right/$s_1$, s1s2/0/right/$s_1s_2$, s1s2s1/45/right/$s_1s_2s_1$,
    s2s1s2s1/90/above/$s_2s_1s_2s_1$, s2s1s2/135/left/$s_2s_1s_2$, s2s1/180/left/$s_2s_1$, s2/-135/left/$s_2$}{
        \node[shape = circle, fill=none, draw=black, inner sep = 0pt , minimum size=1.2mm, label=\z:\w] (\x) at (\y:2) {};
    }
    \draw[line width= 0.4ex, green!50!black , opacity = 0.3]
        (e)--(s1)
        (s2)--(s2s1)
        (s1s2)--(s1s2s1)
        (s2s1s2)--(s2s1s2s1);

    \draw[line width= 0.4ex, blue, opacity = 0.3]
        (s1)--(s1s2)
        (e)--(s2)
        (s2s1)--(s2s1s2)
        (s1s2s1)--(s2s1s2s1);

    \draw[line width= 0.4ex, red, opacity = 0.3]
        (e)--(s2s1s2)
        (s1)--(s2s1s2s1)
        (s2)--(s1s2)
        (s2s1)--(s1s2s1);

    \draw (s1s2s1) node[rectangle, draw, very thick] {};

    \foreach \x in {(s1s2s1), (s2s1s2s1)
    }
        {\fill \x circle (0.1cm);}

    \foreach \x/\y in {(s1s2s1)/(s2s1s2s1)}{
    \draw[ultra thick] \x -- \y;
  }
\end{tikzpicture}
\caption{$\Gamma(\Omega_{s_1s_2s_1} \cap \Hess(s,H))$}
\end{subfigure}
\begin{subfigure}[b]{0.3\textwidth}
\begin{tikzpicture}[scale=0.5]
    \foreach \x/\y/\z/\w in {e/-90/below/$e$, s1/-45/right/$s_1$, s1s2/0/right/$s_1s_2$, s1s2s1/45/right/$s_1s_2s_1$,
    s2s1s2s1/90/above/$s_2s_1s_2s_1$, s2s1s2/135/left/$s_2s_1s_2$, s2s1/180/left/$s_2s_1$, s2/-135/left/$s_2$}{
        \node[shape = circle, fill=none, draw=black, inner sep = 0pt , minimum size=1.2mm, label=\z:\w] (\x) at (\y:2) {};
    }
    \draw[line width= 0.4ex, green!50!black , opacity = 0.3]
        (e)--(s1)
        (s2)--(s2s1)
        (s1s2)--(s1s2s1)
        (s2s1s2)--(s2s1s2s1);

    \draw[line width= 0.4ex, blue, opacity = 0.3]
        (s1)--(s1s2)
        (e)--(s2)
        (s2s1)--(s2s1s2)
        (s1s2s1)--(s2s1s2s1);

    \draw[line width= 0.4ex, red, opacity = 0.3]
        (e)--(s2s1s2)
        (s1)--(s2s1s2s1)
        (s2)--(s1s2)
        (s2s1)--(s1s2s1);

    \draw (s2s1s2) node[rectangle, draw, very thick] {};

    \foreach \x in {
    (s2s1s2), (s2s1s2s1)}
        {\fill \x circle (0.1cm);}

    \foreach \x/\y in {
        (s2s1s2)/(s2s1s2s1) }{
    \draw[ultra thick] \x -- \y;
  }
\end{tikzpicture}
\caption{$\Gamma(\Omega_{s_2s_1s_2} \cap \Hess(s,H))$}
\end{subfigure}
\begin{subfigure}[b]{0.3\textwidth}
\begin{tikzpicture}[scale=0.5]
    \foreach \x/\y/\z/\w in {e/-90/below/$e$, s1/-45/right/$s_1$, s1s2/0/right/$s_1s_2$, s1s2s1/45/right/$s_1s_2s_1$,
    s2s1s2s1/90/above/$s_2s_1s_2s_1$, s2s1s2/135/left/$s_2s_1s_2$, s2s1/180/left/$s_2s_1$, s2/-135/left/$s_2$}{
        \node[shape = circle, fill=none, draw=black, inner sep = 0pt , minimum size=1.2mm, label=\z:\w] (\x) at (\y:2) {};
    }
    \draw[line width= 0.4ex, green!50!black , opacity = 0.3]
        (e)--(s1)
        (s2)--(s2s1)
        (s1s2)--(s1s2s1)
        (s2s1s2)--(s2s1s2s1);

    \draw[line width= 0.4ex, blue, opacity = 0.3]
        (s1)--(s1s2)
        (e)--(s2)
        (s2s1)--(s2s1s2)
        (s1s2s1)--(s2s1s2s1);

    \draw[line width= 0.4ex, red, opacity = 0.3]
        (e)--(s2s1s2)
        (s1)--(s2s1s2s1)
        (s2)--(s1s2)
        (s2s1)--(s1s2s1);

    \draw (s2s1s2s1) node[rectangle, draw, very thick] {};

    \foreach \x in { (s2s1s2s1) }
        {\fill \x circle (0.1cm);}

\end{tikzpicture}
\caption{$\Gamma(\Omega_{s_1s_2s_1s_2} \cap \Hess(s,H))$}
\end{subfigure}
    \caption{$\Gamma(\Omega_{{w}} \cap \Hess(s,H))$ for ${w} = w_S$ in Example~\ref{example_C2}}
    \label{fig_GKM_C2_subgraphs}
\end{figure}

Because of the construction, the set $\{[\Ow{w,H}] \mid w \in W\}$ of cohomology classes \emph{spans} the cohomology $H^*(\Hess(s,H);\C)$ as a $\C$-vector space.
We claim that the $H$-admissible elements in $W$ provide a $W$-module generator as follows.
\begin{theorem} \label{th.generating}
For each $k\geq 0$, the set $\{[\Omega_{w_S,H}] \mid S \in \mathcal W_H, |S|=k\}$ of cohomology classes generates $H^{2k}(\Hess(s,H);\C)$ as a $W$-module.
\end{theorem}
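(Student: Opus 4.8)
The plan is to prove the stronger statement that the $W$-\emph{orbit} of the admissible classes already spans $H^{2k}(\Hess(s,H);\C)$. First I would upgrade the spanning statement preceding the theorem to a basis statement: the Bia\l ynicki--Birula decomposition $\Hess(s,H)=\bigsqcup_{w\in W}(\Omega_w^{\circ}\cap\Hess(s,H))$ is a paving by affine cells, so the classes $[\Omega_{w,H}]$ form a $\C$-basis of $H^*(\Hess(s,H);\C)$, and the cell $\Omega_w^{\circ}\cap\Hess(s,H)$ has codimension $|N(w)\cap M|$ in $\Hess(s,H)$, whence $[\Omega_{w,H}]\in H^{2|N(w)\cap M|}(\Hess(s,H);\C)$. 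Thus $\{[\Omega_{w,H}] : |N(w)\cap M|=k\}$ is a basis of $H^{2k}$, and since $N(w_S)\cap M=S$ the admissible classes sitting in this degree are exactly $\{[\Omega_{w_S,H}] : |S|=k\}$. It therefore suffices to exhibit every basis element $[\Omega_{w,H}]$ as a single $W$-translate of the admissible class attached to the part containing $w$.

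Fix $w$, let $S=N(w)\cap M$ so that $w\in\mathcal W(S,H)=[z_S,w_S]$ by \eqref{e.B interval}, and set $u=ww_S^{-1}$. The first ingredient is the arbitrary-type analogue of \eqref{eq:Owh}: the translation $\psi_u\colon\Hess(u^{-1}su,H)\xrightarrow{\ \sim\ }\Hess(s,H)$, $gB\mapsto ugB$, satisfies $\Omega_{w,H}=\psi_u\bigl(\overline{\Omega_{w_S}^{\circ}\cap\Hess(u^{-1}su,H)}\bigr)$. This is proved exactly as in the cited \cite[Lemma~2.11, Proposition~3.14]{HP}, whose arguments the excerpt already uses to obtain \eqref{e.B interval}. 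Because $\psi_u$ is an isomorphism of varieties, its pullback carries the equivariant class of a subvariety to the class of its preimage; hence $\psi_u^{*}[\Omega_{w,H}]_T=[\Omega_{w_S,H}']_T$, where $\Omega_{w_S,H}'=\overline{\Omega_{w_S}^{\circ}\cap\Hess(u^{-1}su,H)}$ is the Hessenberg Schubert variety of the conjugate variety.

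The second ingredient identifies this pullback with the dot action. The GKM graphs of $\Hess(s,H)$ and $\Hess(u^{-1}su,H)$ coincide, since their vertices ($W$), edges (indexed by $M$) and edge labels depend only on $M$ and the roots, not on the regular semisimple element; let $\iota$ denote the resulting identification of the two equivariant cohomology rings in their GKM presentations. Under $\iota$ the combinatorially defined Hessenberg Schubert class matches, $\iota([\Omega_{w_S,H}']_T)=[\Omega_{w_S,H}]_T$, because each is the Bia\l ynicki--Birula basis element determined by the common graph and the common generic cocharacter. Moreover, tracking the semilinear twist $t_i\mapsto t_{u^{-1}(i)}$ coming from conjugation by $u$, one checks that $\iota\circ\psi_u^{*}$ is precisely the dot action of $u^{-1}$ in the sense of \cite{Tymoczko,AHMMS}; this is the fibrewise realization of the monodromy of the family of regular semisimple Hessenberg varieties. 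Chaining these, $u^{-1}\cdot[\Omega_{w,H}]_T=\iota\psi_u^{*}[\Omega_{w,H}]_T=\iota([\Omega_{w_S,H}']_T)=[\Omega_{w_S,H}]_T$, and applying $u\cdot$ to both sides gives
\[
u\cdot[\Omega_{w_S,H}]=[\Omega_{w,H}],\qquad u=ww_S^{-1}.
\]
Hence every basis element of $H^{2k}$ is a single $W$-translate of an admissible class, and the admissible classes generate $H^{2k}(\Hess(s,H);\C)$ as a $W$-module.

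The main obstacle is the second ingredient: making rigorous that the purely combinatorial dot action coincides with $\iota\circ\psi_u^{*}$, together with the auxiliary matching $\iota([\Omega_{w_S,H}']_T)=[\Omega_{w_S,H}]_T$ of Hessenberg Schubert classes across the two conjugate Hessenberg varieties. Both are convention-sensitive and hinge on the variable twist $t_i\mapsto t_{u^{-1}(i)}$, the first being essentially the statement that Tymoczko's representation is the monodromy representation of the family over the regular semisimple locus. Should the clean single-translate identity resist a direct verification, I would instead argue by triangularity: fixing a linear extension of the Bruhat order on $\{w:|N(w)\cap M|=k\}$, I would show by localization and Tymoczko's combinatorial formula that $u\cdot[\Omega_{w_S,H}]$ equals $[\Omega_{w,H}]$ plus a combination of classes $[\Omega_{v,H}]$ with $v$ strictly larger, and then recover all basis elements from the admissible ones by downward induction.
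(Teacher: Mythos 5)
Your proposal is correct and follows essentially the same route as the paper: relate $\Omega_{w,H}$ to $u\bigl(\overline{\Omega_{w_S}^{\circ}\cap\Hess(u^{-1}su,H)}\bigr)$ with $u=ww_S^{-1}$, use the independence of the GKM data from the regular semisimple element to identify the translated class with $u\cdot[\Omega_{w_S,H}]_T$ via Tymoczko's geometric description of the dot action, and descend to ordinary cohomology. The only organizational difference is that the paper factors both the geometric identity and the action computation through single simple reflections along the weak Bruhat interval $[z_S,w_S]$ (Propositions \ref{p.cells} and \ref{p.action}), which makes the containment $s_i\Omega_{w}^{\circ}\subseteq\Omega_{v}^{\circ}$ and the dimension count immediate, whereas you do it in one step; the ``main obstacle'' you flag is resolved in the paper exactly by the cited results of Tymoczko and \cite[Lemma 8.7]{AHMMS}.
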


Before giving the proof, we remark
that the GKM graph of $\Hess(s,H)$ does not depend on the choice of $s$. Therefore, for different choices of regular semisimple elements $s$ and $s'$, we have the isomorphisms between equivariant and ordinary cohomology rings:
\[
H^*_T(\Hess(s,H)) \cong H^*_T(\Hess(s',H)), \quad
H^*(\Hess(s,H)) \cong H^*(\Hess(s',H))
\]
as graded $H^*(BT)$-modules and graded rings, respectively.
The isomorphisms send the (equivariant) cohomology class of $\overline{\Omega_w^{\circ} \cap \Hess(s,H)}$   to  the (equivariant) cohomology class of $\overline{\Omega_w^{\circ} \cap \Hess(s',H)}$.

\begin{proposition} \label{p.cells}
Let $S \in \mathcal W_H$. Let $v,w \in \mathcal W(S,H)$ be such that $v=s_iw $ for some simple reflection $s_i \in W$ with $\ell(w ) = \ell(v) +1$. Then
\begin{equation} \label{e.equality}\overline{\Omega_{v }^{\circ} \cap \Hess(s,H)} = s_i \overline{(\Omega_{w }^{\circ} \cap s_i^{-1} \Hess( s , H))}.
\end{equation}
\end{proposition}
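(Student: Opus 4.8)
The plan is to first trivialize the right-hand side and then reduce the whole statement to a comparison of two Bia{\l}ynicki--Birula cells of the \emph{same} Hessenberg variety, both attached to the fixed point $vB$. Since $\Hess(s,H)=\{gB\mid \Ad(g^{-1})s\in H\}$, the substitution $g\mapsto \dot s_i^{-1}g$ gives
\[
s_i^{-1}\Hess(s,H)=\Hess(\Ad(\dot s_i^{-1})s,\,H),
\]
which is again a regular semisimple Hessenberg variety, for the rotated element $s'\colonequals \Ad(\dot s_i^{-1})s$. Because left translation by $\dot s_i$ is an automorphism of $G/B$ commuting with closures and intersections, and $\dot s_i\Hess(s',H)=\Hess(s,H)$, the right-hand side of~\eqref{e.equality} equals $\overline{\dot s_i\Omega_w^{\circ}\cap\Hess(s,H)}$. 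Thus it suffices to prove $\overline{\Omega_v^{\circ}\cap\Hess(s,H)}=\overline{\dot s_i\Omega_w^{\circ}\cap\Hess(s,H)}$; both sides are irreducible (a closure of an affine cell, resp.\ the image of one under an automorphism) and contain $vB=\dot s_i\dot wB$. Note also $\dot s_i\Omega_w^{\circ}={}^{s_i}B^-vB/B$, the Bia{\l}ynicki--Birula cell at $vB$ for the opposite Borel ${}^{s_i}B^-=\dot s_iB^-\dot s_i^{-1}$ in place of $B^-$.

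The second step extracts the two facts supplied by $v,w\in\mathcal W(S,H)$. From $v=s_iw$ with $\ell(w)=\ell(v)+1$ one has $N(w)=N(v)\sqcup\{\beta\}$, where $\beta\colonequals v^{-1}(\alpha_i)\in\Phi^+$; imposing $N(v)\cap M=N(w)\cap M\ (=S)$ then forces $\beta\notin M$. This single combinatorial fact has two consequences. First, the cells $\Omega_v^{\circ}\cap\Hess(s,H)$ and $\Omega_w^{\circ}\cap\Hess(s',H)$ both have dimension $|M|-|S|$ (such a cell has dimension $|M|-|N(\cdot)\cap M|$, the arbitrary-type analogue of $\dim\Owo{w,h}=d_h-\ell_h(w)$), so the two irreducible sets above have equal dimension. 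Second, $\beta\notin M$ and $-\beta\notin M$ (as $\beta>0$) mean $v^{-1}w=s_\beta$ is not of the form $s_\alpha$ with $\alpha\in M$; that is, $\{v,w\}$ is not an edge of the GKM graph of $\Hess(s,H)$, so the $T$-stable curve $C$ joining $vB$ and $wB$ in $G/B$ (the left $\SL_2^{(i)}$-orbit closure through $vB$, of weight $\alpha_i$) is \emph{not} contained in $\Hess(s,H)$; i.e.\ $\Hess(s,H)$ is transverse to the $\alpha_i$-direction at $vB$.

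The third step analyzes the cells through root subgroups. Writing $U^-=U_{-\alpha_i}\cdot V$ and ${}^{s_i}U^-=\dot s_iU^-\dot s_i^{-1}=U_{\alpha_i}\cdot V$, where $V\colonequals U^-\cap{}^{s_i}U^-$ is the subgroup with roots $-\Phi^+\setminus\{-\alpha_i\}$, one obtains
\[
\Omega_v^{\circ}=U_{-\alpha_i}\cdot Z,\qquad \dot s_i\Omega_w^{\circ}=U_{\alpha_i}\cdot Z,\qquad Z\colonequals V\dot vB/B,
\]
a common ``core''. Because $v^{-1}\alpha_i=\beta>0$, the factor $U_{\alpha_i}$ stabilizes the base direction while $U_{-\alpha_i}$ genuinely contributes a dimension, so $\dim Z=\dim\dot s_i\Omega_w^{\circ}=\dim\Omega_v^{\circ}-1$; hence $Z$ is dense in $\dot s_i\Omega_w^{\circ}$, whereas $Z$ is a codimension-one subvariety of $\Omega_v^{\circ}$ transverse to the $U_{-\alpha_i}$-factor, i.e.\ to $C$. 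Since $V\subset U^-$ yields $Z\subset\Omega_v^{\circ}$, we get $\overline{Z\cap\Hess(s,H)}\subseteq\overline{\Omega_v^{\circ}\cap\Hess(s,H)}$. The proof then concludes once one knows that $Z\cap\Hess(s,H)$ is dense in $\dot s_i\Omega_w^{\circ}\cap\Hess(s,H)$: this gives $\overline{\dot s_i\Omega_w^{\circ}\cap\Hess(s,H)}=\overline{Z\cap\Hess(s,H)}\subseteq\overline{\Omega_v^{\circ}\cap\Hess(s,H)}$, and equality of two irreducible varieties of the same dimension follows.

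I expect the \textbf{main obstacle} to be precisely this density statement: that the generic point of the cell $\dot s_i\Omega_w^{\circ}\cap\Hess(s,H)$ already lies in the core $Z$ (equivalently, is not trapped in the proper closed locus $\dot s_i\Omega_w^{\circ}\setminus Z$). This is where the transversality of Step~2 must be used quantitatively: the absence of a $\pm\alpha_i$-edge at $vB$ should force the flow defining the cell to stay within $Z$ near the attracting point $vB$, so that collapsing the $U_{-\alpha_i}$-factor does not push generic points of the cell off $\Hess(s,H)$. Concretely, I would either (i) show that the projection $\Omega_v^{\circ}=U_{-\alpha_i}\times Z\to Z$ restricts to a dominant map on $\Omega_v^{\circ}\cap\Hess(s,H)$, using that each $U_{-\alpha_i}$-fiber meets $\Hess(s,H)$ transversally (hence in an isolated point) near $vB$, or (ii) pass to the explicit unipotent coordinates of~\cite{CHL,HP} and rerun, one simple reflection at a time, the computation behind the type~$A$ identity of Proposition~\ref{prop:prop_h-admissible}, of which the present statement is exactly the one-step, arbitrary-type incarnation.
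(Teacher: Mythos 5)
Your overall skeleton (reduce to a single Hessenberg variety, then compare two irreducible sets of equal dimension $|M|-|S|$) matches the paper's, but your argument has a genuine gap that you yourself flag: the density of $Z\cap \Hess(s,H)$ in $\dot s_i\Omega_w^{\circ}\cap\Hess(s,H)$ is never proved, and it is exactly the step that carries the content of the proposition. As written, knowing only that $Z$ is dense in $\dot s_i\Omega_w^{\circ}$ does not let you conclude anything about the intersections with $\Hess(s,H)$ (a dense open subset of a variety need not meet a given closed subvariety densely), so the chain $\overline{\dot s_i\Omega_w^{\circ}\cap\Hess(s,H)}=\overline{Z\cap\Hess(s,H)}\subseteq\overline{\Omega_v^{\circ}\cap\Hess(s,H)}$ is not established. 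The transversality heuristic from your Step 2 (no $\pm\alpha_i$-edge at $vB$) is a true observation but is not turned into a proof.

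The gap is, however, an artifact of the route: the containment you need holds on the nose and makes the core $Z$ and the density question unnecessary. This is what the paper does: since $\ell(v)=\ell(w)-1$, one has $s_i\Omega_w^{\circ}\subseteq\Omega_v^{\circ}$ (e.g.\ from $B^-s_iB^-\cdot B^-wB/B=B^-vB/B$, or directly from your own decomposition: $U_{\alpha_i}$ normalizes $V=U^-\cap{}^{s_i}U^-$ and fixes $vB$ because $v^{-1}\alpha_i=\beta>0$, so in fact $\dot s_i\Omega_w^{\circ}=U_{\alpha_i}VvB/B=VvB/B=Z\subseteq U^-vB/B=\Omega_v^{\circ}$ — your ``core'' is the whole cell). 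Intersecting with $\Hess(s,H)$ gives $s_i\bigl(\Omega_w^{\circ}\cap s_i^{-1}\Hess(s,H)\bigr)\subseteq\Omega_v^{\circ}\cap\Hess(s,H)$ immediately, and since both sides are irreducible (Bia{\l}ynicki--Birula cells of the smooth varieties $\Hess(s_i^{-1}ss_i,H)$ and $\Hess(s,H)$, respectively) of the common dimension $|M\setminus S|$, their closures coincide. So the correct proof is a strict simplification of what you wrote; the missing ingredient is the elementary Bruhat-cell inclusion $s_i\Omega_w^{\circ}\subseteq\Omega_v^{\circ}$, which replaces your entire Step 3.
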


\begin{proof}
Note that for each $w\in \mathcal W(S,H)$, we have
\[
\dim(\Owo{w}\cap\Hess(s,H))=|M\setminus S|.
\]
From $s_i\Omega^{\circ}_w \subseteq \Omega_v^{\circ}$, it follows that $s_i(\Omega_w^{\circ} \cap s_i^{-1}\Hess( s , H)) \subseteq \Omega_v^{\circ} \cap \Hess(s,H)$.
Since the intersections $  \Omega_{w}^{\circ} \cap s_i^{-1}\Hess(s,H)=\Omega_{w}^{\circ} \cap \Hess(s_i^{-1}ss_i,H)$ and $\Omega_{v}^{\circ} \cap \Hess(s,H) $ have the same dimension,     the equality~\eqref{e.equality} follows from  the irreducibility of $\overline{\Omega_{v }^{\circ} \cap \Hess(s,H)}$.
\end{proof}

\begin{proposition} \label{p.action}
Let $S \in \mathcal W_H$ and let $v,w,s_i$ be as in Proposition \ref{p.cells}. Then under the action of $W$ on $H^*_T(\Hess(s,H))$, we have
\begin{equation}
[\Omega_{v,H}]_T=s_i\cdot[\Omega_{w,H}]_T.
\end{equation}
\end{proposition}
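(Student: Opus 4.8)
The plan is to lift the variety-level identity of Proposition~\ref{p.cells} to equivariant cohomology, after first recalling that the $W$-action on $H^*_T(\Hess(s,H))$ is realized geometrically by left translation together with the $s$-independence isomorphism recorded just before the statement. So the first step is to make that geometric description precise. For a simple reflection $s_i$, fix a representative $\dot s_i$ in the normalizer of $T$ and let $L_{s_i}\colon gB\mapsto \dot s_i gB$ denote left translation on $G/B$. Computing $\Ad((\dot s_i g)^{-1})x=\Ad(g^{-1})\Ad(\dot s_i^{-1})x$ shows that $L_{s_i}$ restricts to an isomorphism of varieties
\[
L_{s_i}\colon \Hess(s_i^{-1}ss_i,H)\xrightarrow{\ \cong\ }\Hess(s,H),
\]
which is the same identification already used in the proof of Proposition~\ref{p.cells} via $s_i^{-1}\Hess(s,H)=\Hess(s_i^{-1}ss_i,H)$. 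This map is not $T$-equivariant for the given action; rather $L_{s_i}(t\cdot gB)=(\dot s_i t\dot s_i^{-1})\cdot L_{s_i}(gB)$, so it intertwines the $T$-action with its $s_i$-conjugate, and accordingly the induced pullback $L_{s_i}^*$ is $H^*(BT)$-semilinear, twisting coefficients by $s_i$. Composing $L_{s_i}^*$ with the canonical $s$-independence isomorphism $\Psi\colon H^*_T(\Hess(s_i^{-1}ss_i,H))\xrightarrow{\cong}H^*_T(\Hess(s,H))$ recovers exactly the action of $s_i$ in the $W$-action of \cite[Section~8.3]{AHMMS}; that is, $s_i\cdot(-)=\Psi\circ L_{s_i}^*$.

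Next I would feed Proposition~\ref{p.cells} into this description. By~\eqref{e.equality},
\[
\Omega_{v,H}=\overline{\Omega_v^{\circ}\cap\Hess(s,H)}=L_{s_i}\bigl(\overline{\Omega_w^{\circ}\cap\Hess(s_i^{-1}ss_i,H)}\bigr),
\]
so $\Omega_{v,H}$ is precisely the $L_{s_i}$-image of the Hessenberg Schubert variety indexed by $w$ sitting inside $\Hess(s_i^{-1}ss_i,H)$. Since $L_{s_i}$ is an isomorphism of varieties carrying $T$-stable subvarieties to $T$-stable subvarieties, pulling back the fundamental class of the image returns the fundamental class of the source:
\[
L_{s_i}^*[\Omega_{v,H}]_T=\bigl[\,\overline{\Omega_w^{\circ}\cap\Hess(s_i^{-1}ss_i,H)}\,\bigr]_T.
\]
Applying $\Psi$, and using its defining property that it sends the class of $\overline{\Omega_w^{\circ}\cap\Hess(s_i^{-1}ss_i,H)}$ to $[\Omega_{w,H}]_T$, I obtain $s_i\cdot[\Omega_{v,H}]_T=[\Omega_{w,H}]_T$. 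Because the $W$-action is a genuine group action, $s_i$ acts as an involution, so applying $s_i$ once more yields the desired equality $[\Omega_{v,H}]_T=s_i\cdot[\Omega_{w,H}]_T$. (Alternatively, one avoids the involution step entirely by applying~\eqref{e.equality} with $s$ replaced by the regular semisimple element $s_i^{-1}ss_i$, which gives $L_{s_i}\bigl(\Omega_{w,H}\bigr)=\overline{\Omega_v^{\circ}\cap\Hess(s_i^{-1}ss_i,H)}$ and hence $s_i\cdot[\Omega_{w,H}]_T=[\Omega_{v,H}]_T$ directly.)

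The only genuine obstacle is the first step: identifying the geometric composite $\Psi\circ L_{s_i}^*$ with the combinatorial $W$-action appearing in the statement, and in particular tracking the $s_i$-twist of the $T$-action that makes $L_{s_i}^*$ semilinear over $H^*(BT)$ rather than $H^*(BT)$-linear. This twist is exactly the source of the nontriviality of the action, and it must be matched against the definition in \cite{AHMMS}; I would carry this out by comparing localizations at the $T$-fixed points, where left translation by $\dot s_i$ permutes fixed points by $w\mapsto s_iw$ and simultaneously permutes the weights via $s_i$. Once this identification is in place, everything else is formal: the equality of varieties from Proposition~\ref{p.cells}, functoriality of the fundamental class under an isomorphism, and the $s$-independence compatibility of $\Psi$ with Hessenberg Schubert classes together give the result.
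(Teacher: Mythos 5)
Your proposal is correct and follows essentially the same route as the paper: both realize the $W$-action on $H^*_T(\Hess(s,H))$ geometrically as translation by $\dot s_i$ combined with the $s$-independence isomorphism (the paper via Tymoczko's Borel-construction description $[z,x]\mapsto[zu,u^{-1}x]$ and \cite[Lemma~8.7]{AHMMS}, you via $\Psi\circ L_{s_i}^*$), and then feed in Proposition~\ref{p.cells}. The only wrinkle is that your composite is the inverse of the paper's convention, which you correctly absorb with the involution property of $s_i$ (or avoid entirely with your alternative application of \eqref{e.equality} to $s_i^{-1}ss_i$).
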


\begin{proof}
We will use the geometric interpretation of the $W$-action on $H^*_T(G/B)$ in \cite[Proposition~4.8]{Tymoczko08} as follows. According to the Borel construction of equivariant cohomology, $H^*_T(G/B)$ is defined to be $H^*(ET\times_T (G/B))$, where $ET$ is the classifying bundle of $T$. The Weyl group $W$ acts on $ET\times_T (G/B)$ as follows: For $u \in W$ and $[z,x] \in ET\times_T (G/B)$, $u.[z,x]=[zu, u^{-1}x]$.
This action restricts to an action on  $ET\times_T (G/B)^T$ and the induced action on  $H^*(ET\times_T (G/B)^T)=H^*_T((G/B)^T)$ permutes the fixed points and the coordinates of the torus, in other words, for $u \in W$   and   $(p(w)) \in H^*_T((G/B)^T)=\bigoplus_{w \in W}\mathbb C[t_1, \dots, t_r]$,
\begin{equation} \label{e.action}
(u \cdot p)(w) =u. p(u^{-1}w)  \quad \text{ for } w \in W,
\end{equation}
where $.$ is the action of $u \in W$ on $\mathbb C[t_1, \dots, t_r]=H^*_T(\text{pt})$ and $r$ is the rank of $G$.

Now the action of $u \in W$ on  $ET\times_T (G/B)$ maps  $ET\times_T (u\Hess(s,H))$ onto $ET\times_T (\Hess(s,H))$ and the induced map
\begin{equation} \label{e.induced map}
H^*(ET\times_T (u \Hess(s,H))^T) \rightarrow H^*(ET\times_T  \Hess(s,H) ^T)
\end{equation} sends $(p(w)) $ to $(q(w))$, where $q(w)=u.p(u^{-1}w)$ for $w \in W$.
In particular, when $u=s_i^{-1}$,  by Proposition~\ref{p.cells}, the class $[\overline{\Omega_w^{\circ}\cap s_i^{-1} \Hess(s,H)}]_T \in H^*_T(s_i^{-1}\Hess(s,H))$ is sent to the class $[\overline{\Omega_{v}^{\circ} \cap \Hess(s,H)}]_T \in H^*_T(\Hess(s,H))$.

The composition of the identity
\begin{equation} \label{e.isom}
    H^*(ET\times_T ( \Hess(s,H))^T) \rightarrow H^*(ET\times_T (u \Hess(s,H))^T)
\end{equation}  with \eqref{e.induced map} preserves  $H^*_T(\Hess(s,H))$ (\cite[Lemma 8.7]{AHMMS}). In this way, we get the action of $u$ on $H^*_T(\Hess(s,H))$.

By the remark before Proposition~\ref{p.cells}, the isomorphism $H^*_T(\Hess(s,H)) \cong H^*_T(u\Hess(s,H))$ induced by \eqref{e.isom} maps $[\overline{\Omega_w^{\circ} \cap \Hess(s,H)}]_T$ to $[\overline{\Omega_w^{\circ} \cap u\Hess(s,H)}]_T$, and thus we get the desired equality  $[\Omega_{v,H}]_T=s_i\cdot[\Omega_{w,H}]_T$.
\end{proof}

\begin{proof}[Proof of Theorem \ref{th.generating}]
Recall that $W$ is a disjoint union of $\mathcal W(S,H)$ over $S \in \mathcal W_H$; see~\eqref{eq:partition_W}. It suffices to show that for $S \in \mathcal W_H$ and $v \in \mathcal W(S,H)$, the class $[\Omega_{v,H}]$ is contained in the $W$-submodule generated by the class $[\Omega_{w_S,H}]$.

Let $S \in \mathcal W_H$. By \eqref{e.B interval},    $w_S$ is the maximal element in $\mathcal W(S,H)$ and any other element $v$ in $\mathcal W(S,H)$ is of the form  $v=s_{i_m}\dots s_{i_1}w_{S}$ for a sequence $s_{i_1},\dots, s_{i_m}$ of simple reflections with $\ell(v) =\ell(w_S) -m$.
Applying Proposition \ref{p.action}   repeatedly, we get
\[
s_{i_m}\dots s_{i_1}\cdot[\Omega_{w_S, H}]_T = [\Omega_{v,H}]_T.
\]
Then the desired result follows from the fact that the ideal $(t_1,\dots,t_r)\subset H_T^\ast(\Hess(s,H))$ is invariant under the action of $W$ on $H_T^\ast(\Hess(s,H))$.
\end{proof}

By the same arguments as in the proof of Theorem \ref{thm_regular_implies_smoothness}(1), from Theorem~\ref{thm_main} and Proposition~\ref{p.cells}, we get the following result on the smoothness of $\Omega_{w,H}$ when $G$ is simply-laced.
\begin{theorem}\label{thm_regular_implies_smoothness arbitrary} Assume that $G$ is simply-laced.
Let $w \in W$ and let $H$ be a Hessenberg space. Let $\widetilde{w}$ be the $H$-admissible element of $W$ corresponding to $w$.
 If $\Gamma(\Omega_{\widetilde{w}} \cap \Hess(s,H))$ is regular, then $\Omega_{w,H}$ is smooth.
\end{theorem}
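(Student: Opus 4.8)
The plan is to mirror the proof of Theorem~\ref{thm_regular_implies_smoothness}(1), replacing the single cited identity~\eqref{eq:Owh} of Proposition~\ref{prop:prop_h-admissible} by an identity built from iterating Proposition~\ref{p.cells}. Write $\widetilde{w} = w_S$, where $S = N(w) \cap M$ is the unique Weyl-type subset with $w \in \mathcal W(S,H)$. By~\eqref{e.B interval}, $\mathcal W(S,H) = [z_S, w_S]$ is a weak Bruhat interval with maximal element $w_S$, so, exactly as in the proof of Theorem~\ref{th.generating}, one may write $w = s_{i_m}\cdots s_{i_1} w_S$ with $\ell(s_{i_t}\cdots s_{i_1}w_S) = \ell(w_S) - t$ and with every partial product $u_t := s_{i_t}\cdots s_{i_1} w_S$ lying in $\mathcal W(S,H)$. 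Setting $u := w w_S^{-1} = s_{i_m}\cdots s_{i_1}$, the goal is to establish the arbitrary-type analog of~\eqref{eq:Owh},
\[
\Omega_{w,H} = u\bigl(\overline{\Omega_{w_S}^{\circ} \cap \Hess(u^{-1}su,H)}\bigr),
\]
where $u^{-1}su$ is again regular semisimple (being a conjugate of $s$) and $\Hess(u^{-1}su,H)$ is $T$-stable since $u^{-1}Tu = T$.

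To obtain this identity I would apply Proposition~\ref{p.cells} once for each step of the chain. At the $t$-th step, applying the proposition with $v = u_t$, $w = u_{t-1}$, and $s_i = s_{i_t}$ (noting $u_t = s_{i_t} u_{t-1}$ and $\ell(u_{t-1}) = \ell(u_t)+1$) gives
\[
\overline{\Omega_{u_t}^{\circ} \cap \Hess(s', H)} = s_{i_t}\bigl(\overline{\Omega_{u_{t-1}}^{\circ} \cap \Hess(s_{i_t}^{-1}s's_{i_t}, H)}\bigr)
\]
for every regular semisimple $s'$. Starting from $s' = s$ at the outermost step, feeding the conjugated element $s_{i_t}^{-1}s's_{i_t}$ into the next step and telescoping both the accumulated simple reflections and their conjugations yields the displayed identity, with total left factor $u = s_{i_m}\cdots s_{i_1}$ and total conjugate $u^{-1}su$. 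The point that makes each application legitimate is that all intermediate elements $u_t$ lie in the single cell $\mathcal W(S,H)$, which is precisely where Proposition~\ref{p.cells} applies; this is guaranteed by the weak-interval structure~\eqref{e.B interval}.

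With the identity in hand the conclusion follows as in type $A$. Since the GKM graph of $\Hess(s,H)$ is independent of the choice of regular semisimple element (the remark preceding Proposition~\ref{p.cells}), the regularity of $\Gamma(\Omega_{\widetilde w} \cap \Hess(s,H)) = \Gamma(\Omega_{w_S}\cap\Hess(s,H))$ forces the regularity of $\Gamma(\Omega_{w_S}\cap\Hess(u^{-1}su,H))$. Applying Theorem~\ref{thm_main} to the flag variety $X = G/B$ (simply-laced, as assumed) and the smooth $T$-stable subvariety $Y = \Hess(u^{-1}su,H)$, regularity yields that $\overline{\Omega_{w_S}^{\circ}\cap\Hess(u^{-1}su,H)}$ is smooth. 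Finally, left multiplication by a representative $\dot u \in N_G(T)$ is an automorphism of $G/B$ carrying $\Hess(u^{-1}su,H)$ onto $\Hess(s,H)$, hence it preserves smoothness, and the identity above shows $\Omega_{w,H}$ is smooth.

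I expect the main obstacle to be the bookkeeping in the iteration of Proposition~\ref{p.cells}: one must verify that the descending chain $w_S = u_0, u_1, \dots, u_m = w$ can be chosen inside $\mathcal W(S,H)$ with lengths dropping by exactly one at each step, and must track the successive conjugates of $s$ so that they collapse into the single conjugate $u^{-1}su$ while the reflections accumulate into $u$. Both are controlled by the weak Bruhat interval description~\eqref{e.B interval}, exactly as exploited in the proof of Theorem~\ref{th.generating}, so no genuinely new difficulty arises beyond careful indexing.
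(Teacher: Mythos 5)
Your proposal is correct and follows essentially the same route as the paper, which proves this theorem by exactly the argument you describe: iterating Proposition~\ref{p.cells} along the weak Bruhat interval $\mathcal W(S,H)=[z_S,w_S]$ to obtain the arbitrary-type analog of~\eqref{eq:Owh}, then invoking the $s$-independence of the GKM graph and Theorem~\ref{thm_main}. The telescoping of the reflections into $u=ww_S^{-1}$ and of the conjugations into $u^{-1}su$ checks out, so no gap remains.
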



\section{Further questions}\label{sec:questions}
We have been considering the smoothness of the Hessenberg Schubert variety $\Owh{w}$ by studying the intersection $\Ow{w} \cap \Hess(s,h)$, as described in Theorem~\ref{thm_main_Hess}. In particular, we have provided a sufficient condition, formulated in terms of the graph $\Gwh{w}$, under which the intersection $\Omega_w \cap Y$ is irreducible, and moreover, $\Owh{w}$ is smooth. However, it is not a necessary condition (cf. Remark~\ref{rmk_irreducible}).
In this section, we present additional questions regarding the structure of $\Ow{w} \cap \Hess(s,h)$ that may offer deeper insights into the geometry of the Hessenberg Schubert variety $\Owh{w}$.

Regarding the irreducibility of the intersection $\Ow{w} \cap \Hess(s,h)$, we present the following question:
\begin{question}\label{question_1}
Let $h \colon [n] \to [n]$ be a Hessenberg function and $w \in \S_n$.
\begin{enumerate}
\item \label{question_1.1} Find a necessary and sufficient condition on $w$ such that $\Omega_w \cap \Hess(s,h)$ is irreducible.
\item  \label{question_1.2} For $v \geq w$, find a necessary and sufficient condition on $v$ such that $\Omega_w \cap \Hess(s,h)$ is irreducible at $v$.
\item \label{question_1.3} Find an explicit description of the irreducible components of $\Omega_w \cap \Hess(s,h)$, that is, find $C_w \subset \S_n$ such that
\[
\Omega_w \cap \Hess(s,h) = \bigcup_{v \in C_w} \overline{\Owo{v}  \cap \Hess(s,h)},
\]
where each $\overline{\Omega_v^{\circ} \cap Y}$  for $v \in C_w$ forms an irreducible component in $\Omega_w \cap Y$.
\end{enumerate}
\end{question}

Considering \eqref{question_1.1} and \eqref{question_1.2} of Question~\ref{question_1}, the existence of $u\in [w,w_0]$ distinct from $w$ with $\ell_h(u)\leq \ell_h(w)$ implies that $\Ow{w}\cap\Hess(s,h)$ is reducible. One may ask whether the condition $\ell_h(u)>\ell_h(w)$ for every $u >_h  w$ implies the irreducibility of $\Ow{w}\cap\Hess(s,h)$. The following example shows that this is not true.
\begin{example}
    Let $h=(3,4,5,6,6,6)$ and $w=236451$. Then $w$ is $h$-admissible. Using SageMATH, we can check that $\ell_h(u)>\ell_h(w)$ for every $u$ with $w<u\leq w_0$. However, $\Ow{w}\cap\Hess(s,h)$ is reducible at $v= 632451$.
\end{example}

Related to Question~\ref{question_1}\eqref{question_1.3}, one
may wonder whether the irreducible components of the intersection $\Omega_w \cap \Hess(s,h)$ are equidimensional, and the following example shows that the irreducible components might not be equidimensional.
\begin{example}\label{example_reducible_non_equidimentional}
Let $h = (3,3,4,4)$ and $w = 3214$.
Then $w$ is not $h$-admissible and $\widetilde{w}=4312$ is the corresponding $h$-admissible permutation. Using Proposition~\ref{prop:prop_h-admissible}, one can check that $\Ow{w,h}^T=\{3214,3241\}$ and $\Ow{w,h}$ is a curve.
On the other hand, considering the graph $\Gamma(\Omega_w \cap \Hess(s,h))$ as shown in Figure~\ref{figure_3214_3344}, one can find other irreducible components:
\begin{equation}\label{eq_h3344_w3214}
\Omega_{3214} \cap \Hess(s, h)
= \Omega_{3214, h}  \cup \Omega_{3412, h} \cup \Omega_{3241,h} \cup \Omega_{4213, h}.
\end{equation}
For instance, $\Omega_{3412, h}$ is one of the irreducible components because there does not exist $u \in (\Omega_{w} \cap \Hess(s,h))^T$ such that $3412 \in \Omega_{u,h}^T$ except the case when $u = 3412$.
By similar arguments, we can see that $\Owh{3241}$ and $\Owh{4213}$ are also irreducible components. We notice that the irreducible component $\Omega_{3412,h}$ is of dimension $2$ since $\ell_h(3412) = 2$. Accordingly, the intersection $\Omega_{3214} \cap \Hess(s,h)$ is not equidimensional.

To prove that there is no more irreducible component than~\eqref{eq_h3344_w3214}, we consider the local chart $\mathscr{O}_{4321}$ consisting of matrices of the following form:
\[
\begin{pmatrix}
x_{41} & x_{42} & x_{43} & 1 \\
x_{31} & x_{32} & 1 & 0 \\
x_{21} & 1 & 0 & 0 \\
1 & 0 & 0 & 0
\end{pmatrix}.
\]
Here, we set $\mathscr{O}_v := v U^-$.
Since any irreducible component of the intersection $\Ow{w} \cap \Hess(s,h)$ is of the form $\Owh{u}$ for some permutation $u$, remaining possible irreducible components are $\Owh{4312}$, $\Owh{4231}$, $\Owh{3421}$, and $\Owh{4321}$ in this case. All of them contain $4321$ as an element, so it is enough to compute the number of irreducible components meeting at $4321$.
As considered in~\cite[\textsection 5]{DeMari_Shayman_88}, the defining ideal for the intersection $\mathscr{O}_{4321} \cap \Hess(s,h)$ is given by
\[
\begin{split}
&\langle
x_{41}, x_{31}, x_{32}x_{41} - x_{31}x_{42}, - x_{42}x_{21} + x_{41}, -x_{42}, \\
&\qquad - x_{43}x_{32}x_{21} + x_{42}x_{21} + 2 x_{43}x_{31} - 3 x_{41}, x_{43}x_{32} - 2 x_{42}\rangle.
\end{split}
\]
Here, we take $s = \text{diag}(1,2,3,4)$ in the computation.
Notice that the above ideal is not prime, and its primary decomposition is given by
\[
\langle x_{42}, x_{41}, x_{32}, x_{31} \rangle \cap
\langle x_{43}, x_{42}, x_{41}, x_{31} \rangle.
\]
This implies that exactly two irreducible components meet at $4321$. Indeed, the ideals in the above decomposition correspond to $\Owh{3412}$ and $\Owh{3241}$, respectively. This proves~\eqref{eq_h3344_w3214} holds, that is, $C_w = \{3214, 3412, 3241, 4213\}$. Here, we denote by $C_w$ the set of permutations encoding the irreducible components (see~Question~\ref{question_1}\eqref{question_1.3}).
\begin{figure}
\centering
\begin{subfigure}[b]{0.45\textwidth}
\centering
\begin{tikzpicture} 
\tikzstyle{every node}=[font=\scriptsize, label distance=1mm]
    \matrix [matrix of math nodes,column sep={0.55cm,between origins},
	row sep={1cm,between origins},
		nodes={circle, draw, inner sep = 0pt , minimum size=1.2mm}]
    {
			& & & & & \node[label = {above:4321}] (4321) {} ; & & & & & \\
			& & & \node[label = {above left:4312}] (4312) {} ; & &
			\node[label = {[label distance = 0.1cm]160:4231}] (4231) {} ; & &
			\node[label = {above right:3421}] (3421) {} ; & & & \\
			& \node[label = {above left:4132}] (4132) {} ; & &
			\node[label = {left:4213}] (4213) {} ; & &
			\node[label = {above:3412}] (3412) {} ; & &
			\node[label = {[label distance = 0.1cm]0:2431}] (2431) {} ; & &
			\node[label = {above right:3241}] (3241) {} ; & \\
			\node[label = {left:1432}] (1432) {} ; & &
			\node[label = {left:4123}] (4123) {} ; & &
			\node[label = {[label distance = 0.01cm]180:2413}] (2413) {} ; & &
			\node[label = {[label distance = 0.01cm]0:3142}] (3142) {} ; & &
			\node[label = {right:2341}] (2341) {} ; & &
			\node[label = {right:3214}] (3214) {} ; \\
			& \node[label = {below left:1423}] (1423) {} ; & &
			\node[label = {[label distance = 0.1cm]182:1342}] (1342) {} ; & &
			\node[label = {below:2143}] (2143) {} ; & &
			\node[label = {-30:3124}] (3124) {} ; & &
			\node[label = {below right:2314}] (2314) {} ; & \\
			& & & \node[label = {below left:1243}] (1243) {} ; & &
			\node[label = {180:1324}] (1324) {} ; & &
			\node[label = {below right:2134}] (2134) {} ; & & & \\
			& & & & & \node[label = {below:1234}] (1234) {} ; & & & & & \\
    };

    \begin{scope}[on background layer]
    \draw[line width= 0.6ex, red,  opacity = 0.3 ]
        (1234)--(2134)
        (1243)--(2143)
        (4123)--(4213)
        (3124)--(3214)
        (3412)--(3421)
        (4312)--(4321)

        (1324)--(2314)
        (2413)--(1423) ;


    \draw[line width= 0.6ex, orange, opacity = 0.3]
        (2134)--(2314)
        (2413)--(2431)
        (4213)--(4231)
        (4132)--(4312)
        (1342)--(3142)
        (1324)--(3124)

        (1234)--(3214)
        (3412)--(1432) ;


    \draw[line width= 0.6ex,  green!50!black, opacity = 0.3]
        (1432)--(4132)
        (1423)--(4123)
        (2143)--(2413)
        (2314)--(2341)
        (3214)--(3241)
        (3142)--(3412)
        (1342) to [bend right, looseness=0.3] (4312)
        (1243)to [bend left, looseness=0.3](4213) ;


    \draw[line width= 0.6ex,  blue, opacity = 0.3]
        (1234)--(1324)
        (1423)--(1432)
        (4123)--(4132)
        (4231)--(4321)
        (2341)--(3241)
        (2314)--(3214)

        (2134)--(3124)
        (2431)--(3421)	;


%
    \draw[line width=0.6ex, purple!50!black, opacity = 0.3]
        (1243)--(1423)
        (2413)--(4213)
        (2431)--(4231)
        (3241)--(3421)
        (3142)--(3124)
        (1324)--(1342)	

        (2341)to [bend right, looseness=0.2] (4321)
        (4123)--(2143)	;


    \draw[line width=0.6ex, black, opacity = 0.3]
        (1234)--(1243)
        (2134)--(2143)
        (2341)--(2431)
        (3421)--(4321)
        (3412)--(4312)
        (1342)--(1432)

        (3241)--(4231)
        (4132)--(3142) ;

		
    \draw (3214) node[rectangle, draw, very thick] {};

    \end{scope}
    \foreach \x in {
    (3214), (3241), (3412), (3421), (4213), (4231), (4312), (4321)}{
        \fill \x circle (0.1cm);}

  \foreach \x/\y in {
    (3214)/(3241), (4213)/(4231), (3412)/(4312),
    (3412)/(3421),
    (3241)/(4231), (3241)/(3421), (3421)/(4321),
    (4312)/(4321), (4231)/(4321)
    }{
    \draw[ultra thick] \x -- \y;
  }
		\end{tikzpicture}
\caption{$\Gamma(\Omega_{3214} \cap \Hess(s, (3,3,4,4)))$}
\end{subfigure}
\quad
\begin{subfigure}[b]{0.45\textwidth}
\centering
\begin{tikzpicture} 
\tikzstyle{every node}=[font=\scriptsize, label distance=1mm]
    \matrix [matrix of math nodes,column sep={0.55cm,between origins},
	row sep={1cm,between origins},
		nodes={circle, draw, inner sep = 0pt , minimum size=1.2mm}]
    {
			& & & & & \node[label = {above:4321}] (4321) {} ; & & & & & \\
			& & & \node[label = {above left:4312}] (4312) {} ; & &
			\node[label = {[label distance = 0.1cm]160:4231}] (4231) {} ; & &
			\node[label = {above right:3421}] (3421) {} ; & & & \\
			& \node[label = {above left:4132}] (4132) {} ; & &
			\node[label = {left:4213}] (4213) {} ; & &
			\node[label = {above:3412}] (3412) {} ; & &
			\node[label = {[label distance = 0.1cm]0:2431}] (2431) {} ; & &
			\node[label = {above right:3241}] (3241) {} ; & \\
			\node[label = {left:1432}] (1432) {} ; & &
			\node[label = {left:4123}] (4123) {} ; & &
			\node[label = {[label distance = 0.01cm]180:2413}] (2413) {} ; & &
			\node[label = {[label distance = 0.01cm]0:3142}] (3142) {} ; & &
			\node[label = {right:2341}] (2341) {} ; & &
			\node[label = {right:3214}] (3214) {} ; \\
			& \node[label = {below left:1423}] (1423) {} ; & &
			\node[label = {[label distance = 0.1cm]182:1342}] (1342) {} ; & &
			\node[label = {below:2143}] (2143) {} ; & &
			\node[label = {-30:3124}] (3124) {} ; & &
			\node[label = {below right:2314}] (2314) {} ; & \\
			& & & \node[label = {below left:1243}] (1243) {} ; & &
			\node[label = {180:1324}] (1324) {} ; & &
			\node[label = {below right:2134}] (2134) {} ; & & & \\
			& & & & & \node[label = {below:1234}] (1234) {} ; & & & & & \\
    };

    \begin{scope}[on background layer]
    \draw[line width= 0.6ex, red,  opacity = 0.3 ]
        (1234)--(2134)
        (1243)--(2143)
        (4123)--(4213)
        (3124)--(3214)
        (3412)--(3421)
        (4312)--(4321)

        (1324)--(2314)
        (2413)--(1423) ;


    \draw[line width= 0.6ex, orange, opacity = 0.3]
        (2134)--(2314)
        (2413)--(2431)
        (4213)--(4231)
        (4132)--(4312)
        (1342)--(3142)
        (1324)--(3124)

        (1234)--(3214)
        (3412)--(1432) ;


    \draw[line width= 0.6ex,  green!50!black, opacity = 0.3]
        (1432)--(4132)
        (1423)--(4123)
        (2143)--(2413)
        (2314)--(2341)
        (3214)--(3241)
        (3142)--(3412)
        (1342) to [bend right, looseness=0.3] (4312)
        (1243)to [bend left, looseness=0.3](4213) ;


    \draw[line width= 0.6ex,  blue, opacity = 0.3]
        (1234)--(1324)
        (1423)--(1432)
        (4123)--(4132)
        (4231)--(4321)
        (2341)--(3241)
        (2314)--(3214)

        (2134)--(3124)
        (2431)--(3421)	;


%
    \draw[line width=0.6ex, purple!50!black, opacity = 0.3]
        (1243)--(1423)
        (2413)--(4213)
        (2431)--(4231)
        (3241)--(3421)
        (3142)--(3124)
        (1324)--(1342)	

        (2341)to [bend right, looseness=0.2] (4321)
        (4123)--(2143)	;


    \draw[line width=0.6ex, black, opacity = 0.3]
        (1234)--(1243)
        (2134)--(2143)
        (2341)--(2431)
        (3421)--(4321)
        (3412)--(4312)
        (1342)--(1432)

        (3241)--(4231)
        (4132)--(3142) ;

		
    \draw (3214) node[rectangle, draw, very thick] {};

    \end{scope}
    \foreach \x in {
    (3214), (3241) }{
        \fill \x circle (0.1cm);}

  \foreach \x/\y in {
    (3214)/(3241)
    }{
    \draw[ultra thick] \x -- \y;
  }
\end{tikzpicture}
\caption{$\Gamma(\Omega_{3214, (3,3,4,4)})$}
\end{subfigure}
\caption{The graphs $\Gamma(\Omega_w \cap \Hess(s,h))$ and $\Gamma(\Owh{w})$ for $h = (3,3,4,4)$ and $w = 3214$}\label{figure_3214_3344}
\end{figure}
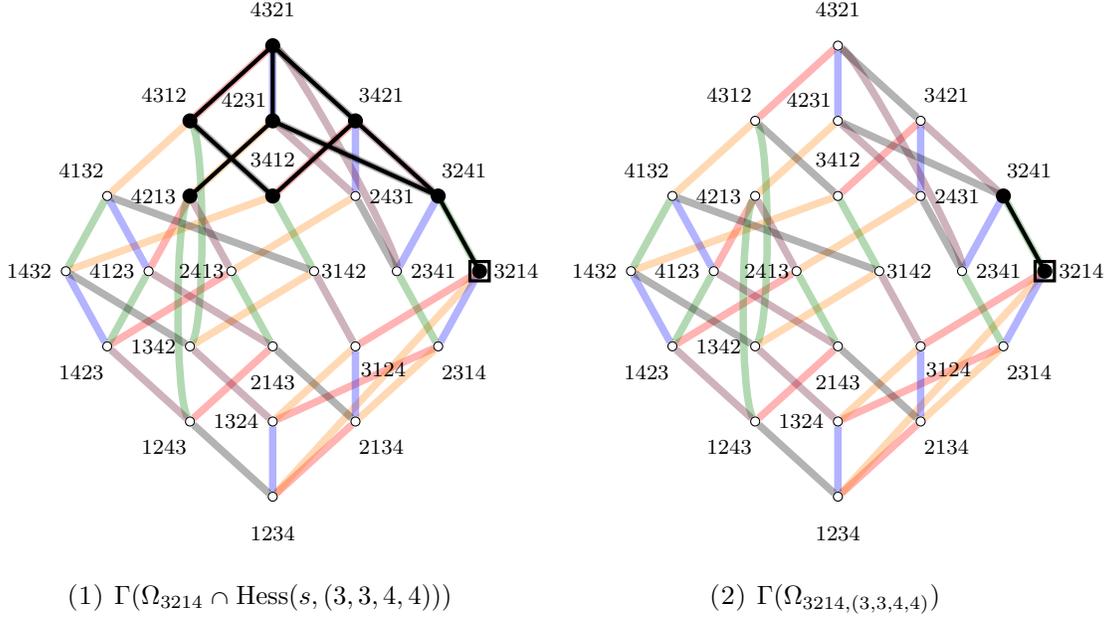
\end{example}

Regarding the smoothness of the Hessenberg Schubert variety $\Owh{w}$, we raise the following question:
\begin{question}\label{question_Owh_smooth}
Find a necessary and sufficient condition on $w$ such that $\Omega_{w,h}$ is smooth.
\end{question}

Theorem~\ref{thm_main_Hess} provides one sufficient condition on $w$ such that $\Omega_{w,h}$ is smooth. The following example shows that there is an $h$-admissible permutation $w$ such that $\Ow{w}\cap\Hess(s,h)$ is reducible, but $\Ow{w,h}$ is smooth.

\begin{example}\label{example_reducible}
Since the graph $\Gwh{w}$ is the GKM graph of the intersection $\Ow{w} \cap \Hess(s,h)$, it could be non-regular even when $\Owh{w}$ is smooth. We consider such a case in this example. Let $h = (3,3,4,4)$ and $w = 2134$. For convenience, we set $s
= \text{diag}(1,2,3,4)$ in this computation. Consider the local chart $\mathscr{O}_{2341}$ consisting of matrices of the following form:
\[
\begin{pmatrix}
x_{41} & x_{42} & x_{43} & 1 \\
1 & 0 & 0 & 0 \\
x_{21} & 1 & 0 & 0 \\
x_{31} & x_{32} & 1 & 0
\end{pmatrix}.
\]
Here, we set $\mathscr{O}_v \colonequals v U^-$.
Then, as provided in~\cite[\textsection 5]{DeMari_Shayman_88}, the defining ideal for the intersection $\mathscr{O}_{2341} \cap \Hess(s,h)$ is given by
\[
\langle x_{43}x_{32}x_{21} - x_{42}x_{21} - 2x_{43}x_{31} - x_{41}, -x_{43}x_{32} - 2x_{42} \rangle.
\]
We denote by $\mathcal{I}_{h,w,v}$ the defining ideal for the intersection $\Ow{w}\cap\Hess(s,h) $ in the local chart~$\mathscr{O}_v$.
Moreover, the defining ideal $\mathcal{I}_{h,2134,2341}$ for the intersection $\Ow{2134}\cap\Hess(s,h)$ is given by
\[
\mathcal{I}_{h,2134,2341}=
\langle
x_{41},
x_{43}x_{32}x_{21} - x_{42}x_{21} - 2x_{43}x_{31} - x_{41}, -x_{43}x_{32} - 2x_{42} \rangle.
\]
We notice that the ideal $\mathcal{I}_{h,2134,2341}$ is not a prime ideal, and its primary decomposition is given by
\[
\mathcal{I}_{h,2134,2341} = \langle x_{41}, x_{42}, x_{43} \rangle \cap
\langle x_{41}, x_{43}x_{32} + 2 x_{42}, 3_{42}x_{21} + 2x_{43}x_{31}, 3x_{32}x_{21} - 4 x_{31} \rangle.
\]
To find the defining ideal of $\Owh{2134}$ among two ideals,
consider the intersection $\Ow{2341}\cap\Hess(s,h) $ in the local chart $\mathscr{O}_{2341}$. Then its defining ideal $\mathcal{I}_{h, 2341, 2341}$ is given by
\[
\mathcal{I}_{h,2341,2341} = \langle x_{41}, x_{42}, x_{43} \rangle.
\]
We notice that this ideal is prime, so $\Ow{2341}\cap\Hess(s,h)  = \Owh{2341}$. Moreover, this proves that the defining ideal of $\Owh{2134}$ in the local chart $\mathscr{O}_{2341}$ is $\langle x_{41}, x_{43}x_{32} + 2 x_{42}, 3 x_{42}x_{21} + 2x_{43}x_{31}, 3x_{32}x_{21} - 4 x_{31} \rangle$.

On the other hand, the intersection of $\Owh{2134}$ and $\Owh{2341}$ in $\mathscr{O}_{2341}$ is defined by
\[
\langle x_{41}, x_{42}, x_{43}, 3x_{32}x_{21} - 4x_{31} \rangle,
\]
which defines the permutohedral variety of dimension $2$. Accordingly, the GKM graph of $\Owh{2134}$ is obtained from $\Gamma(\Ow{2134}\cap\Hess(s,h))$ by deleting the edges connecting the following pairs of vertices: $\{2341, 4321\}, \{3241, 4231\}, \{2431, 3421\}$. See Figure~\ref{figure_Graphs_2134}.

\begin{figure}
\centering
\begin{subfigure}[b]{0.45\textwidth}
    		\begin{tikzpicture} 
		\tikzstyle{every node}=[font=\scriptsize, label distance=1mm]
    \matrix [matrix of math nodes,column sep={0.55cm,between origins},
	row sep={1cm,between origins},
		nodes={circle, draw, inner sep = 0pt , minimum size=1.2mm}]
    {
			& & & & & \node[label = {above:4321}] (4321) {} ; & & & & & \\
			& & & \node[label = {above left:4312}] (4312) {} ; & &
			\node[label = {[label distance = 0.1cm]160:4231}] (4231) {} ; & &
			\node[label = {above right:3421}] (3421) {} ; & & & \\
			& \node[label = {above left:4132}] (4132) {} ; & &
			\node[label = {left:4213}] (4213) {} ; & &
			\node[label = {above:3412}] (3412) {} ; & &
			\node[label = {[label distance = 0.1cm]0:2431}] (2431) {} ; & &
			\node[label = {above right:3241}] (3241) {} ; & \\
			\node[label = {left:1432}] (1432) {} ; & &
			\node[label = {left:4123}] (4123) {} ; & &
			\node[label = {[label distance = 0.01cm]180:2413}] (2413) {} ; & &
			\node[label = {[label distance = 0.01cm]0:3142}] (3142) {} ; & &
			\node[label = {right:2341}] (2341) {} ; & &
			\node[label = {right:3214}] (3214) {} ; \\
			& \node[label = {below left:1423}] (1423) {} ; & &
			\node[label = {[label distance = 0.1cm]182:1342}] (1342) {} ; & &
			\node[label = {below:2143}] (2143) {} ; & &
			\node[label = {-30:3124}] (3124) {} ; & &
			\node[label = {below right:2314}] (2314) {} ; & \\
			& & & \node[label = {below left:1243}] (1243) {} ; & &
			\node[label = {180:1324}] (1324) {} ; & &
			\node[label = {below right:2134}] (2134) {} ; & & & \\
			& & & & & \node[label = {below:1234}] (1234) {} ; & & & & & \\
    };

    \begin{scope}[on background layer]
    \draw[line width= 0.6ex, red,  opacity = 0.3 ]
        (1234)--(2134)
        (1243)--(2143)
        (4123)--(4213)
        (3124)--(3214)
        (3412)--(3421)
        (4312)--(4321)

        (1324)--(2314)
        (2413)--(1423) ;


    \draw[line width= 0.6ex, orange, opacity = 0.3]
        (2134)--(2314)
        (2413)--(2431)
        (4213)--(4231)
        (4132)--(4312)
        (1342)--(3142)
        (1324)--(3124)

        (1234)--(3214)
        (3412)--(1432) ;


    \draw[line width= 0.6ex,  green!50!black, opacity = 0.3]
        (1432)--(4132)
        (1423)--(4123)
        (2143)--(2413)
        (2314)--(2341)
        (3214)--(3241)
        (3142)--(3412)
        (1342) to [bend right, looseness=0.3] (4312)
        (1243)to [bend left, looseness=0.3](4213) ;


    \draw[line width= 0.6ex,  blue, opacity = 0.3]
        (1234)--(1324)
        (1423)--(1432)
        (4123)--(4132)
        (4231)--(4321)
        (2341)--(3241)
        (2314)--(3214)

        (2134)--(3124)
        (2431)--(3421)	;


%
    \draw[line width=0.6ex, purple!50!black, opacity = 0.3]
        (1243)--(1423)
        (2413)--(4213)
        (2431)--(4231)
        (3241)--(3421)
        (3142)--(3124)
        (1324)--(1342)	

        (2341)to [bend right, looseness=0.2] (4321)
        (4123)--(2143)	;


    \draw[line width=0.6ex, black, opacity = 0.3]
        (1234)--(1243)
        (2134)--(2143)
        (2341)--(2431)
        (3421)--(4321)
        (3412)--(4312)
        (1342)--(1432)

        (3241)--(4231)
        (4132)--(3142) ;

		
    \draw (2134) node[rectangle, draw, very thick] {};

    \end{scope}
    \foreach \x in {(2134), (2143), (2314),	(2341),	(2413), (2431), (3124),
        (3142),	(3214),	(3241),	(3412),	(3421),	(4123),	(4132),	(4213),
        (4231),	(4312),	(4321)} {\fill \x circle (0.1cm);}

  \foreach \x/\y in {(2134)/(2314), (2134)/(3124), (2134)/(2143),
    (2143)/(4123), (2143)/(2413), (3124)/(3142), (3124)/(3214),
    (2314)/(2341), (2314)/(3214),
    (4123)/(4132), (4123)/(4213), (2413)/(4213), (2413)/(2431),
    (3142)/(4132), (3142)/(3412), (2341)/(2431), (2341)/(3241),
    (3214)/(3241), (4132)/(4312), (4213)/(4231), (3412)/(4312),
    (3412)/(3421),  (2431)/(3421), (2431)/(4231),
    (3241)/(4231), (3241)/(3421), (3421)/(4321),
    (4312)/(4321), (4231)/(4321)}{
    \draw[ultra thick] \x -- \y;
  }
    \draw[ultra thick]
        (2341)to [bend right, looseness=0.2] (4321);

		\end{tikzpicture}
\caption{$\Gamma(\Omega_{2134} \cap \Hess(s, (3,3,4,4)))$}
\end{subfigure}
\quad
\begin{subfigure}[b]{0.45\textwidth}
    		\begin{tikzpicture}
		\tikzstyle{every node}=[font=\scriptsize, label distance=1mm]
    \matrix [matrix of math nodes,column sep={0.55cm,between origins},
	row sep={1cm,between origins},
		nodes={circle, draw, inner sep = 0pt , minimum size=1.2mm}]
    {
			& & & & & \node[label = {above:4321}] (4321) {} ; & & & & & \\
			& & & \node[label = {above left:4312}] (4312) {} ; & &
			\node[label = {[label distance = 0.1cm]160:4231}] (4231) {} ; & &
			\node[label = {above right:3421}] (3421) {} ; & & & \\
			& \node[label = {above left:4132}] (4132) {} ; & &
			\node[label = {left:4213}] (4213) {} ; & &
			\node[label = {above:3412}] (3412) {} ; & &
			\node[label = {[label distance = 0.1cm]0:2431}] (2431) {} ; & &
			\node[label = {above right:3241}] (3241) {} ; & \\
			\node[label = {left:1432}] (1432) {} ; & &
			\node[label = {left:4123}] (4123) {} ; & &
			\node[label = {[label distance = 0.01cm]180:2413}] (2413) {} ; & &
			\node[label = {[label distance = 0.01cm]0:3142}] (3142) {} ; & &
			\node[label = {right:2341}] (2341) {} ; & &
			\node[label = {right:3214}] (3214) {} ; \\
			& \node[label = {below left:1423}] (1423) {} ; & &
			\node[label = {[label distance = 0.1cm]182:1342}] (1342) {} ; & &
			\node[label = {below:2143}] (2143) {} ; & &
			\node[label = {-30:3124}] (3124) {} ; & &
			\node[label = {below right:2314}] (2314) {} ; & \\
			& & & \node[label = {below left:1243}] (1243) {} ; & &
			\node[label = {180:1324}] (1324) {} ; & &
			\node[label = {below right:2134}] (2134) {} ; & & & \\
			& & & & & \node[label = {below:1234}] (1234) {} ; & & & & & \\
    };

    \begin{scope}[on background layer ]
    \draw[line width= 0.6ex, red,  opacity = 0.3 ]
        (1234)--(2134)
        (1243)--(2143)
        (4123)--(4213)
        (3124)--(3214)
        (3412)--(3421)
        (4312)--(4321)

        (1324)--(2314)
        (2413)--(1423) ;


    \draw[line width= 0.6ex, orange, opacity = 0.3]
        (2134)--(2314)
        (2413)--(2431)
        (4213)--(4231)
        (4132)--(4312)
        (1342)--(3142)
        (1324)--(3124)

        (1234)--(3214)
        (3412)--(1432) ;


    \draw[line width= 0.6ex,  green!50!black, opacity = 0.3]
        (1432)--(4132)
        (1423)--(4123)
        (2143)--(2413)
        (2314)--(2341)
        (3214)--(3241)
        (3142)--(3412)
        (1342) to [bend right, looseness=0.3] (4312)
        (1243)to [bend left, looseness=0.3](4213) ;


    \draw[line width= 0.6ex,  blue, opacity = 0.3]
        (1234)--(1324)
        (1423)--(1432)
        (4123)--(4132)
        (4231)--(4321)
        (2341)--(3241)
        (2314)--(3214)

        (2134)--(3124)
        (2431)--(3421)	;


%
    \draw[line width=0.6ex, purple!50!black, opacity = 0.3]
        (1243)--(1423)
        (2413)--(4213)
        (2431)--(4231)
        (3241)--(3421)
        (3142)--(3124)
        (1324)--(1342)	

        (2341)to [bend right, looseness=0.2] (4321)
        (4123)--(2143)	;


    \draw[line width=0.6ex, black, opacity = 0.3]
        (1234)--(1243)
        (2134)--(2143)
        (2341)--(2431)
        (3421)--(4321)
        (3412)--(4312)
        (1342)--(1432)

        (3241)--(4231)
        (4132)--(3142) ;

		
    \draw (2134) node[rectangle, draw, very thick] {};

    \end{scope}
    \foreach \x in {(2134), (2143), (2314),	(2341),	(2413), (2431), (3124),
        (3142),	(3214),	(3241),	(3412),	(3421),	(4123),	(4132),	(4213),
        (4231),	(4312),	(4321)} {\fill \x circle (0.1cm);}

    \foreach \x/\y in {(2134)/(2314), (2134)/(3124), (2134)/(2143),
    (2143)/(4123), (2143)/(2413), (3124)/(3142), (3124)/(3214),
    (2314)/(2341), (2314)/(3214),
    (4123)/(4132), (4123)/(4213), (2413)/(4213), (2413)/(2431),
    (3142)/(4132), (3142)/(3412), (2341)/(2431), (2341)/(3241),
    (3214)/(3241), (4132)/(4312), (4213)/(4231), (3412)/(4312),
    (3412)/(3421),  (2431)/(4231),
     (3241)/(3421), (3421)/(4321),
    (4312)/(4321), (4231)/(4321)}{
    \draw[ultra thick] \x -- \y;
  }

\end{tikzpicture}
\caption{$\Gamma(\Omega_{2134, (3,3,4,4)})$}
\end{subfigure}
\caption{Graphs $\Gwh{w}$ and $\Gamma(\Owh{w})$ for $h = (3,3,4,4)$ and $w=2134$}\label{figure_Graphs_2134}
\end{figure}
\end{example}


\end{document}